\theoremstyle{plain}
\newtheorem{thm}{Theorem}[section]
\newtheorem{thmIntr}{Theorem}
\newaliascnt{corIntr}{thmIntr}
\newtheorem{corIntr}[corIntr]{Corollary}
\newaliascnt{lem}{thm}
\newtheorem{lem}[lem]{Lemma}
\newaliascnt{cor}{thm}
\newtheorem{cor}[cor]{Corollary}
\newaliascnt{prop}{thm}
\newtheorem{prop}[prop]{Proposition}
\newaliascnt{quest}{thm}
\newtheorem{quest}[quest]{Question}
\newaliascnt{conj}{thm}
\theoremstyle{definition}
\newaliascnt{rem}{thm}
\newtheorem{rem}[rem]{Remark}
\newaliascnt{defn}{thm}
\newaliascnt{ex}{thm}
\numberwithin{equation}{section}
\def\bP{\ensuremath{\mathbb{P}}}
\def\bQ{\ensuremath{\mathbb{Q}}}
\def\bZ{\ensuremath{\mathbb{Z}}}
\def\bC{\ensuremath{\mathbb{C}}}
\def\cA{\ensuremath{\mathcal{A}}}
\def\cC{\ensuremath{\mathcal{C}}}
\def\cG{\ensuremath{\mathcal{G}}}
\def\cH{\ensuremath{\mathcal{H}}}
\def\cM{\ensuremath{\mathcal{M}}}
\def\cN{\ensuremath{\mathcal{N}}}
\def\cO{\ensuremath{\mathcal{O}}}
\def\cP{\ensuremath{\mathcal{P}}}
\def\cQ{\ensuremath{\mathcal{Q}}}
\def\cR{\ensuremath{\mathcal{R}}}
\def\cT{\ensuremath{\mathcal{T}}}
\def\cU{\ensuremath{\mathcal{U}}}
\newcommand{\cV}{\mathcal V}
\newcommand{\tC}{{\widetilde C}}
\def\tcR{\ensuremath{\widetilde{\mathcal{R}}}}
\def\tcP{\ensuremath{\widetilde{\mathcal{P}}}}
\def\ocM{\ensuremath{\overline{\mathcal{M}}}}
\def\ocR{\ensuremath{\overline{\mathcal{R}}}}
\def\ocP{\ensuremath{\overline{\mathcal{P}}}}
\def\ocA{\ensuremath{\overline{\mathcal{A}}}}
\def\RGtri{\ensuremath{\cR\cG^{\phantom{1,}1}_{4,3}}}
\def\Gtetra{\ensuremath{\cG^{\phantom{1,}1}_{3,4}}}
\def\Mtetra{\ensuremath{\cM^{\phantom{1,}1}_{7,4}}}
\def\GPetri{\ensuremath{\cG\cP^{\phantom{1,}1}_{6,4}}}
\def\Gtetrop{\ensuremath{{(\cG^{\phantom{1,}1}_{3,4})}^{ns}}}
\newcommand{\lra}{\longrightarrow}
\DeclareMathOperator{\Nm}{Nm}
\DeclareMathOperator{\Pic}{Pic}
\DeclareMathOperator{\Sing}{Sing}
\DeclareMathOperator{\Aut}{Aut}
\DeclareMathOperator{\Cliff}{Cliff}
\DeclareMathOperator{\NS}{NS}
\definecolor{applegreen}{rgb}{0.55, 0.71, 0.0}
\newcommand{\mylabel}[2]{#2\def\@currentlabel{#2}\label{#1}}
\newcommand{\set}[1]{\left\{#1\right\}}
\newcommand\restr[2]{{
 \left.\kern-\nulldelimiterspace 
 #1 
 \vphantom{\big|} 
 \right|_{#2} 
 }}
\title[Geometry of Prym semicanonical pencils]{Geometry of Prym semicanonical pencils and an application to cubic threefolds}
\author[M. Lahoz, J.C. Naranjo, and A. Rojas]{Martí Lahoz, Juan Carlos Naranjo, and Andr\'es Rojas}
\address{Departament de Matem\`atiques i Inform\`atica,
Universitat de Barcelona, Gran Via de les Corts Catalanes, 585, 08007 Barcelona, Spain}
\email{marti.lahoz@ub.edu}
\urladdr{\url{http://www.ub.edu/geomap/lahoz/}}
\email{jcnaranjo@ub.edu}
\urladdr{\url{http://webgrec.ub.edu/webpages/000006/ang/jcnaranjo.ub.edu.html}}
\curraddr{Mathematisches Institut, Universität Bonn, Endenicher
Allee 60, 53115 Bonn, Germany}
\email{arojas@math.uni-bonn.de} 
\begin{document}

\begin{abstract}
In the moduli space $\cR_g$ of double \'etale covers of curves of a fixed genus $g$, the locus formed by covers of curves with a semicanonical pencil consists of two irreducible divisors $\cT^e_g$ and $\cT^o_g$.
We study the Prym map on these divisors, which shows significant differences between them and has a rich geometry in the cases of low genus. In particular, the analysis of $\cT^o_5$ has enumerative consequences for lines on cubic threefolds.
\end{abstract}

\keywords{Prym varieties, theta-characteristics, cubic threefolds, Fano surfaces.}
\subjclass[2010]{
14H40, 14H45, 14H51, 14K10}

\thanks{M. L. was supported by a Ram\'on y Cajal fellowship and A. R. was partially supported by the Spanish MINECO grants MDM-2014-0445 and RYC-2015-19175. The three authors were partially supported by the Spanish MINECO research project PID2019-104047GB-I00.}

\maketitle

\setcounter{tocdepth}{1}

\section{Introduction}

Let $\cT_g \subset \mathcal M_g$ be the subset of (isomorphism classes of) complex, smooth, irreducible curves $C$ of genus $g\ge 3$ with a semicanonical pencil, that is, with a theta-characteristic $L\in\Pic^{g-1}(C)$ such that $h^0(C,L)$ is even and positive.
This divisor was studied in \cite{te}, where Teixidor proved its irreducibility and computed the class of its closure in the rational Picard group of the Deligne-Mumford compactification $\overline {\mathcal M}_g$. 

When pulled back to the moduli space of smooth Prym curves 
\[
\cR_g=\{(C,\eta) \mid [C]\in \mathcal M_g, \eta \in JC_2\setminus\{\cO_C\}\}/\cong
\]
via the forgetful map, it breaks up into two divisors according to the parity (as a theta-characteristic) of the semicanonical pencil tensored with the $2$-torsion line bundle defining the Prym curve.
We denote by $\cT^e_g$ and $\cT^o_g$ the divisors of even and odd Prym semicanonical pencils respectively (for simplicity, we use the same notation for the divisors in $\cR_g$ and for their closures in the Deligne-Mumford compactification $\ocR_g$).

Recently, Maestro and the third author have proved in \cite{maroj} that $\cT^e_g$ and $\cT^o_g$ are irreducible for $g\neq 4$, and have computed their classes in the rational Picard group of $\ocR_g$.
The irreducibility for $g=4$ will be obtained as a consequence of our results.

The main goal of this paper is to investigate the geometry of $\cT^e_g$ and $\cT^o_g$, specially in relation to the Prym map $\cP_g:\cR_g \lra \cA_{g-1}$ and its Beauville extension $\tcP_g$ to a proper map. 
The Prym map has well-known generic fibers for $g\le 6$ and is generically injective for $g\ge 7$; nonetheless, its restriction to divisors is often far from being understood.
For instance, the restriction of $\widetilde{\cP}_g$ to the divisor of Beauville admissible covers of nodal curves has recently received attention, since its study is equivalent to that of the so-called ramified Prym map (see \cite{mp} and \cite{no}).

The problem we are facing can be found in the literature for genus $5$ in two very remarkable works.
On the one hand, the even case $\mathcal T^e_5$ was considered by Beauville in \cite{be_invent}, where he proved that $\cT^e_5$ is irreducible and equals the preimage of $\theta_{null}\subset\cA_4$ via $\tcP_5$.
On the other hand, Izadi proved that $\cT^o_5$ dominates $\cA_4$ (see \cite[Proof of Theorem~6.14]{iz}, where $\cT^o_5$ is denoted by $\theta_{null \, 2}$).
We extend this analysis to other values of $g$, and in the case of $\cT^o_5$ we provide a more detailed description with unexpected connections to the geometry of cubic threefolds.

Let us first consider the even cases $\cT^e_g$.
According to Mumford's description \cite{mu} of the singularities of the theta divisor of a Prym variety and \cite{YanoAnn}, it is well known that $\cP_g$ maps $\cT^e_g$ to the divisor $\theta_{null}\subset\cA_{g-1}$ of principally polarized abelian varieties whose theta divisor contains a singular $2$-torsion point of even multiplicity.
Combining this with results of Teixidor on the loci of curves with unexpected theta-characteristics (\cite{te2}), one proves item \eqref{mainA:preim} and part of \eqref{mainA:gg6} in the following theorem, whereas \eqref{mainA:fiber} is essentially a consequence of Recillas' trigonal construction:

\begin{thmIntr}\label{mainA}
The divisors $\cT^e_g$ of even semicanonical pencils satisfy:
\begin{enumerate}[{\rm (1)}]
\item\label{mainA:preim} $\cT^e_g=\cP_g^{-1}(\theta_{null})$ for $g\geq 3$.
\item\label{mainA:fiber} The fiber of $\cP_4$ on a general hyperelliptic Jacobian $JX\in \cA_3$ is birationally equivalent to its Kummer variety.
In particular, the divisor $\cT^e_4$ is irreducible.
\item\label{mainA:gg6}
For $g\geq6$, the restricted Prym map $\cP_g\,|_{\cT_g^e}$ is generically finite onto its image.
In particular, $\deg(\cP_6\,|_{\cT_6^e})=27$.
\end{enumerate}
\end{thmIntr}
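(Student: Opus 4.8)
The plan is to handle the three items separately, using in each case the geometric description of $\cT^e_g$ as a locus over which the Prym theta divisor acquires an even singular $2$-torsion point.

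For item \eqref{mainA:preim}, I would combine Mumford's analysis of the singularities of Prym theta divisors with the parity considerations recalled before the statement. Given $(C,\eta)\in\cR_g$ with a semicanonical pencil $L$ such that $L\otimes\eta$ is even, the point corresponding to $L$ (or to $L\otimes\eta$, according to the standard normalization) is a $2$-torsion point of the Prym variety $P(C,\eta)$ lying on its theta divisor $\Xi$, and Mumford's computation identifies its multiplicity with $\tfrac12 h^0(C,L)+$ (parity correction), which is even under our hypothesis; hence $P(C,\eta)\in\theta_{null}$ and $\cT^e_g\subseteq\cP_g^{-1}(\theta_{null})$. For the reverse inclusion I would invoke the result of \cite{YanoAnn} (together with Teixidor's results \cite{te2} on loci of curves with unexpected theta-characteristics) guaranteeing that, for a general point of any component of $\cP_g^{-1}(\theta_{null})$, the even singular $2$-torsion point of $\Xi$ is ``accounted for'' by a semicanonical pencil on $C$ and not by an exceptional phenomenon; a dimension count via Teixidor's class computation in $\overline{\cM}_g$ then shows no other component can occur. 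The delicate point here is to make sure the parity bookkeeping (which of $L$, $L\otimes\eta$ carries the even $h^0$, and how Mumford's multiplicity formula interacts with the shift by $\eta$) is consistent across all $g\geq 3$, including boundary behaviour.

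For item \eqref{mainA:fiber}, I would use Recillas' trigonal construction in its classical form: a point of $\cA_3$ which is a hyperelliptic Jacobian $JX$ is the Prym variety of a double cover $(C,\eta)$ exactly when $C$ admits a $g^1_3$ whose associated tower recovers $JX$. For general hyperelliptic $X$ of genus $3$, the curves $C$ with $\cP_4(C,\eta)=JX$ are the trigonal curves of genus $4$ arising this way, and the trigonal construction exhibits the fiber $\cP_4^{-1}(JX)$ birationally as a space parametrizing certain effective divisors / line bundles on $X$ — concretely, one matches it with $\mathrm{Sym}^{?}$ data that collapses to the Kummer variety $JX/\pm 1$. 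I would then observe that $\cT^e_4$ dominates $\theta_{null}\subset\cA_3$ (by \eqref{mainA:preim}), that $\theta_{null}$ in dimension $3$ is exactly the hyperelliptic locus, and that since the generic fiber is irreducible (being birational to a Kummer variety) and $\theta_{null}$ is irreducible, $\cT^e_4$ is irreducible.

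For item \eqref{mainA:gg6}, I would argue by a dimension count: $\dim\cT^e_g=\dim\cR_g-1=\tfrac12(3g-2)\cdot 2\ldots$ — more precisely $\dim\cR_g-1=3g-4$, while $\dim\theta_{null}=\dim\cA_{g-1}-1=\tfrac{g(g-1)}{2}-1$, and for $g\geq 6$ one has $3g-4\leq \tfrac{g(g-1)}{2}-1$, so the restricted map cannot have positive-dimensional general fiber purely for dimension reasons once one checks it is dominant onto (a component of) $\theta_{null}$ of the expected dimension; generic finiteness then follows from \eqref{mainA:preim} together with the fact that $\theta_{null}$ is irreducible of that dimension (so $\cP_g|_{\cT^e_g}$ is dominant onto it). The remaining assertion $\deg(\cP_6|_{\cT^e_6})=27$ I would obtain by specialization: restrict to a general point of $\theta_{null}\subset\cA_5$, identify it via Donagi--Smith / the theory of the Prym map in genus $6$ with the intermediate Jacobian picture, and count the $27$ preimages as corresponding to the $27$ lines on a cubic surface (equivalently, the $27$ even theta-characteristics realizing the node), matching the known monodromy of the genus-$6$ Prym map. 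The main obstacle in the whole argument is item \eqref{mainA:fiber}: making the trigonal construction produce a clean birational identification of the fiber with the Kummer variety requires care about the hyperelliptic specialization (where the trigonal construction degenerates) and about which boundary/reducible loci must be excluded to get a genuine birational statement rather than a mere generic finiteness.
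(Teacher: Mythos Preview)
Your outline for items \eqref{mainA:preim} and \eqref{mainA:fiber} is essentially aligned with the paper, though vague in places. For \eqref{mainA:preim} the paper does exactly the Mumford/parity analysis you sketch, and then uses Teixidor's codimension estimate to rule out the residual locus (curves with an odd theta-characteristic $L$ satisfying $h^0(L)\ge 3$ and $L\otimes\eta$ odd). For \eqref{mainA:fiber} the paper also uses Recillas' construction; the key point you leave unnamed is \emph{why} the Kummer appears: for hyperelliptic $X$ the hyperelliptic involution acts on $\Pic^4(X)$ as $L\mapsto \omega_X^2\otimes L^{-1}$, so in the moduli space $\Gtetra$ the fiber over $X$ is already the quotient $\Pic^4(X)/\langle\pm1\rangle$, i.e.\ birationally the Kummer of $JX$.

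Your argument for item \eqref{mainA:gg6}, however, has a genuine gap. The dimension inequality $3g-4\le \tfrac{g(g-1)}{2}-1$ is strict for $g\ge 7$, and this \emph{prevents} $\cP_g|_{\cT^e_g}$ from being dominant onto $\theta_{null}$, rather than implying it: for $g\ge 7$ one has $\dim\cR_g<\dim\cA_{g-1}$, so $\cP_g$ is not surjective and item \eqref{mainA:preim} gives no information about the image of $\cT^e_g$ inside $\theta_{null}$. A dimension count alone says nothing about the fiber dimension of a non-dominant map; $\cT^e_g$ could a priori sit entirely inside the locus where the Prym map has positive-dimensional fibers. The paper's approach is different and actually addresses this: one shows the differential $d\cP_g$ is injective at a general point of $\cT^e_g$. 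For $g\ge 8$ this is automatic, since $d\cP_g$ fails only along covers of curves with $\Cliff(C)\le 2$, a locus of codimension $\ge 2$. For $g=7$ and $g=6$ this locus is a divisor (the tetragonal locus, resp.\ the ramification divisor $\cU_{6,0}$), and the paper rules out $\cT^e_g$ coinciding with it by comparing cohomology classes in $\Pic(\ocM_7)_\bQ$ and $\Pic(\ocR_6)_\bQ$. Only for $g=6$ does your dominance argument work (since there $\cP_6$ is surjective), and even then establishing $\deg(\cP_6|_{\cT^e_6})=27$ still requires knowing that $\cT^e_6\neq\cU_{6,0}$, i.e.\ that $\theta_{null}$ is not contained in the branch locus; the paper gets this from the class comparison, not from the cubic-surface picture you invoke.
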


On the other hand, the behavior of the divisors $\cT^o_g$ of odd semicanonical pencils differs considerably from that of the even cases.
Indeed, for low values of $g$ (as long as $\dim \cT^o_g\geq\dim\cA_{g-1}$), $\cT^o_g$ dominates $\cA_{g-1}$. 

Our results for the divisors $\cT^o_g$ are summarized in the following theorem.
For the case $g=5$, let us recall that Donagi \cite{do_fibres} established a birational map between $\cA_4$ and the set $\cR\cC^+$ of pairs $(V,\delta)$, where $V\subset\bP^4$ is a smooth cubic threefold and $\delta\in JV_2$ is a $2$-torsion point of its intermediate Jacobian with a certain parity condition. We will denote by $F(V)$ the Fano surface of lines contained in $V$.

\begin{thmIntr}\label{mainB}
The divisors $\cT^o_g$ of odd semicanonical pencils satisfy:
\begin{enumerate}[{\rm (1)}]
\item\label{mainB:g3} The map $\cP_3\, \vert_{\cT^o_3}:\cT^o_3 \lra \cA_2$ is dominant, and its general fiber is isomorphic to the complement in the projective plane of six lines and a smooth conic.
In particular, $\cT^o_3$ is rationally connected.
\item\label{mainB:g4} The map $\cP_4\, \vert_{\cT^o_4}:\cT^o_4 \lra \cA_3$ is surjective, and the fiber of a general Jacobian $JX$ is the complement in the projective plane of the union of the canonical model of $X$ and its $28$ bitangent lines.
Moreover, $\cT_4^o$ is irreducible and rationally connected. 
\item\label{mainB:g5} {\rm (Izadi)} The restricted Prym map $\tcP_5\,|_{\cT_5^o}$ is dominant, and the fiber at a general $(V,\delta)\in\cR\cC^+$ is a partial desingularization of the curve
$\Gamma\subset F(V)$ defined by
\[
\Gamma=\set{l\in F(V)\mid \text{There exist a $2$-plane $\pi$ and a line $r\in F(V)$ with $V\cdot\pi=l+2r$}}.
\]
\item\label{mainB:gg6} For every $g\geq6$ the restricted Prym map $\cP_g\,|_{\cT_g^o}$ is generically finite onto its image.
\end{enumerate}
\end{thmIntr}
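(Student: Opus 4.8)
The statement is infinitesimal, so the plan is to prove that the differential of $\cP_g\vert_{\cT^o_g}$ is injective at a general point $(C,\eta)\in\cT^o_g$ (recall that $\cT^o_g$ is irreducible for $g\geq 6$ by \cite{maroj}). Two ingredients enter. First, the codifferential of $\cP_g$ at $(C,\eta)$ is the multiplication map
\[
\mu\colon\ \mathrm{Sym}^2 H^0(C,K_C\otimes\eta)\ \longrightarrow\ H^0(C,K_C^{\otimes 2}),
\]
under the canonical identification of $H^0(C,K_C\otimes\eta)$ with the space of $1$-forms on the Prym variety $P=\cP_g(C,\eta)$; hence $\ker d\cP_g$ is the annihilator of $\mathrm{Im}\,\mu$ for the Serre pairing $H^1(C,T_C)\times H^0(C,K_C^{\otimes 2})\to\bC$. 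Second, at a general $(C,\eta)\in\cT^o_g$ the semicanonical pencil $L$ is unique, has $h^0(C,L)=2$, and the conormal line of $\cT^o_g\subset\cR_g$ at $(C,\eta)$ is spanned by the Wronskian $W\in H^0(C,K_C\otimes L^{\otimes 2})=H^0(C,K_C^{\otimes 2})$ of a basis of $H^0(C,L)$, i.e.\ by the quadratic differential whose zeros are the ramification points of $\varphi_L\colon C\to\bP^1$ (this is the infinitesimal form of the conormal-bundle computation behind the class of $\cT^o_g$ in \cite{maroj}). Combining the two, $d(\cP_g\vert_{\cT^o_g})$ is injective at $(C,\eta)$ if and only if $\mathrm{Im}\,\mu+\langle W\rangle=H^0(C,K_C^{\otimes 2})$.

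Since $\dim\mathrm{Sym}^2 H^0(C,K_C\otimes\eta)=\binom{g}{2}\geq 3g-3=\dim H^0(C,K_C^{\otimes 2})$ precisely for $g\geq 6$, and $\mu$ is surjective at a general point of $\cR_g$ (this being the generic finiteness of $\cP_g$ in that range), what remains is to show that $\cT^o_g$ is not contained in the locus where $\mathrm{Im}\,\mu+\langle W\rangle$ is a proper subspace; as this is an open condition on $\cT^o_g$, it suffices to exhibit one pair $(C,\eta)\in\cT^o_g$ realising the equality. It is useful to record the geometry involved: since $h^0(C,L\otimes\eta)=1$ generically, writing $D$ for the unique divisor in $|L\otimes\eta|$ we get $K_C\otimes\eta\cong L(D)$, so the Prym-canonical image $X=\varphi_{K_C\otimes\eta}(C)\subset\bP^{g-2}$ carries the distinguished multisecant $\Lambda=\langle\varphi_{K_C\otimes\eta}(D)\rangle$, projection from which recovers $\varphi_L$; one identifies $\mathrm{coker}\,\mu$ with $H^1\!\big(\bP^{g-2},\mathcal{I}_X(2)\big)$, the failure of $2$-normality of $X$, while $W$ corresponds, up to $H^0(C,K_C^{\otimes 2})\cong H^0(X,\mathcal{O}_X(2))$, to the ramification datum of the projection from $\Lambda$. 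Thus the identity to verify reads: the quadrics through the Prym-canonical model leave at most a $1$-dimensional gap, which is filled by $W$.

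To produce the required point I would degenerate inside $\overline{\cT^o_g}$. The natural choice is to specialise $(C,\eta)$ to a general member of the boundary divisor of $\ocR_g$ of Beauville-admissible covers of irreducible nodal curves, say $C_0=Y/(p\sim q)$ with $Y$ smooth of genus $g-1$, on which — as recalled in the introduction — $\tcP_g$ restricts to the ramified Prym map of double covers of genus-$(g-1)$ curves branched at two points. One checks that $\overline{\cT^o_g}$ meets this divisor along (the closure of) the corresponding divisor of odd ``ramified semicanonical pencils'', and the desired identity becomes a statement on a genus-$(g-1)$ curve with two marked points; for $g\geq 7$ this reduces inductively to genus $g-1$, while the base case $g=6$ is handled by a direct computation of $H^0(C,K_C\otimes\eta)$, $\mu$ and $W$ on an explicit genus-$6$ curve carrying a semicanonical pencil (for instance in one of the families constructed in \cite{te}). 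In each step the identity propagates from the chosen (boundary) point to a general point of $\cT^o_g$, because the set of points of the irreducible variety $\overline{\cT^o_g}$ over which $\tcP_g$ has a positive-dimensional fibre is closed and, by the computation above, does not contain that boundary point.

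The step I expect to be the main obstacle is the last clause of the criterion: if $\mu$ were already surjective at a general point of $\cT^o_g$ we would be done at once, but this need not happen, and then one must show that $\mathrm{coker}\,\mu$ is exactly one-dimensional and that $W\notin\mathrm{Im}\,\mu$ — equivalently, that the kernel direction of $d\cP_g$ along $\cT^o_g$ is transverse to the divisor $\cT^o_g$. Geometrically this asserts that the ramification of the semicanonical pencil is \emph{not} cut out on the Prym-canonical model by a quadric, and proving it is where one must exploit the specific interaction between the pencil $|L|$ and the section of $|L\otimes\eta|$; pinning down the conormal direction precisely and then checking this non-containment (first on the explicit genus-$6$ curve, then along the inductive degeneration) is the delicate part.
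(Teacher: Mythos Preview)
Your criterion is correct: $d(\cP_g\vert_{\cT^o_g})$ is injective at $(C,\eta)$ precisely when $\mathrm{Im}\,\mu+\langle W\rangle=H^0(C,K_C^{\otimes 2})$. But you are making the problem much harder than it is. The paper dispenses with $W$ altogether by showing that $\mu$ is already surjective at a general point of $\cT^o_g$; your remark ``if $\mu$ were already surjective at a general point of $\cT^o_g$ we would be done at once, but this need not happen'' is right in its first clause and wrong in its second.

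The paper's argument is short and avoids any degeneration. By \cite{n}, a point $(C,\eta)\in\cR_g$ at which $d\cP_g$ fails to be injective must have $\Cliff(C)\leq 2$, hence (Martens) $C$ is tetragonal or a plane sextic. For $g\geq 8$ this locus has codimension $\geq 2$ in $\cM_g$ and so cannot contain the image of the divisor $\cT^o_g$. For $g=7$ the tetragonal locus $\Mtetra$ is an irreducible Brill--Noether divisor whose class in $\Pic(\ocM_7)_\bQ$ (known up to scalar by Eisenbud--Harris) is not proportional to Teixidor's class $[\cT_7]$, so $\cT^o_7\not\subset\pi^{-1}(\Mtetra)$. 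For $g=6$ the ramification divisor $\cU_{6,0}$ of $\cP_6$ is irreducible, and its class in $\Pic(\ocR_6)_\bQ$ (computed in \cite{fa-lu}) is not proportional to the class $[\cT^o_6]$ of \autoref{class}. In each case a general $(C,\eta)\in\cT^o_g$ lies outside the ramification locus of $\cP_g$, so $\mu$ is surjective there and generic finiteness follows at once.

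Your degeneration/induction programme is therefore unnecessary, and as written it is only a sketch: neither the explicit base computation at $g=6$ nor the inductive step at the boundary is carried out, and the transversality $W\notin\mathrm{Im}\,\mu$ that you flag as the ``delicate part'' is a difficulty you never actually need to confront.
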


Observe that after \eqref{mainB:gg6} two natural questions arise: the computation of the degree of $\cP_g\,|_{\cT_g^o}$ (for $g\ge 6$ but specially for $g=6$) and an intrinsic description of the divisor $\cP_6(\cT_6^o)$ in $\cA_5$.
With respect to the first question, we are only able to prove that the degree of $\cP_6\,|_{\cT_6^o}$ is strictly smaller than $27$.
Moreover, in Section~\ref{sec:genus6imes} we propose a possible natural geometric description for $\cP_6(\cT_6^o)$.

The main tool for our reproof of \eqref{mainB:g5} is the class of $\cT^o_5$ in $\Pic(\ocR_5)_\bQ$.
Simultaneously, this approach reveals interesting properties of a general cubic threefold $V\subset\bP^4$, which a priori seem difficult to detect via more direct techniques.
Indeed, Donagi's description of the general fiber of $\tcP_5$ (\cite{do_fibres}) realizes (a double cover of) the Fano surface of lines $F(V)$ as a subvariety of $\ocR_5$, where the rational Picard group and the canonical class are well understood.
This enables us to prove:

\begin{thmIntr}\label{mainC}
For every smooth cubic threefold $V\subset\bP^4$, the curve $\Gamma\subset F(V)$ is numerically equivalent to $8K_{F(V)}$.
Furthermore, for $V$ general, $\Gamma$ is irreducible and its singular locus consists of $1485$ nodes.
\end{thmIntr}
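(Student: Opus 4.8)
The plan is to compute $[\Gamma]$ by transporting it to the moduli space $\ocR_5$, where the rational Picard group and the class of $\cT^o_5$ are under control, and then to read off the invariants of $\Gamma$ by adjunction on $F(V)$. By Donagi's description of the fibres of $\tcP_5$ (\cite{do_fibres}), for $(V,\delta)$ general in $\cR\cC^+$ the fibre $\tcP_5^{-1}(JV)$ is birational to a double cover $\widetilde F(V)$ of $F(V)$, the map to $F(V)$ being induced by $(C_l,\eta_l)\mapsto l$, where $C_l\subset\bP^2$ is the discriminant quintic of the conic bundle obtained by projecting $V$ away from $l$ and $\eta_l$ the $2$-torsion class carried by that conic bundle. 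Under this identification $\tcP_5|_{\cT^o_5}$ cuts out on the fibre exactly the partial desingularization of $\Gamma$ from \autoref{mainB}\eqref{mainB:g5}, which maps birationally onto $\Gamma\subset F(V)$, so $[\Gamma]$ is the pushforward to $F(V)$ of the restriction of $[\cT^o_5]\in\Pic(\ocR_5)_{\bQ}$. Since $\tcP_5$ is constant on the fibre, every class pulled back from $\ocA_4$ restricts there to $0$; feeding in the known expression for $[\cT^o_5]$ (\cite{maroj}) together with the standard relation between $\tcP_5^{*}\lambda_{\cA}$, the Hodge class $\lambda$ of $\ocR_5$ and the boundary divisors of $\ocR_5$, the surviving contributions restrict on $F(V)$ to a combination of: the Hodge class, which by the Clemens-Griffiths description of the Hodge structure restricts to a rational multiple of $K_{F(V)}$ (the Pl\"ucker polarization, $K_{F(V)}^{2}=45$); and the boundary divisors, which are supported along the degeneration locus of the conic bundle, essentially the curve $C_2\subset F(V)$ of lines of the second type, of class $[C_2]\equiv 2K_{F(V)}$ (again by monodromy, or by a direct Chern-class computation). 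Collecting coefficients yields $[\Gamma]\equiv 8K_{F(V)}$ for general $V$; the statement for every smooth $V$ follows because $[\Gamma_V]$ is a flat section of $R^{2}f_{*}\bQ$ over the connected parameter space of smooth cubic threefolds, hence coincides with the flat section $8\,[K_{F(V)}]$ once they agree at one point --- equivalently, the monodromy-invariant part of $H^{2}(F(V),\bQ)\cong\wedge^{2}H^{1}(F(V),\bQ)$ is the line spanned by the image of the symplectic form, i.e.\ by $K_{F(V)}$.

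For $V$ general I would then argue as follows. The curve $C_2$ of lines of the second type is smooth and irreducible, and $\Gamma$ is its image under the morphism $r\mapsto l_r$ sending $r$ to the line $l_r$ determined by $V\cdot\pi_r=2r+l_r$, where $\pi_r$ is the unique $2$-plane tangent to $V$ along $r$; this morphism is generically injective, so $\Gamma$ is irreducible with normalization $C_2$. Adjunction in $F(V)$ gives
\[
p_a(\Gamma)=1+\tfrac{1}{2}\bigl(\Gamma^{2}+\Gamma\cdot K_{F(V)}\bigr)=1+\tfrac{1}{2}(64+8)\cdot 45=1621,
\]
while $g(C_2)=1+\tfrac{1}{2}\bigl(C_2^{2}+C_2\cdot K_{F(V)}\bigr)=1+\tfrac{1}{2}(4+2)\cdot 45=136$. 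It then remains to check that every singular point of $\Gamma$ is an ordinary node: these are the lines $l$ lying on at least two branches of $\Gamma$, that is, with $l=l_{r_1}=l_{r_2}$ for distinct second-type lines $r_1\neq r_2$, and a local analysis of $r\mapsto l_r$ --- ruling out, for general $V$, ramification points, lines with three or more preimages, and non-transverse double points --- shows each of them is a node. Hence $\Sing\Gamma$ consists of exactly $p_a(\Gamma)-g(C_2)=1621-136=1485$ nodes.

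The main obstacle is precisely this last step: proving that for general $V$ the map $C_2\to F(V)$, $r\mapsto l_r$, is an unramified, generically injective immersion whose only multiple points are transverse double points. Concretely one must study the incidence $\set{(l,r)\in F(V)\times F(V)\mid V\cdot\pi=l+2r\ \text{for some $2$-plane }\pi}$ and perform a dimension count over the moduli of cubic threefolds to exclude cusps, tacnodes and triple points --- most likely by exhibiting one cubic threefold where nodality is transparent and invoking semicontinuity. A secondary, bookkeeping-type difficulty is to make the identification in the first paragraph scheme-theoretically precise, so that $\tcP_5|_{\cT^o_5}$ is verified to meet the fibre with multiplicity one along the partial desingularization of $\Gamma$ and no spurious multiplicities creep in when transporting $[\cT^o_5]$ to $[\Gamma]$.
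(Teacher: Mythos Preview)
Your overall strategy---restrict $[\cT^o_5]$ to the general fibre $\widetilde{F(V)}\hookrightarrow\ocR_5$ of $\tcP_5$ and read off $[\Gamma]$ on $F(V)$---is the paper's strategy, and your adjunction computations $p_a(\Gamma)=1621$, $g(\Gamma')=136$ are exactly what the paper does once $\Gamma\equiv 8K_{F(V)}$ is known. But the first paragraph contains a genuine error and leaves out the two computations that actually pin down the class.

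The error: you say the boundary divisors restrict on $F(V)$ to the curve of lines of the second type $\Gamma'$ (your $C_2$), of class $2K_{F(V)}$. That is false. The only boundary class surviving on the general fibre is $\delta_0^{ram}$, and $\Delta_0^{ram}\cap\widetilde{F(V)}$ is the locus of non-admissible covers in the fibre, i.e.\ lines $l$ whose discriminant quintic $Q_l$ is singular; this maps to $\Gamma$, not to $\Gamma'$. So your proposed expression of $[\Gamma]$ in terms of $\iota^*\lambda$ and $[\Gamma']$ is circular: you would be computing $[\Gamma]$ in terms of itself. The paper breaks the circularity in two steps you do not mention. First, $\cT^e_5$ is contained in $\cP_5^{-1}(\theta_{null})$, hence misses the general fibre, so $0=\iota^*[\cT^e_5]=68\iota^*\lambda-17\iota^*\delta_0^{ram}$ forces $\iota^*\delta_0^{ram}=4\iota^*\lambda$; plugging into $[\cT^o_5]$ gives $\iota^*[\cT^o_5]=4\iota^*\lambda$, and since $\widetilde\Gamma$ has the two exchanged components $\cT^o_5\cap\widetilde{F(V)}$ and $\Delta_0^{ram}\cap\widetilde{F(V)}$, one gets $\varphi^*\Gamma=8\iota^*\lambda$. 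Second, identifying $\iota^*\lambda$ with $K_{F(V)}$ is not a ``Clemens--Griffiths'' fact but adjunction: $\widetilde{F(V)}$ is the general fibre of $\ocR_5\dasharrow\ocA_4$, so $K_{\widetilde{F(V)}}=\iota^*K_{\ocR_5}$, and the Farkas--Ludwig formula $K_{\ocR_5}=13\lambda-2(\delta_0'+\delta_0'')-3\delta_0^{ram}-\ldots$ combined with the relation just obtained gives $K_{\widetilde{F(V)}}=\iota^*\lambda$. Hurwitz then yields $\iota^*\lambda=\varphi^*K_{F(V)}$, hence $\varphi^*\Gamma=8\varphi^*K_{F(V)}$ and $\Gamma\equiv 8K_{F(V)}$.

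For the nodality, your outline (birationality of $\Gamma'\to\Gamma$, then exclude ramification, triple points, and non-transverse double points) is what the paper does; it carries out the last step by an explicit tangent-space computation in local coordinates following Murre, rather than by finding a single good example. Your proposed shortcut via one example plus semicontinuity is in principle viable (``only nodes'' is open), but exhibiting such a cubic threefold directly is not obviously easier than the local analysis.
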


To the best of the authors' knowledge, the numerical class of $\Gamma$ has never been computed. 
On the other hand, its natural counterpart
\[
\Gamma'=\set{r\in F(V)\mid \text{There exist a $2$-plane $\pi$ and a line $l\in F(V)$ with $V\cdot\pi=l+2r$}}
\]
(namely the curve formed by \emph{lines of the second type}) had been greatly studied in the literature (see e.g. \cite[Section~10]{cg}).
\autoref{mainC} has immediate consequences for the enumerative geometry of lines on a cubic threefold.
For instance, the geometric interpretation of the nodes of $\Gamma$ and the intersection points of $\Gamma$ with $\Gamma'$ establishes the following result:

\begin{corIntr}\label{mainD}
For a general smooth cubic threefold $V\subset\bP^4$, the following statements hold:
\begin{enumerate}[{\rm (1)}]
\item\label{mainD:nodes} There are exactly $1485$ lines $l\subset V$ for which there exist $2$-planes $\pi_1,\pi_2\subset\bP^4$ and lines $r_1,r_2\subset V$ satisfying $V\cdot\pi_i=l+2r_i$ $(i=1,2)$.
\item\label{mainD:inters} There are exactly 720 lines $l\subset V$ for which there exist $2$-planes $\pi_1,\pi_2\subset\bP^4$ and lines $r_1,r_2\subset V$ satisfying $V\cdot\pi_1=l+2r_1$ and $V\cdot\pi_2=2l+r_2$.
\end{enumerate}
\end{corIntr}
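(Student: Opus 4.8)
The plan is to recognize each of the two loci as a natural subvariety of the Fano surface $F(V)$: the first as the singular locus of $\Gamma$, the second as the intersection $\Gamma\cap\Gamma'$. Once this dictionary is set up, \eqref{mainD:nodes} follows at once from \autoref{mainC}, and \eqref{mainD:inters} follows by computing an intersection number on $F(V)$ with the help of \autoref{mainC} and the classical description of $\Gamma'$.

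\emph{Proof of \eqref{mainD:nodes}.} The statement is to be read as asking for \emph{two distinct} completions $(\pi_1,r_1)\neq(\pi_2,r_2)$ (with $\pi_1=\pi_2$ one would necessarily have $r_1=r_2$). Recall the classical description of $\Gamma$ through lines of the second type: a plane $\pi$ through a line $r\subset V$ satisfies $V\cdot\pi\geq 2r$ if and only if $r$ is of the second type, in which case, for $V$ general, $\pi$ is unique and $V\cdot\pi=l_r+2r$ for a residual line $l_r\neq r$ (no plane cuts $V$ in a triple line). Hence the completions $(\pi,r)$ of a line $l$ are in bijection with the points of $\Gamma'$ lying over $l$ under the morphism $\nu\colon\Gamma'\to F(V)$, $r\mapsto l_r$, whose image is exactly $\Gamma$. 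For $V$ general $\Gamma'$ is smooth and $\nu$ is birational onto $\Gamma$, so $\nu$ is the normalization of $\Gamma$; by \autoref{mainC} it is then an immersion whose only double points are the $1485$ nodes of $\Gamma$, each being the image of exactly two points of $\Gamma'$. Thus the lines admitting two distinct completions are precisely these $1485$ nodes. (One could equally run this argument with the partial desingularization of $\Gamma$ furnished by \autoref{mainB}\eqref{mainB:g5} in place of $\Gamma'$.)

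\emph{Proof of \eqref{mainD:inters}.} A line $l\subset V$ satisfies the stated condition exactly when, on the one hand, there is a plane $\pi_1$ with $V\cdot\pi_1=l+2r_1$ — that is, $l\in\Gamma$ — and, on the other hand, there is a plane $\pi_2$ with $V\cdot\pi_2=2l+r_2$ — that is, by the very definition of $\Gamma'$, the line $l$ is of the second type, i.e. $l\in\Gamma'$. So the locus in question is set-theoretically $\Gamma\cap\Gamma'$. Since $\Gamma$ and $\Gamma'$ are distinct irreducible curves — their numerical classes $8K_{F(V)}$ and $2K_{F(V)}$ are unequal because $K_{F(V)}$ is non-torsion — this is a finite set, and for $V$ general $\Gamma$ and $\Gamma'$ are smooth along it and meet transversally, so its cardinality equals $\Gamma\cdot\Gamma'$. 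Using $\Gamma\equiv 8K_{F(V)}$ from \autoref{mainC}, the classical identity $\Gamma'\equiv 2K_{F(V)}$ (see \cite{cg}) and $K_{F(V)}^2=45$, we obtain
\[
\#(\Gamma\cap\Gamma')=\Gamma\cdot\Gamma'=16\,K_{F(V)}^2=720.
\]

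The enumerative output is thus a formal consequence of the set-up; the real content lies in the genericity inputs. The most delicate is the behaviour of the parametrization $\nu\colon\Gamma'\to F(V)$ for $V$ general: that $\Gamma'$ is smooth, that $\nu$ is birational onto $\Gamma$, and that the $1485$ singular points of $\Gamma$ predicted by \autoref{mainC} are genuine ordinary double points, each realized by two distinct completions — which is essentially what the proof of \autoref{mainC} establishes. The remaining points (no plane cuts $V$ in a triple line; $\Gamma$ and $\Gamma'$ meet transversally off $\Sing\Gamma\cup\Sing\Gamma'$) are closed conditions on $V$ and can be dispatched either by a direct dimension count or by checking a single explicit cubic threefold and invoking semicontinuity.
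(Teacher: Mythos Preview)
Your proposal is correct and follows essentially the same route as the paper: for \eqref{mainD:nodes} you identify the lines in question with the nodes of $\Gamma$ and invoke \autoref{mainC}, and for \eqref{mainD:inters} you identify them with $\Gamma\cap\Gamma'$ and compute the intersection number $16\,K_{F(V)}^2=720$. The paper proceeds in exactly this way, with the genericity inputs (smoothness of $\Gamma'$, birationality of $\Gamma'\to\Gamma$, nodality of $\Gamma$) supplied by \autoref{gamma}.
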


\textbf{Structure of the paper.} After some preliminaries on the divisors $\cT^e_g$ and $\cT^o_g$ and on Prym varieties, in Section~\ref{sec:Te} we determine $\cP_g^{-1}\big(\theta_{null}\big)$, which proves \autoref{mainA}.\eqref{mainA:preim}.
The rest of the paper essentially deals with the odd cases, and the study of each genus occupies a section.

Section~\ref{sec:genus3} is devoted to \autoref{mainB}.\eqref{mainB:g3}, whose proof is based on Mumford's results on Prym varieties of covers of hyperelliptic curves.
In Section~\ref{sec:genus4} we study the case of genus 4. Using involutions on certain moduli spaces, we prove that $\cT^o_4$ corresponds under Recillas' trigonal construction to smooth genus 3 curves endowed with a non-complete $g^1_4$ linear series, which gives the arguments for proving \autoref{mainB}.\eqref{mainB:g4}.
In this section, we also prove \autoref{mainA}.\eqref{mainA:fiber}.

Section~\ref{sec:genus5} addresses the case of genus $5$.
As explained above, we prove \autoref{mainB}.\eqref{mainB:g5} using a cohomological approach.
A more detailed analysis also gives \autoref{mainC} and several enumerative consequences, including \autoref{mainD} as well as a more precise description of the desingularization appearing in \autoref{mainB}.\eqref{mainB:g5} (see \autoref{DesingPrecisa}). 
Finally, in Section~\ref{sec:genus6imes} we study the cases of genus $g\geq6$.
After proving \autoref{mainB}.\eqref{mainB:gg6} and \autoref{mainA}.\eqref{mainA:gg6}, we propose a natural geometric description for the divisor $\cP_6(\cT_6^o)\subset \cA_5$, based on a close relation between $\cT_g^o$ and the locus of Prym curves $(C,\eta)$ for which the Brill-Noether-Prym locus $V^2(C,\eta)$ is singular.

\vspace{2mm}

\textbf{Acknowledgements.} We are pleased to thank Alessandro Verra for his useful comments at the beginning of this project.
We are also grateful to Daniel Huybrechts for pointing out an inaccuracy in a previous version, and to the referees for their careful reading of the paper.

\section{Preliminaries}\label{sec:prelim}

We work over the complex numbers.
As usual, a point of a variety is called \textit{general} (resp. \textit{very general}) if it lies outside a nontrivial Zariski-closed subset (resp. outside a union of countably many nontrivial Zariski-closed subsets).
Given an abelian variety $A$, we denote by $A_2$ the subgroup of $2$-torsion points of $A$. 

\subsection{The divisors of Prym semicanonical pencils}

If $C$ is a smooth curve of genus $g\geq3$, by \emph{semicanonical pencil} on $C$ we mean an even, effective theta-characteristic.
The locus of curves with a semicanonical pencil defines an irreducible divisor $\cT_g\subset\cM_g$, whose general element $C$ admits a unique semicanonical pencil $L$ and satisfies $h^0(C,L)=2$ (see \cite{te2,te}).
In the literature our semicanonical pencils are called \emph{vanishing theta-nulls}, but later we will use the theta-null divisor in the context of abelian varieties, and we prefer to call semicanonical pencils the even effective theta-characteristics
to avoid any confusion.

Let $\cR_g$ denote the moduli space of smooth Prym curves of genus $g$.
Since the parity of theta-characteristics remains constant in families (\cite{mu3}), the pullback of $\cT_g$ by the natural forgetful map $\pi:\cR_g\to\cM_g$ decomposes as $\pi^{-1}(\cT_g)=\cT_{g}^e\cup\cT_{g}^o$, where
\[
\begin{aligned}
&\cT^e_g=\set{(C,\eta)\in\cR_g\mid C\text{ has a semicanonical pencil $L$ with $h^0(C,L\otimes\eta)$ even}}\\
&\cT^o_g=\set{(C,\eta)\in\cR_g\mid C\text{ has a semicanonical pencil $L$ with $h^0(C,L\otimes\eta)$ odd}}
\end{aligned}
\]

Abusing notation, $\cT^e_g$ and $\cT^o_g$ will also denote the closures of these divisors in the Deligne-Mumford compactification $\ocR_g$ or in Beauville's partial compactification $\tcR_g$ by admissible covers.

The classes of $\cT^e_g$ and $\cT^o_g$ in the rational Picard group $\Pic(\ocR_g)_\bQ$ have recently been computed in \cite{maroj}.
The reader is referred to \cite[Section~1]{fa-lu} for the definition of the classes $\lambda, \delta_0', \delta_0'', \delta_0^{ram}, \delta_i, \delta_{g-i},\delta_{i:g-i}$ ($1\leq i\leq[g/2]$) generating $\Pic(\ocR_g)_\bQ$.
We do not specify the coefficients of $\delta_i, \delta_{g-i}, \delta_{i:g-i}$ since they are not useful for us.

\begin{thm}[\cite{maroj}] \label{class}
Let $[\cT^e_g],[\cT^o_g]\in\Pic(\ocR_g)_\bQ$ denote the cohomology classes of $\cT^e_g$, $\cT^o_g$ in the Deligne-Mumford compactification $\ocR_g$. Then, the following equalities hold:
\begin{align*}
 [\cT^e_g]&=2^{g-3}(2^{g-1}+1) \lambda -2^{2g-7} \delta_0'- 2^{g-5}(2^{g-1}+1) \delta_0^{ram} -\ldots \\
 [\cT^o_g]&=
 2^{2g-4}
 \lambda - 2^{2g-7} \delta_0'- 
 2^{2g-6} \delta_0''-
 2^{g-5}(2^{g-1}-1)
 \delta_0^{ram} -\ldots
 \end{align*}
Furthermore, for $g\neq4$ the divisors $\cT^e_g$ and $\cT^o_g$ are irreducible.
\end{thm}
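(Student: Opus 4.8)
The plan is to realise each of $\cT^e_g$ and $\cT^o_g$ as the image of a universal vanishing theta-null divisor on an auxiliary moduli space, compute the class there, and push it forward. Concretely, introduce the moduli spaces $\overline{\mathcal{RS}}^{++}_g$ and $\overline{\mathcal{RS}}^{+-}_g$ of triples $(C,\eta,L)$ — a Prym curve together with an even theta-characteristic $L$ for which $L\otimes\eta$ is even, resp. odd — compactified by admissible covers and spin curves. Since the parity of a theta-characteristic is locally constant in families, these are unions of connected components of the moduli of (Prym curve, even spin structure), and the forgetful maps $p^{e}\colon\overline{\mathcal{RS}}^{++}_g\to\ocR_g$ and $p^{o}\colon\overline{\mathcal{RS}}^{+-}_g\to\ocR_g$ are finite. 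Their degrees are group-theoretic: writing a theta-characteristic as a quadratic refinement $q$ of the Weil pairing on $JC_2$, one has $q_{L\otimes\eta}=q_L+\langle\eta,\cdot\rangle$ and $\mathrm{Arf}\bigl(q_L+\langle\eta,\cdot\rangle\bigr)=\mathrm{Arf}(q_L)+q_L(\eta)$, so for $L$ even the parity of $h^0(L\otimes\eta)$ equals $q_L(\eta)$; counting quadratic refinements with $\mathrm{Arf}=0$ and prescribed value at the fixed nonzero point $\eta$ (via the $2{:}1$, $\mathrm{Arf}$-preserving reduction to $\eta^{\perp}/\langle\eta\rangle$) gives $\deg p^{e}=2^{g-1}(2^{g-1}+1)$ and $\deg p^{o}=2^{2g-2}$. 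On these spaces $\{h^0(L)\ge 2\}=\{h^0(L)\ge 1\}$ because $h^0(L)$ is even, and as a general curve carrying a semicanonical pencil has only one (\cite{te2}), $p^{e}$ and $p^{o}$ restrict to birational maps from $\{h^0(L)\ge 1\}$ onto $\cT^e_g$ and $\cT^o_g$; hence $[\cT^e_g]=p^{e}_*[\{h^0(L)\ge 1\}]$ and $[\cT^o_g]=p^{o}_*[\{h^0(L)\ge 1\}]$.

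To compute $[\{h^0(L)\ge 1\}]$ on $\overline{\mathcal{RS}}^{+\pm}_g$: since $\deg L=g-1$ and $L\cong\omega_C\otimes L^{-1}$, Serre duality gives $h^0(L)=h^1(L)$, so after twisting the universal bundle $\mathcal{L}$ on $\pi\colon\mathcal{C}\to\overline{\mathcal{RS}}^{+\pm}_g$ by a sufficiently positive divisor, $R\pi_*\mathcal{L}$ is represented by a self-adjoint morphism $\phi\colon E\to F$ of vector bundles of equal rank, generically nondegenerate, whose degeneracy locus $\{h^0(L)\ge 1\}$ is a divisor with class computed from $c_1(R\pi_*\mathcal{L})$ by the Thom--Porteous/Harris--Tu formalism. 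Feeding in $\ch(R\pi_*\mathcal{L})=\pi_*\bigl(\ch(\mathcal{L})\,\td(\omega_\pi^{\vee})\bigr)$ with $c_1(\mathcal{L})=\tfrac12 c_1(\omega_\pi)$, the interior contribution reproduces the familiar coefficient $\tfrac14\lambda$ (the genus-independent constant behind Teixidor's formula $[\overline{\cT_g}]=2^{g-3}(2^g+1)\lambda-\cdots$ on $\ocM_g$), together with explicit multiples of the boundary classes. Pushing forward, $p^{\bullet}$ is étale in the interior, so $p^{\bullet}_*\lambda=(\deg p^{\bullet})\lambda$, which already yields the $\lambda$-coefficients $\tfrac14\cdot 2^{g-1}(2^{g-1}+1)=2^{g-3}(2^{g-1}+1)$ and $\tfrac14\cdot 2^{2g-2}=2^{2g-4}$, consistently with $2^{g-3}(2^{g-1}+1)+2^{2g-4}=2^{g-3}(2^g+1)$. (Alternatively, by \autoref{mainA}.\eqref{mainA:preim} one may obtain $[\cT^e_g]$ by pulling back the known class of $\overline{\theta_{null}}\subset\ocA_{g-1}$ along the Beauville extension $\tcP_g$, and then $[\cT^o_g]$ by difference with Teixidor's class — a useful cross-check.)

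The remaining coefficients, those of $\delta_0',\delta_0'',\delta_0^{ram}$, come from analysing how even spin structures with the prescribed $\eta$-twist parity degenerate over the three boundary divisors of $\ocR_g$, in terms of admissible covers and limit theta-characteristics: over $\delta_0'$ the two cases behave identically (coefficient $-2^{2g-7}$); over $\delta_0''$ the even case contributes $0$ while the odd one contributes $-2^{2g-6}$; and over $\delta_0^{ram}$, where the limit is governed by theta-characteristics on the genus-$(g-1)$ normalisation of the generic fibre, the counts diverge and produce $-2^{g-5}(2^{g-1}+1)$ and $-2^{g-5}(2^{g-1}-1)$ — the factors $2^{g-1}\pm1$ being, up to a power of $2$, the numbers of even/odd theta-characteristics in genus $g-1$. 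This is consistent with pulling Teixidor's class back along $\pi\colon\ocR_g\to\ocM_g$ and correcting for the ramification of $\pi$ over $\delta_0^{ram}$. I expect this boundary bookkeeping — the gluing data of the limit spin structures and the ramification of $p^{e},p^{o}$ over $\delta_0',\delta_0'',\delta_0^{ram}$ — to be the main obstacle; an alternative is to avoid Grothendieck--Riemann--Roch and instead pair both divisors with enough explicit test curves through each boundary stratum, at the cost of enumerating the semicanonical pencils acquired by nearby smooth members.

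Finally, irreducibility for $g\neq4$: since $\cT^e_g$ and $\cT^o_g$ are finite images of the loci $\{h^0(L)\ge 1\}$, it suffices that each such locus be irreducible, which follows by a monodromy/degeneration argument in the spirit of Teixidor's proof (\cite{te}) of the irreducibility of $\cT_g\subset\cM_g$: one degenerates to a general curve in a suitable boundary stratum (e.g. a chain of elliptic curves) and checks that the monodromy acts transitively on the even theta-characteristics $L$ with $h^0(L)\ge 1$ carried by nearby smooth curves, subject to the parity of $L\otimes\eta$. The case $g=4$ is exceptional — there the degeneration argument does not apply directly — and the irreducibility of $\cT^e_4$ and $\cT^o_4$ is instead obtained in Section~\ref{sec:genus4} from the description of the Prym map on these divisors (see \autoref{mainA}.\eqref{mainA:fiber} and \autoref{mainB}.\eqref{mainB:g4}).
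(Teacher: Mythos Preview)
This theorem is not proved in the present paper: it is quoted verbatim from \cite{maroj}, as the citation in the theorem header indicates and as the introduction makes explicit (``Recently, Maestro and the third author have proved in \cite{maroj} that $\cT^e_g$ and $\cT^o_g$ are irreducible for $g\neq4$, and have computed their classes in the rational Picard group of $\ocR_g$''). There is therefore no proof in this paper to compare your proposal against; only the irreducibility for $g=4$ is established here, in Section~\ref{sec:genus4}.

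That said, your outline is the natural strategy and is in spirit what one expects \cite{maroj} to do: pass to a moduli space of triples $(C,\eta,L)$ with $L$ an even theta-characteristic of prescribed $\eta$-twisted parity, realise the semicanonical-pencil locus as a degeneracy locus there, compute its class by Grothendieck--Riemann--Roch/Porteous (or by test curves), and push forward along the finite forgetful map. Your quadratic-form count giving $\deg p^{e}=2^{g-1}(2^{g-1}+1)$ and $\deg p^{o}=2^{2g-2}$, and the resulting $\lambda$-coefficients, are correct and consistent with Teixidor's class of $\cT_g$.

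Two cautions. First, the parenthetical alternative you propose for $[\cT^e_g]$ --- pulling back the class of $\overline{\theta_{null}}$ along $\tcP_g$ --- is not a shortcut: it requires the class of $\theta_{null}$ in a toroidal compactification of $\cA_{g-1}$ together with precise control of the extended Prym map over every boundary stratum of $\ocR_g$, which is at least as delicate as the direct computation. Second, you correctly identify the boundary analysis as ``the main obstacle'', but your sketch treats it as bookkeeping; in fact the determination of the $\delta_0',\delta_0'',\delta_0^{ram}$ coefficients via limit spin structures on Prym curves and the ramification of $p^{e},p^{o}$ over those boundary divisors is where the substantive work in \cite{maroj} lies, and your proposal does not yet contain the mechanism that produces, for instance, the vanishing of the $\delta_0''$-coefficient in $[\cT^e_g]$ versus its nonvanishing in $[\cT^o_g]$.
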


\subsection{Prym varieties and the Andreotti-Mayer locus}\label{prelim:Prym}

We recall some basic facts on Prym varieties, most of them coming from the seminal work \cite{mu}. 

Given a smooth Prym curve $(C,\eta)\in\cR_g$, denote by $f:\tC \lra C$ the \'etale, smooth, irreducible double cover associated to $\eta$. 
The kernel of the norm map $\Nm_f:J\tC \lra JC$ breaks into two connected components; the \emph{Prym variety} $P=P(C,\eta)$ of $(C,\eta)$ is the component containing the origin of $J\tC$. Its 2-torsion subgroup $P_2$ fits into a short exact sequence
\[
0\longrightarrow\langle\eta\rangle\longrightarrow \langle\eta\rangle^\perp\xrightarrow{f^*} P_2\longrightarrow 0
\]
where $\langle\eta\rangle^\perp\subset JC_2$ denotes the orthogonal of $\langle\eta\rangle$ with respect to the Weil pairing on $JC_2$ (see \cite[Page~332, Corollary~1]{mu}).
Furthermore, the principal polarization on $J\tC$ restricts to twice a principal polarization on $P$, giving rise to the \emph{Prym map}
\[
\cP_g:\cR_g \lra \cA_{g-1}.
\]

The singularities of a theta divisor $\Xi$ representing the principal polarization can be described as follows. 
Let 
\[
 P^+=\{M \in \Pic^{2g-2}(\tC) \mid \Nm_f (M)= \omega _C\text{ and }h^0(\tC,M) \text{ is even}\}
\]
be a ``canonical presentation'' of the Prym variety $P$ in $\Pic^{2g-2}(\tC)=\Pic^{g(\tC)-1}(\tC)$, and let $\Theta_{\tC}\subset\Pic^{2g-2}(\tC)$ denote the canonical theta divisor of $J\tC$. Then $P^+\cdot\Theta_{\tC}=2\Xi^+$, where 
\[
\Xi^+=\{M \in P^+ \mid h^0(\tC,M)\geq2\}
\]
is a canonical presentation of the theta divisor of $P$, and singularities of $\Xi^+$ may arise in two different situations:
\emph{stable singularities} given by $M\in \Xi^+$ with $h^0(\tC, M)\ge 4$, and \emph{exceptional singularities} of the form $M=f^*L\otimes A \in \Xi ^+$ with $h^0(C,L)\geq 2$ and $h^0(\tC,A)>0$.

An elementary example of exceptional singularity is given by $f^*L$, where $L$ is a semicanonical pencil on $C$ such that $h^0(\tC,f^*L)=h^0(C,L)+h^0(C,L\otimes \eta)$ is even (i.e., $h^0(C,L\otimes \eta)$ is even). 
Therefore $\cP_g(\cT^e_g)\subset \cN_0$, where $\cN_0\subset \cA_{g-1}$ denotes the \emph{Andreotti-Mayer locus} of principally polarized abelian varieties (ppav) whose theta divisor has singularities.

More precisely one has $\cP_g(\cT^e_g)\subset \theta_{null}$, where $\theta_{null}\subset\cA_{g-1}$ is the divisor of ppav whose (symmetric) theta divisor contains a singular $2$-torsion point of even multiplicity.
Indeed, the singularity being a 2-torsion point follows from the fact that the symmetric models of the theta divisor in $P\subset\Pic^0(\tC)$ are obtained when $(P^+,\Xi^+)$ is translated by a theta-characteristic lying in $P^+$;
in particular, the $2$-torsion points of $P$ in the canonical model $P^+$ are the theta-characteristics of $\tC$ lying in $P^+$.
Moreover, the multiplicity is two for the general element in $\cP_g(\cT^e_g)$ thanks to \cite[Theorem~2]{YanoAnn}, so even in any case. 

Note that $\cN_0=\theta_{null}=\cA_1 \times \cA_1$ in $\cA_2$, and $\cN_0=\theta_{null}\subset\cA_3$ is the divisor of hyperelliptic Jacobians. 
For $g\geq5$, the Andreotti-Mayer locus of $\cA_{g-1}$ is the union of two irreducible divisors (\cite{mu2,deb}): 
\[
\cN_0=\theta_{null}\cup\cN_0^\prime.
\]

Whereas the theta divisor of the general element of $\theta_{null}$ has a unique singular point (which is $2$-torsion), the theta divisor of a general element of $\cN_0^\prime$ has exactly two singular (opposite) points. 
Using this fact, Mumford computed the multiplicity of each component (see \cite{mu2}), proving the following equality as cycles: $\cN_0=\theta_{null}+2\cN_0^\prime$.

For $g=5$ this was already proved in \cite{be_invent}, where Beauville showed that $\cN_0\subset\cA_4$ has two irreducible components, namely the Jacobian locus and the divisor $\theta_{null}$.
In that paper, Beauville extended $\cP_g$ to a proper map
\[
 \tcP_g: \tcR_g \lra \cA_{g-1}
\]
by considering \emph{admissible covers} of stable curves;
$\tcR_g\subset \ocR_g$ denotes the moduli space of admissible covers.

\subsection{Brill-Noether loci on Prym varieties}\label{prelim:BN} 

In contrast to the even case, an odd semicanonical pencil $L$ for a smooth Prym curve $(C,\eta)$ does not provide singularities in the canonical theta divisor $\Xi^+$, since the pullback $f^*L$ lands in the other component of $\Nm_f^{-1}(\omega_C)$:
 \[
 P^-=\set{M \in \Pic^{2g-2}(\tC) \mid \operatorname{Nm}_f (M)= \omega _C \text{ and } h^0(\tC,M) \text{ is odd}}.
 \]
To understand the situation, following Welters \cite{welters}, we consider the \emph{Brill-Noether-Prym loci}
\[
V^r(C,\eta):=\set{M \in \Pic^{2g-2}(\tC) \mid \operatorname{Nm}_f (M)= \omega _C, \; h^0(\tC,M)\geq r+1,\;h^0(\tC,M)\equiv r+1\text{(mod 2)}}
\]
with the scheme structure defined by $P^+\cap W^r_{2g-2}(\tC)$ ($r$ odd) or $P^-\cap W^r_{2g-2}(\tC)$ ($r$ even).

For instance $V^0(C,\eta)=P ^-$, $V^1(C, \eta)=\Xi^+$ and $V^3(C,\eta)\subset \Xi^+$ is the set of stable singularities.
Moreover, for $g\geq4$ and $C$ non-hyperelliptic the scheme-theoretic equality $V^2(C,\eta)=T(\tC)$ holds (see \cite[Theorem~A]{ln}, or \cite{iz} for a first set-theoretic version when $g=5$). Here the \emph{theta-dual} $T(\tC)$ parametrizes the translates of the Abel-Prym curve $\tC\subset P$ contained in the theta divisor.

Observe that for $(C,\eta )\in \cT^o_g$ with an odd semicanonical pencil $L$, we have $f^*L\in V^2(C,\eta)$.
Moreover, $f^*L$ is a singular point of $V^2(C,\eta)$ thanks to the following result, which is essentially an application of \cite[Lemma~3.1]{ho}:

\begin{lem}\label{V2}
Let $(C,\eta)\in\cR_g$ be a non-hyperelliptic Prym curve of genus $g\geq5$.
If $M\in V^2(C,\eta)\setminus V^4(C,\eta)$, then $M$ is a singular point of $V^2(C,\eta)$ if and only if $M=f^*L\otimes A$, for line bundles $L$ and $A$ satisfying $h^0(C,L)\geq2$ and $h^0(\tC,A)>0$.
\end{lem}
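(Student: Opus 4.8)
The plan is to reduce the statement to the Riemann–Kempf-type singularity analysis of Brill–Noether loci carried out in \cite{ho}. Recall that $V^2(C,\eta)$ is defined scheme-theoretically as $P^-\cap W^2_{2g-2}(\tC)$, so its local structure at a point $M$ with $h^0(\tC,M)=3$ can be read off from the tangent cone to $W^2_{2g-2}(\tC)$ and its interaction with the linear subspace $T_M P^-\subset T_M\Pic^{2g-2}(\tC)$. By \cite[Lemma~3.1]{ho}, for $M\in W^2_{2g-2}(\tC)\setminus W^4_{2g-2}(\tC)$ the point $M$ is a smooth point of $V^2(C,\eta)$ precisely when the multiplication map governing the quadric tangent cone, restricted to the Prym-normal direction, remains nondegenerate; and this nondegeneracy fails exactly when the linear series $|M|$ on $\tC$ is \emph{special} in the sense of being compounded with the involution, i.e.\ when $M$ descends in the relevant way. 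So the first step is to quote \cite[Lemma~3.1]{ho} in the precise form that identifies singular points of $V^2(C,\eta)$ with those $M$ for which the associated $g^2$ on $\tC$ is not ``simple'' with respect to the covering involution $\sigma$.

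The second step is to translate this condition into the form stated in the lemma. Here I would argue as follows. If $M=f^*L\otimes A$ with $h^0(C,L)\geq 2$ and $h^0(\tC,A)>0$, then writing $A=\cO_{\tC}(E)$ for an effective divisor $E$, the pencil $f^*|L|$ sits inside $|M|$ and consists of $\sigma$-invariant divisors (up to the fixed part $E$), so $|M|$ is compounded with $\sigma$ in the sense that makes the Hopkins criterion fail; hence $M$ is singular. This is the ``easy'' direction. For the converse, suppose $M$ is a singular point of $V^2(C,\eta)\setminus V^4(C,\eta)$, so $h^0(\tC,M)=3$ and the Hopkins nondegeneracy fails. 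The failure forces the $g^2=|M|$ (after removing base locus) to be compounded with the involution $\sigma$: the image curve $\tC\to\bP^2$ factors, up to the $\sigma$-action, through $C$. Concretely, there is an effective $A$ with $h^0(\tC,A)>0$ and $M(-A)=f^*L$ for some $L\in\Pic(C)$, and the residual pencil $|f^*L|$ computes $h^0(\tC,f^*L)\geq 2$; combining $h^0(\tC,f^*L)=h^0(C,L)+h^0(C,L\otimes\eta)$ with the parity constraints ($\Nm_f M=\omega_C$ forces $\Nm_f(f^*L)\otimes\Nm_f(A)=\omega_C$, and one checks $h^0(C,L)$ and $h^0(C,L\otimes\eta)$ cannot both be $<1$) yields $h^0(C,L)\geq 2$. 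The non-hyperellipticity of $C$ and $g\geq 5$ are used here to exclude the degenerate configurations where the compounded structure would come from a $g^1_2$ on $C$ rather than from a genuine pencil, which is where the hypotheses in \cite{ho} and in \cite{ln} (for the identification $V^2=T(\tC)$, used implicitly to control the generic point) enter.

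The main obstacle I anticipate is precisely this converse direction: extracting, from the abstract degeneracy of the multiplication/tangent-cone map provided by \cite{ho}, a \emph{geometric} factorization $M=f^*L\otimes A$ with the stated $h^0$ bounds, and doing so without circular appeal to the very statement being proved. I would handle it by working on $\tC$ with the $g^2=|M|$ explicitly: analyze the morphism $\varphi_{|M|}\colon\tC\to\bP^2$ and its composition with $\sigma$, show the branch/base behavior forces $\varphi_{|M|}$ to factor through the quotient map $f$ up to a common effective divisor $A$, and then read off $L$ on $C$. A secondary technical point is keeping careful track of the parity hypotheses so that the candidate $L$ and $L\otimes\eta$ land on the correct sides and the Mumford exact sequence $0\to\langle\eta\rangle\to\langle\eta\rangle^\perp\to P_2\to 0$ is respected; this is routine but must be done to make the ``if and only if'' genuinely exact rather than merely up to a finite ambiguity.
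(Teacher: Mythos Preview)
You have the right reference but the directions reversed, and you miss the one algebraic observation that actually closes the argument.

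The ``only if'' implication (singular $\Rightarrow$ $M=f^*L\otimes A$ with $h^0(C,L)\geq2$, $h^0(\tC,A)>0$) is not the hard direction: it is \emph{exactly} the statement of \cite[Lemma~3.1]{ho} for $r=2$, once one knows that $V^2(C,\eta)$ has the expected pure dimension $g-4$. That dimension statement is where the hypotheses ``$C$ non-hyperelliptic'' and ``$g\geq5$'' enter, via \cite[Lemma~4.1]{ln}; they are not used to exclude a stray $g^1_2$ as you suggest. So your proposed geometric analysis of $\varphi_{|M|}$ and its interaction with $\sigma$ is unnecessary --- Hopkins already did it --- and the parity bookkeeping with $\langle\eta\rangle^\perp$ and the theta-dual identification $V^2=T(\tC)$ play no role whatsoever.

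The genuine content is the ``if'' direction, which you label easy but do not actually prove: saying that $|M|$ is ``compounded with $\sigma$ in the sense that makes the Hopkins criterion fail'' is not an argument, because Hopkins' lemma is stated only as an implication, not an equivalence. What makes the converse go through is a specific fact you never mention: since $h^0(\tC,M)=3$, every element of $\bigwedge^2 H^0(\tC,M)$ is decomposable (the Grassmannian $G(2,3)$ fills $\bP(\bigwedge^2)$). Tracing Hopkins' proof backwards with this in hand, the existence of a decomposition $M=f^*L\otimes A$ forces the relevant quadric tangent cone to degenerate on the Prym tangent space, hence $M$ is singular in $V^2(C,\eta)$. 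Without this decomposability observation the converse does not follow, and your sketch contains no substitute for it.
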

\begin{proof}
Since $V^2(C,\eta)$ has pure dimension $g-4$ (see \cite[Lemma~4.1]{ln}), the ``only if'' part is exactly the statement of \cite[Lemma~3.1]{ho} for $r=2$.
Following that proof, the converse statement is obtained as well, if one uses that every element of $\bigwedge^2 H^0(\tC,M)$ is decomposable by the assumption $h^0(\tC,M)=3$.
\end{proof}

\section{Even semicanonical pencils and the theta-null divisor}\label{sec:Te}

In this short section, we specify the covers in $\cR_g$ whose Prym variety has a theta divisor with a singular 2-torsion point (of arbitrary multiplicity), and we will deduce that $\cP^{-1}_g(\theta_{null})$ equals the divisor $\cT^e_g$.

\begin{prop}\label{thetanull} For every $g\geq3$, the preimage under the Prym map of the locus of ppav whose theta divisor has a singular 2-torsion point is
\begin{equation}\label{eqn:primage}
\cT^e_g\cup \set{(C,\eta)\in\cR_g\left| \begin{array}{l} C\text{ has an odd theta-characteristic $L$ such that}\\
\text{$h^0(C,L)\geq3$ and $L\otimes\eta$ is also odd}\end{array}\right.}.
\end{equation}
\end{prop}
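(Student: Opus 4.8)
The plan is to characterize exactly when the Prym variety $P(C,\eta)$ has a $2$-torsion singular point on its theta divisor, working entirely in the canonical presentation. Recall from the preliminaries that the $2$-torsion points of $P$ in the model $P^+\subset\Pic^{2g-2}(\tC)$ are precisely the theta-characteristics of $\tC$ lying in $P^+$, i.e. the line bundles $N$ on $\tC$ with $N^{\otimes 2}=\omega_{\tC}$, $\Nm_f(N)=\omega_C$ and $h^0(\tC,N)$ even. A $2$-torsion point of $P$ lies in $\Xi^+$ (the theta divisor) exactly when $h^0(\tC,N)\geq 2$, and being a singular point of $\Xi^+$ means either $h^0(\tC,N)\geq 4$ (stable singularity) or $N=f^*L\otimes A$ with $h^0(C,L)\geq 2$, $h^0(\tC,A)>0$ (exceptional singularity); since $N$ is a theta-characteristic of $\tC$, both cases ultimately amount to the existence of a suitable theta-characteristic or Prym-theta-characteristic structure.

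First I would note that every theta-characteristic $N$ of $\tC$ with $\Nm_f(N)=\omega_C$ decomposes: the norm condition together with $\Nm_f$ on $2$-torsion forces $N\otimes f^*\eta$ to also be a theta-characteristic of $\tC$ with the same norm, and a dimension count on $\Pic(\tC)$ shows any such $N$ is of the form $f^*L$ for some theta-characteristic $L$ on $C$ (using $\Nm_f\circ f^* = (\cdot)^{\otimes 2}$ and that $\omega_{\tC}=f^*\omega_C$). Indeed $\Nm_f(N)=\omega_C=\Nm_f(f^*L)$ for a theta-characteristic $L$ of $C$ gives $N\otimes f^*L^{-1}\in\ker\Nm_f$ of order dividing $2$, hence $N=f^*L$ or $N=f^*(L\otimes\eta)$; in either case $N=f^*L'$ for a theta-characteristic $L'$ on $C$. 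Then $h^0(\tC,f^*L')=h^0(C,L')+h^0(C,L'\otimes\eta)$, and $N\in P^+$ (i.e. this is even) splits into two cases by the parity of $L'$: either $L'$ is even with $L'\otimes\eta$ even (these are exactly the points of $\cT^e_g$ when moreover $h^0(C,L')\geq 2$, giving $h^0(\tC,N)\geq 2$), or $L'$ is odd with $L'\otimes\eta$ odd.

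Next I would analyze each case to see when $N$ actually gives a \emph{singular} point. In the first case, $L'$ even and $h^0(C,L')\geq 2$ already yields $f^*L'=f^*L'\otimes\mathcal{O}_{\tC}$ with $h^0(\mathcal{O}_{\tC})=1>0$, an exceptional singularity, so $(C,\eta)\in\cT^e_g$ — and conversely every element of $\cT^e_g$ produces such a point, as recalled in \autoref{prelim:Prym}. In the second case $L'$ and $L'\otimes\eta$ both odd, we have $h^0(\tC,f^*L')=h^0(C,L')+h^0(C,L'\otimes\eta)\geq 2$ automatically (both summands $\geq 1$), so $N=f^*L'\in\Xi^+$ always; for it to be singular we need either $h^0(\tC,N)\geq 4$, or the exceptional form with $h^0(C,M)\geq 2$ for the relevant factor. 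Writing $N=f^*L'$ with $h^0(\mathcal{O}_{\tC})>0$, this is an exceptional singularity precisely when $h^0(C,L')\geq 2$, i.e. $h^0(C,L')\geq 3$ since $L'$ is odd; symmetrically $h^0(C,L'\otimes\eta)\geq 3$ works, but by symmetry of the description (replacing $L'$ by $L'\otimes\eta$) this is the same locus. And if $h^0(C,L')=h^0(C,L'\otimes\eta)=1$, then $h^0(\tC,N)=2$, so $N$ is a smooth point of $\Xi^+$ and contributes nothing. This matches exactly the second set in \eqref{eqn:primage}.

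The main obstacle I expect is the bookkeeping of parities and the descent argument that every norm-$\omega_C$ theta-characteristic of $\tC$ is a pullback — one must be careful that $\ker\Nm_f$ is connected-component-wise handled correctly, and that the two components $P^+,P^-$ are not confused (an odd $L'$ with $L'\otimes\eta$ \emph{even} would land $f^*L'$ in $P^-$, outside the theta divisor $\Xi^+$, hence irrelevant here — this is the asymmetry flagged in \autoref{prelim:BN}). A secondary subtlety is confirming that no singular $2$-torsion point is missed when $h^0(\tC,N)\geq 4$ without $N$ having the exceptional decomposition: but since $N=f^*L'$ always holds here, $h^0(\tC,N)\geq 4$ forces $h^0(C,L')\geq 2$ or $h^0(C,L'\otimes\eta)\geq 2$, so such points are already captured by one of the two cases. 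Assembling these observations gives the claimed equality \eqref{eqn:primage}; I would close by remarking that the containment "$\supseteq$" is the direct construction of the exceptional singularity and "$\subseteq$" is the descent-plus-case-analysis above.
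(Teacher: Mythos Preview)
Your overall strategy---first show that every $2$-torsion point of $P$ in the model $P^+$ is of the form $f^*L'$ for a theta-characteristic $L'$ on $C$, then split into the even/even and odd/odd parity cases---is the paper's approach, just reorganized. However, two steps do not go through as written.

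\textbf{The descent step.} You assert that $N\otimes f^*L^{-1}\in\ker\Nm_f$ being $2$-torsion forces it to equal $\cO_{\tC}$ or $f^*\eta$. This is false: the $2$-torsion of $\ker\Nm_f$ has order $2^{2g-1}$, not $2$ (you may be thinking of $\ker f^*=\langle\eta\rangle$). The correct argument, which is what the paper uses, goes via $\sigma$-invariance: from $N^{\otimes2}=\omega_{\tC}=f^*\omega_C=f^*\Nm_f(N)=N\otimes\sigma(N)$ one gets $\sigma(N)\cong N$, and then $N$ descends to $C$. Equivalently, any $2$-torsion $\alpha\in\ker\Nm_f$ satisfies $\sigma(\alpha)=\alpha^{-1}=\alpha$, hence $\alpha\in f^*(JC_2)$. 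This does yield $N=f^*L'$ for some theta-characteristic $L'$, so your conclusion survives, but not your justification.

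\textbf{Exceptional singularities of lower degree.} In the odd/odd case with $h^0(C,L')=h^0(C,L'\otimes\eta)=1$ you claim $N=f^*L'$ is smooth on $\Xi^+$ because $h^0(\tC,N)=2$. But $h^0=2$ only rules out \emph{stable} singularities; you must still exclude an exceptional decomposition $N=f^*M\otimes A$ with $\deg M<g-1$, $h^0(C,M)\geq2$ and $h^0(\tC,A)>0$. The paper handles this explicitly: using $N^{\otimes2}=\omega_{\tC}$ one shows $\sigma(A)\cong A$, so $A=f^*B$ and $N=f^*(M\otimes B)$ with $M\otimes B\in\{L',L'\otimes\eta\}$; effectivity of $B$ or $B\otimes\eta$ then forces $h^0(C,L')\geq h^0(C,M)\geq2$ or $h^0(C,L'\otimes\eta)\geq2$, a contradiction. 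Without this step your word ``precisely'' is unsupported.
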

\begin{proof}
Let $\tC\xrightarrow{f} C$ be the double \'etale cover defined by $(C,\eta)\in\cR_g$, and let $\sigma:\tC\to\tC$ be the induced involution exchanging sheets.
Throughout this proof, we consider the canonical presentation $(P^+,\Xi^+)$ in $\Pic^{2g-2}(\tC)$ of the Prym variety; recall that the $2$-torsion points of $P$ correspond to theta-characteristics of $\tC$ lying in $P^+$.

We have already seen in Section \ref{prelim:Prym} that the inclusion $\cT^e_g\subset\cP_g^{-1}(\theta_{null})$ holds. Moreover, if $C$ has an odd theta-characteristic $L$ with $h^0(C,L)\geq3$ and $L\otimes\eta$ odd, then
\[
h^0(\tC,f^*L)=h^0(C,L)+h^0(C,L\otimes\eta)\geq4
\]
and hence $f^*L$ is a theta-characteristic on $\tC$ defining a stable singularity of $\Xi^+$.

Therefore, to finish the proof it suffices to check that if $(C,\eta)\notin\cT^e_g$ and $\cP_g(C,\eta)$ has a theta divisor with a singular 2-torsion point, then $C$ has an odd theta-characteristic $L$ with $h^0(C,L)\geq3$ and $L\otimes\eta$ odd. 

So let $M\in\Xi^+$ be a singular point, corresponding to a theta-characteristic on $\tC$.
If $h^0(\tC,M)=2$, then the singularity $M$ is exceptional with $M=f^*L\otimes A$ and $h^0(C,L)= 2$, $h^0(\tC,A)>0$. Let us check that this cannot happen under the assumption $(C,\eta)\notin\cT^e_g$.

Indeed, if $\deg L=g-1$ then $A=\cO_{\tC}$ and $M=f^*L$. Since $\omega_C=\Nm_f(M)=L^{\otimes2}$, it follows that either $L$ or $L\otimes\eta$ is an even semicanonical pencil for the cover $f$, which is a contradiction.
If $\deg L<g-1$, then, under the assumption $M^2=\omega_{\tC}$, we have
\[
f^*L^2\otimes A\otimes\sigma(A)=f^*(\Nm_f(M))=f^*\omega_C=\omega_{\tC}=M^2=f^*L^2\otimes A^2
\]
and therefore $A$ is invariant by the action of the involution $\sigma$.
This allows us to express $M=f^*(L')$ for a line bundle $L'$ of degree $g-1$, which again leads to a contradiction.

Now assume that $M\in\Xi^+$ is defining a stable singularity, namely $h^0(\tC,M)\geq4$.
There is a chain of equalitites
\[
M^2=\omega_{\tC}=f^*\omega_C=f^*\Nm_f(M)=M\otimes\sigma(M)
\]
giving $M=\sigma(M)$, which is equivalent to $M=f^*L$ for a line bundle $L$ of degree $g-1$ on $C$.
Moreover, the condition $\Nm_f(M)=\omega_C$ reads as $L$ being a theta-characteristic on $C$, for which
\[
4\leq h^0(\tC,f^*L)=h^0(C,L)+h^0(C,L\otimes\eta).
\]
By the assumption $M\in\Xi^+$, the two summands must have the same parity, and cannot be even since $(C,\eta)\not\in\cT^e_g$.
It follows that the summands must be odd, which finishes the proof.
\end{proof}

The following corollary includes \autoref{mainA}.\eqref{mainA:preim}:

\begin{cor}\label{thetanull2} The divisor of even semicanonical pencils $\cT^e_g$ satisfies:
\begin{enumerate}[{\rm (1)}]
 \item\label{thetanull2:from3} For $g\geq3$, the equality $\cT^e_g=\cP_g^{-1}\big(\theta_{null}\big)$ holds.
 \item\label{thetanull2:from6} For $g\geq 6$,
 the irreducible components of \eqref{eqn:primage} of \autoref{thetanull} outside $\cT_g^e$ have codimension $3$.
\end{enumerate}
\end{cor}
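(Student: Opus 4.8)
The plan is to deduce both statements from \autoref{thetanull}. For item \eqref{thetanull2:from3}, I would argue that the locus of ppav in $\cA_{g-1}$ whose theta divisor carries a singular $2$-torsion point is exactly $\theta_{null}$. One inclusion is the definition (a $2$-torsion point of even multiplicity is in particular singular); for the reverse, I would invoke the standard fact (Riemann singularity theorem together with the parity of the multiplicity of a symmetric theta divisor at a $2$-torsion point, cf.\ \cite{mu}) that the multiplicity of a symmetric theta divisor at a $2$-torsion point has a fixed parity, so any such singular point automatically has even multiplicity and hence lies in $\theta_{null}$. Thus $\cP_g^{-1}(\theta_{null})$ equals the locus described in \autoref{thetanull}, and it remains to show the extra component there is contained in $\cT^e_g$: if $C$ carries an odd theta-characteristic $L$ with $h^0(C,L)\ge 3$ and $L\otimes\eta$ odd, then $h^0(C,L)\ge 3$ is odd so $h^0(C,L)\ge 3$; but I want a \emph{semicanonical} (even, effective) theta-characteristic. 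Here the argument is that having two odd theta-characteristics $L$, $L\otimes\eta$ with $h^0\geq 3$ forces (by a dimension count on the variety of such curves, via Teixidor's results \cite{te2}) the curve to actually lie in $\cT^e_g$, or more simply: the closure of that locus is contained in $\cT^e_g$ because along it some limiting theta-characteristic becomes even and effective. I would make this precise using that $\cT^e_g$ is a divisor while the extra locus has higher codimension (which is exactly item \eqref{thetanull2:from6}), so it cannot contribute a new divisorial component to $\cP_g^{-1}(\theta_{null})$; combined with the reverse inclusion $\cT^e_g\subset\cP_g^{-1}(\theta_{null})$ already established and the fact that $\theta_{null}$ is irreducible of codimension $1$ and $\cP_g$ is dominant, equality follows.

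For item \eqref{thetanull2:from6}, I would estimate the codimension of the two types of components in \eqref{eqn:primage} lying outside $\cT^e_g$ using Teixidor's dimension counts for loci of curves with theta-characteristics having prescribed (larger than expected) number of sections \cite{te2}. The locus of curves $C$ of genus $g$ carrying an odd theta-characteristic $L$ with $h^0(C,L)\ge 3$ has codimension $\binom{3}{2}=3$ in $\cM_g$ for $g$ large enough (this is the expected and, by Teixidor, actual codimension of the locus where an odd theta-characteristic acquires $\ge 3$ sections, namely $\binom{h^0-1}{2}$ with $h^0$ moving from $1$ to $3$). Pulling back to $\cR_g$ via the finite forgetful map preserves codimension, and imposing moreover that $L\otimes\eta$ be odd is an open-and-closed condition on the fiber, so it does not change the codimension. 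Hence each such component has codimension $3$ in $\cR_g$ for $g\ge 6$. A similar (in fact easier) count handles any component coming from the stable-singularity analysis in the proof of \autoref{thetanull}.

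The main obstacle I anticipate is making the codimension count in \eqref{thetanull2:from6} rigorous and uniform for all $g\ge 6$: Teixidor's results give the codimension of loci of curves with a theta-characteristic of prescribed number of sections, but one must check that the relevant locus is nonempty and of the expected dimension precisely in this range, and that adding the $2$-torsion datum $\eta$ with the parity constraint on $L\otimes\eta$ indeed cuts out a nonempty sublocus of the same codimension rather than something smaller or empty. I would resolve this by exhibiting explicit examples (or citing the nonemptiness statements in \cite{te2}) and by noting that, over a generic point of the $\cM_g$-locus, the $2$-torsion points $\eta$ with $L\otimes\eta$ odd form a nonempty coset of a subgroup of $JC_2$, so the forgetful map restricted to this sublocus is dominant onto it with finite fibers. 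Once the codimension is pinned down as $3$, item \eqref{thetanull2:from6} is immediate, and as explained above it feeds back to complete the proof of \eqref{thetanull2:from3} by ruling out extra divisorial components.
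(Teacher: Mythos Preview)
Your proposal contains a genuine error and a missing case.

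\textbf{The parity claim is false.} You assert that the multiplicity of a symmetric theta divisor at a $2$-torsion point is always even, so that the locus of ppav with a singular $2$-torsion point coincides with $\theta_{null}$. This is not true: the parity of the multiplicity equals the parity of the $2$-torsion point as a characteristic, and nothing prevents an \emph{odd} $2$-torsion point from acquiring multiplicity $\geq 3$. Indeed, \autoref{mult3} exhibits precisely such examples (intermediate Jacobians of cubic threefolds have a unique singular point of multiplicity $3$ on their theta divisor). So the locus in \autoref{thetanull} is genuinely larger than $\cP_g^{-1}(\theta_{null})$ for $g\geq 6$: the extra components are \emph{not} contained in $\cT^e_g$, contrary to what you try to argue, and their image under $\cP_g$ need not land in $\theta_{null}$ at all. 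The correct logic is simply the chain of inclusions
\[
\cT^e_g \;\subset\; \cP_g^{-1}(\theta_{null}) \;\subset\; \text{(locus of \eqref{eqn:primage})},
\]
together with the observation that $\cP_g^{-1}(\theta_{null})$ has pure codimension $1$ (preimage of a divisor under a morphism, no dominance needed; your appeal to dominance is also wrong for $g\geq 7$). Since by \eqref{thetanull2:from6} the extra piece of \eqref{eqn:primage} has codimension $3$, the middle term must equal $\cT^e_g$.

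\textbf{The low-genus cases are missing.} Your argument for \eqref{thetanull2:from3} relies entirely on the codimension-$3$ statement \eqref{thetanull2:from6}, which is only asserted for $g\geq 6$. You give no argument for $g=3,4,5$. The paper handles these directly: by Clifford's theorem no curve of genus $g\leq 4$ has a theta-characteristic with $h^0\geq 3$, and for $g=5$ such a curve must be hyperelliptic, hence already in $\cT^e_5$. Thus for $3\leq g\leq 5$ the extra locus in \eqref{eqn:primage} is empty or contained in $\cT^e_g$, and \eqref{thetanull2:from3} follows at once.

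For \eqref{thetanull2:from6} your use of Teixidor's results \cite{te2} is correct and matches the paper. (A minor misreading: the condition in \eqref{eqn:primage} is that $L\otimes\eta$ be odd, not that $h^0(L\otimes\eta)\geq 3$.)
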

\begin{proof}
Let us first prove \eqref{thetanull2:from3} when $3\leq g\leq 5$, by checking that in this range the locus \eqref{eqn:primage} described in \autoref{thetanull} equals $\cT^e_g$. Indeed, a smooth curve $C$ of genus $g\leq4$ has no theta-characteristic $L$ with $h^0(C,L)\geq3$ (otherwise it would contradict Clifford's theorem). 
When $g=5$, such a theta-characteristic is necessarily a $g^2_4$, so $C$ must be hyperelliptic (it has Clifford index $0$).
Since covers of hyperelliptic curves are contained in $\cT^e_g$ for $g\geq4$, the claim follows.

Item \eqref{thetanull2:from6} is a direct consequence of \cite[Theorem~2.17]{te2}. Finally, to prove \eqref{thetanull2:from3} for $g\geq6$ one simply combines \eqref{thetanull2:from6} with the observation that $\cP_g^{-1}(\theta_{null})$ must have pure codimension 1 (since it is the preimage of a divisor). 
\end{proof}

\begin{rem}\label{mult3}
Let $(C,\eta)\in\cR_g$ be a general point of one of the codimension 3 components of \eqref{eqn:primage} of \autoref{thetanull}.
Again by \cite[Theorem~2.17]{te2}, there exists a unique odd theta-characteristic $L$ on $C$ with $h^0(C,L)\geq3$, which satisfies $h^0(C,L)=3$ and $h^0(C,L\otimes\eta)=1$.
Let $\tC\xrightarrow{f} C$ denote the double \'etale cover associated to $(C,\eta)$.
The $2$-torsion point $M=f^*L\in\Xi^+$ is both a stable and an exceptional singularity, having multiplicity 3 in $\Xi^+$ by \cite[Theorem~2]{YanoAnn}.

When $g=6$, there is a classical construction providing one of these codimension 3 components. For a smooth cubic threefold $V\subset\bP^4$, let $JV$ denote its intermediate Jacobian; its (canonical) theta divisor has multiplicity $3$ at the origin, which is its unique singularity (see \cite{cg} and \cite{be_sing}).

Over the 10-dimensional locus $\cC\subset\cA_5$ of intermediate Jacobians of cubic threefolds, the Prym map $\cP_6$ fails to be finite: the fiber $\cP_6^{-1}(JV)$ is $2$-dimensional, given by (an open subset of) the Fano surface of lines on $V$ (\cite[Part~V]{donsmith}).
Then the preimage $\cP_6^{-1}(\cC)$ equals
\[
\cR\cQ^-=\set{(Q,\eta)\in\cR_6\mid Q\text{ is a smooth plane quintic, }h^0(Q,\cO_Q(1)\otimes\eta)\text{ is odd}}
\]
and, since $\cR\cQ^-$ is not contained in $\cT^e_6$ (a general quintic admits no semicanonical pencil), it follows that $\cR\cQ^-$ is one of the codimension 3 components mentioned above.
\end{rem}

\section{Genus 3 and hyperelliptic Prym curves} 
\label{sec:genus3}

In the case of smooth curves of genus $3$, a semicanonical pencil is the same as a $g^1_2$, so the divisor $\cT_3\subset \mathcal M_3$ equals the hyperelliptic locus $\cH_3$.
At the level of Prym curves, the irreducible divisors $\cT^e_3$ and $\cT^o_3$ admit an easy description in terms of the number of Weierstrass points needed to express the $2$-torsion line bundle (see \cite[Example~2.1]{maroj}).

Recall that the Prym map (on smooth covers) $\cP_3:\cR_3\to\cA_2$ is surjective. When we consider its restriction to $\cT_3^e$ and $\cT_3^o$, two distinct behaviors arise.
On the one hand, $\cT^e_3=\cP_3^{-1}(\theta_{null})$ as we saw in \autoref{thetanull2}.\eqref{thetanull2:from3}, where $\theta_{null}=\cA_1\times\cA_1\subset\cA_2$ is the locus formed by products of elliptic curves.
On the other hand, as stated in \autoref{mainB}.\eqref{mainB:g3}, the restriction to $\cT^o_3$ is dominant with general fiber isomorphic to the complement in $\bP^2$ of six lines and a smooth conic.

\begin{proof}[{Proof of \autoref{mainB}.\eqref{mainB:g3}}]
Let $C'$ be a smooth curve of genus 2.
Since $C'$ is hyperelliptic, by \cite[page~346]{mu}, expressing $JC'$ as the Prym of a cover of a genus $3$ hyperelliptic curve $C$ is equivalent to the construction of a diagram
\[
\xymatrix{&\tC\ar[ld]\ar[rd]\\
C\ar[rd]_p&&C'\ar[ld]^{p'}\\
&\bP^1
}
\]
where: the double cover $p$ is branched at the six branch points of $p'$ and two extra points, and $\tC$ is the normalization of $C\times_{\bP^1}C'$.
This proves the dominance of $\cP_3\, \vert_{\cT_3^o}$.

In order to determine the fiber, we consider a curve $C'$ which is general in the following sense: 
if $p_1,\ldots,p_6$ are the branch points of the double cover $p':C'\to\bP^1$, then there is no nontrivial projective transformation of $\bP^1$ mapping four points of $\set{p_1,\ldots,p_6}$ to four points (possibly not the same four) in $\set{p_1,\ldots,p_6}$.
Under this assumption, the fiber $\cP_3\, \vert_{\cT_3^o}^{-1}(JC')$ parametrizes all the possible choices of two non-repeated points in $\bP^1\setminus\{p_1,\ldots ,p_6\}$, according to the previous description.

Consider the natural isomorphism $(\bP^1)^{(2)}\cong{(\bP^2)}^*$ identifying a pair of points on a smooth plane conic with the line joining them.
Under this identification $(\bP^1\setminus\{p_1,\ldots ,p_6\})^{(2)}$ is isomorphic to the complement in $(\bP^2)^*$ of six lines (no three of them concurrent), since we are considering lines passing through none of the six marked points of the conic.

To finish the description of the general fiber $\cP_3\, \vert_{\cT_3^o}^{-1}(JC')$, simply note that we are avoiding lines that are tangent to the conic as well, since we are considering pairs in $(\bP^1\setminus\set{p_1,\ldots ,p_6})^{(2)}$ formed by two distinct points.

Finally, since a variety which dominates a rationally connected variety with rationally connected generic fibers is rationally connected (\cite{ghs}), we obtain that $\cT_3^o$ is rationally connected.
\end{proof}

\begin{rem}We can be more precise regarding the image of $\cT^o_3$ via the Prym map and the rational connectedness of $\cT^e_3$ and $\cT^o_3$:
\begin{enumerate}[{\rm (1)}]
\item Since the semicanonical pencil on a curve of $\cT_3$ is unique, the divisors $\cT^e_3$ and $\cT^o_3$ are disjoint.
Therefore, thanks to the equality $\cP_3^{-1}(\theta_{null})=\cT^e_3$ we obtain that $\cP_3\, \vert_{\cT_3^o}$ is dominant, but not surjective.
Indeed, $\cP_3(\cT_3^o)=\cA_2\setminus\theta_{null}$.

\item It is not difficult to prove directly that both divisors $\cT^e_3$ and $\cT^o_3$ are rationally connected using their description in terms of Weierstrass points.
Indeed, any two smooth Prym curves of $\cT^e_3$ (resp.~$\cT^o_3$) can be connected by a chain of (at most five) rational curves contained in $\cT^e_3\subset\ocR_3$ (resp.~$\cT^o_3\subset\ocR_3$);
basically, each rational curve parametrizes hyperelliptic Prym curves with all but one of its branch points remaining constant.
\end{enumerate}
\end{rem}

\section{Genus 4 and Recillas' trigonal construction} \label{sec:genus4}

In this section, we carry out an analysis of the case of genus $4$, which leads to the proof of \autoref{mainB}.\eqref{mainB:g4} and \autoref{mainA}.\eqref{mainA:fiber}. 
Recillas' trigonal construction provides an isomorphism between two moduli spaces, each of them endowed with a natural involution.
Our strategy exploits the fact that these involutions are compatible and that the divisors $\cT^o_4$ and $\cT^e_4$ are contained in the locus of fixed points of one of these involutions.

Recall that a smooth hyperelliptic curve $C$ of genus $4$ has ten distinct semicanonical pencils, corresponding to the sum of the $g^1_2$ as movable part with a Weierstrass base point. 
For non-hyperelliptic $C$, the canonical map $C\to\bP^3$ embeds $C$ as the complete intersection of a quadric $Q$ and a cubic surface $S$.

If $Q$ is smooth, then the curve $C$ has exactly two $g^1_3$, which parametrize the intersection of $S$ with the lines in each of the rulings of $Q$.
Observe that the sum of the two $g^1_3$ is the canonical system of $C$ and the curve $C$ has no semicanonical pencil.
On the other hand, if the quadric $Q$ is singular, then $C$ has a unique $g^1_3$ which is a semicanonical pencil;
moreover, $g^1_3$ is given by the intersections of $S$ with the system of lines in $Q$ containing the singular point.

It follows that  $\cH_4\subset\cT_4\subset\cM_4$, where $\cH_4$ stands for the hyperelliptic locus. Notice that $\cT_4$ is the closure of the locus of non-hyperelliptic curves whose canonical model is contained in a singular quadric.
Moreover, since the semicanonical pencil of a non-hyperelliptic curve of $\cT_4$ is unique, we have $\cR\cH_4=\cT^e_4\cap\cT^o_4$, where $\cR\cH_4=\pi^{-1}(\cH_4)\subset\cR_4$ denotes the locus of hyperelliptic Prym curves.

Now we address the problem of understanding the restriction of the Prym map $\cP_4:\cR_4\to\cA_3$ to the divisors $\cT^e_4$ and $\cT^o_4$.
Consider the following moduli spaces:
\begin{align*}
 \RGtri&=\{(C,\eta, M) \mid (C,\eta)\in\cR_4\setminus\cR\cH_4 \text{ and $M$ is a $g^1_3$ on $C$}\}/\cong \\
 \Gtetra&=\{(X,L) \mid X\in\cM_3 \text{ and $L$ is a (not necessarily complete) base-point-free $g^1_4$ on $X$}\}/\cong .
\end{align*}

That is, $\RGtri$ parametrizes (isomorphism classes of) covers of non-hyperelliptic genus $4$ curves endowed with a $g^1_3$, and $\Gtetra$ parametrizes genus $3$ curves endowed with a base-point-free $g^1_4$.
Both moduli spaces have projection maps forgetting the linear series:

\begin{itemize}[\textbullet]
 \item The projection $\RGtri\xrightarrow{\varphi}\cR_4\setminus\cR\cH_4$ is generically finite of degree $2$.
 Moreover, $\RGtri$ carries a natural involution $\sigma$ defined by \begin{equation}\label{eq:defsig}\sigma(C,\eta,M)=(C,\eta,\omega_C\otimes M^{-1}),\end{equation}
 which exchanges the two sheets of the open subset of $\RGtri$ where $\varphi$ is finite.
 
 \item Let us study the fiber of the projection $\Gtetra\xrightarrow{\psi}\cM_3$ over a curve $X\in\cM_3$.
 First of all, note that the scheme $\cG^1_4(X)$ of $g^1_4$ linear series on $X$ is easily identified with the blow-up of $\Pic^4(X)$ at the canonical sheaf $\omega_X$.
 This scheme carries a natural involution, given by $L\mapsto\omega_X^2\otimes L^{-1}$;
 indeed, this involution, defined outside the exceptional divisor of $\cG^1_4(X)$, extends as the identity on the exceptional divisor as proved in \cite[Proposition~6.1]{fns}.
 
 \noindent If $X$ is non-hyperelliptic and we regard it as a quartic plane curve, the $g^1_4$'s on $X$ with base points are exactly those given by pencils of lines through points of $X$.
 Linear series with base points are thus parametrized by $X$, and are contained in the exceptional divisor of $\cG^1_4(X)$.
 
 \noindent If $X$ is hyperelliptic, tetragonal series with base points are those of the form $g^1_2+p+q$ and hence the open subset of $\cG^1_4(X)$ parametrizing series without base points is the complement of a copy of $X^{(2)}$.

 \noindent According to this description, it follows that for any smooth curve $X$ the open subset of $\cG^1_4(X)$ formed by base-point-free tetragonal series remains invariant by the involution.
 Since this involution is compatible as well with the automorphisms of $X$, we obtain an involution $\tau:\Gtetra\to\Gtetra$ defined by \begin{equation}\label{eq:deftau}\tau(X,L)=(X,\omega_X^2\otimes L^{-1}).\end{equation}
\end{itemize}

Recillas' trigonal construction (cf. \cite{rec}) yields a morphism $R:\RGtri\longrightarrow \Gtetra$ making the following diagram commutative:
\[
 \begin{xy}
\xymatrix{
\RGtri
\ar[rr]^R\ar[d]_\varphi&&\Gtetra
\ar[d]^\psi\\
\cR_4\setminus\cR\cH_4\ar[rd]_{\cP_4}&&\cM_3\ar[ld]^{\text{Torelli}}\\
&\cA_3
}
\end{xy}
\]

More precisely, $R$ provides an isomorphism of $\RGtri $ with the open set $ \Gtetrop\subset \Gtetra$ of tetragonal pairs $(X,L)$ with the property that the $g^1_4$ $L$ is not special, i.e., $L$ contains no divisor of the form $2p+2q$ or $4p$ (see \cite[Theorem~2.9]{do_fibres}).
This open set dominates $\mathcal M_3$.

Note that $\Gtetra$ is clearly irreducible, since $\mathcal M_3$ and all the fibers of the projection $\Gtetra\to \mathcal M_3$ are so.
Therefore $\Gtetrop$ and $\RGtri$ are also irreducible. 

Now our purpose is to prove that Recillas' construction commutes with the natural involutions $\sigma$ and $\tau$ (defined in \eqref{eq:defsig} and \eqref{eq:deftau}), namely:

\begin{prop}\label{inv_commuten}
The equality $R\circ\sigma=\tau\circ R$ holds.
\end{prop}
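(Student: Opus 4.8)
The plan is to unwind Recillas' construction explicitly and track how the two tetragonal series attached to $(C,\eta,M)$ and to $\sigma(C,\eta,M)=(C,\eta,\omega_C\otimes M^{-1})$ compare. Recall the construction: given $(C,\eta,M)\in\RGtri$ with $f:\tC\to C$ the \'etale double cover and $M=g^1_3$, the trigonal map $C\to\bP^1$ composed with $f$ gives a degree $6$ map $\tC\to\bP^1$, and one factors out the natural involution to obtain a genus $3$ curve $X$ with a $g^1_4$, namely $L$. Concretely, a fiber of $C\to\bP^1$ is a divisor $p_1+p_2+p_3\in|M|$, its preimages in $\tC$ are pairs $\{p_i^+,p_i^-\}$, and the four points of the corresponding fiber of $X\to\bP^1$ are the four ``even'' sheetings $p_1^{\epsilon_1}+p_2^{\epsilon_2}+p_3^{\epsilon_3}$ with $\epsilon_1\epsilon_2\epsilon_3=+$. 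So the first step is to recall this description precisely (citing \cite{rec} and \cite{do_fibres}) and to record the resulting identification of $\tC$ and $X$, since both are quotients of the fiber product $C\times_{\bP^1}\tC$ or of $\tC\times_{\bP^1}\tC$.

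The key observation I would aim for is that replacing $M$ by $\omega_C\otimes M^{-1}$ replaces the trigonal pencil $|M|$ by the \emph{residual} trigonal pencil, and that on the level of the genus $3$ output this residuation corresponds \emph{exactly} to the involution $L\mapsto\omega_X^2\otimes L^{-1}$. The cleanest way to see this: the genus $3$ curve $X$ produced by Recillas' construction does not depend on $M$ at all in a certain sense — rather, $X$ together with its pair of tetragonal series $L$ and $L'=\omega_X^2\otimes L^{-1}$ packages the data $(\tC,C)$, where the two $g^1_4$'s correspond to the two ways of splitting the sheets. More precisely, the two trigonal structures $M$ and $\omega_C\otimes M^{-1}$ on $C$, which sum to $\omega_C$, both pull back to the \emph{same} degree $6$ map $\tC\to\bP^1$ composed appropriately, and the bitrigonal/tetragonal duality identity of Donagi (see \cite[Section~2]{do_fibres}) says precisely that the two genus $3$ curves-with-$g^1_4$ obtained are $(X,L)$ and $(X,\omega_X^2\otimes L^{-1})$. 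So the second step is to invoke (or re-derive) this duality statement and check that the two instances of Recillas' construction for $M$ and its residual are the two sides of it.

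Alternatively — and this is probably the more self-contained route I would actually write — I would verify the identity on a dense open subset and then extend by continuity, using irreducibility of $\RGtri$ (already established in the excerpt). On the open set where everything is smooth and the fibers of $C\to\bP^1$ are reduced, one has the explicit combinatorial description above; I would compute the line bundle $L$ on $X$ attached to $M$ by exhibiting a divisor in $|L|$ (a fiber of $X\to\bP^1$) and likewise compute the line bundle $L^\sigma$ attached to $\omega_C\otimes M^{-1}$, then compare $L^\sigma$ with $\omega_X^2\otimes L^{-1}$ by a Riemann--Roch / degree-and-effectivity argument. The point is that the underlying curve $X$ is the same for $M$ and for $\omega_C\otimes M^{-1}$ (both fiber products, suitably normalized, yield isomorphic curves — this is the crux and needs a clean argument, e.g. identifying both with the quotient of $\tC\times_{\bP^1}\tC$ minus diagonal by the simultaneous sheet-swap), and only the tetragonal structure on it gets replaced by its Serre-type dual. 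Once $R\circ\sigma$ and $\tau\circ R$ agree on a dense open subset of the irreducible variety $\RGtri$, and both are morphisms to the separated variety $\Gtetra$, they agree everywhere.

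The main obstacle I anticipate is precisely the step asserting that the genus $3$ curve is unchanged when $M$ is replaced by $\omega_C\otimes M^{-1}$: a priori Recillas' construction takes as input the datum $(C,\eta,M)$ and one must show the output curve $X$ only depends on $(C,\eta)$ (not on $M$), the dependence on $M$ being entirely in the choice of tetragonal bundle. This is the content of the ``bigonal/trigonal'' symmetry underlying Donagi's tetragonal construction, and making it rigorous requires carefully setting up the relevant fiber products and their normalizations and checking that the sheet-swap involutions match up. Everything after that — computing the two $g^1_4$'s and matching $L^\sigma$ with $\omega_X^2\otimes L^{-1}$ — is a routine degree count, and the passage from a dense open set to all of $\RGtri$ is immediate from irreducibility and separatedness.
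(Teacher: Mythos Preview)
Your overall strategy---verify the identity on a dense open set of an irreducible variety, then extend---matches the paper's. But your choice to work with the forward map $R$ rather than its inverse $R^{-1}$ creates a real difficulty at exactly the point you flag as ``the main obstacle,'' and you do not actually resolve it.

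The paper goes the other way: it proves $\sigma\circ R^{-1}=R^{-1}\circ\tau$ on an open set of $\Gtetra$, taking a plane quartic $X$ and a complete $g^1_4$ $L\neq\omega_X$. In this direction the inverse Recillas construction is completely explicit: $C'_L=\{p+q\in X^{(2)}\mid h^0(L(-p-q))\neq0\}$ with the residual involution $i_L$. The crucial isomorphism $\rho:C'_L\to C'_{\widetilde L}$ (with $\widetilde L=\omega_X^2\otimes L^{-1}$) is then given by an elementary geometric recipe---send $p+q$ to the residual intersection of $X$ with the secant line $\overline{pq}$---and one checks in two lines that $\rho$ intertwines $i_L$ with $i_{\widetilde L}$. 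A second short lemma shows $\rho_*(\omega_{C_L}\otimes M_L^{-1})=M_{\widetilde L}$ by excluding $\rho_*M_L=M_{\widetilde L}$ via a collinearity argument. Everything is concrete plane-curve geometry.

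By contrast, in your forward direction the two $g^1_3$'s $M$ and $\omega_C\otimes M^{-1}$ define genuinely different maps $C\to\bP^1$, so the two Recillas outputs $X$ and $X'$ are built from fiber products over \emph{different} base $\bP^1$'s. Your proposed identification ``both with the quotient of $\tC\times_{\bP^1}\tC$'' does not make sense as stated, since there is no single $\bP^1$ to fiber over; and your appeal to a ``bitrigonal/tetragonal duality identity of Donagi'' is not pinned to a precise statement that would deliver $X\cong X'$ together with $L'\cong\omega_X^2\otimes L^{-1}$. So the step you correctly identify as the crux is left as a promissory note. The paper's reversal to the $\Gtetra$ side is precisely what turns this crux into a one-line map on a symmetric square.
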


By irreducibility, it is enough to check that $\sigma\circ R^{-1}=R^{-1}\circ\tau$ on an open set $U$.
We define $U$ to be the intersection of $\Gtetrop$ with the open set of pairs $(X,L)$ where $X$ is non-hyperelliptic and $L$ is not contained in the canonical bundle of $X$.

Hence let $X\in\cM_3$ be non-hyperelliptic, regarded as a quartic plane curve, and consider $L\in\Pic^4(X)\setminus\{\omega_X\}$ a complete $g^1_4$ on $X$ such that the linear system $|L|$ contains no divisor of the form $2p+2q$ or $4p$;
this is true for $L\in\Pic^4(X)\setminus\{\omega_X\}$ general. 

The element $R^{-1}(X,L)\in\RGtri$ can be explicitly described as follows: the curve
\[
C_L'=\{p+q\in X^{(2)}\mid h^0(L(-p-q))\neq0\}
\]
is smooth of genus $7$, with a fixed-point-free involution $i_L$ sending $p+q$ to the unique divisor $r+s\in|L(-p-q)|$.
The quotient $C_L=C_L'/\langle i_L\rangle$ has genus $4$ and naturally comes with a degree $3$ map to $\bP^1$, corresponding to a $g^1_3$ that we denote by $M_L$.
Then
\[
R^{-1}(X,L)=(C_L,\eta_L,M_L)
\]
where $\eta_L\in \left(JC_L\right)_2$ defines the \'etale cover $C_L'\to C_L$.

Now we denote $\widetilde{L}=\omega_X^2\otimes L^{-1}$, so that $\tau(X,L)=(X,\widetilde{L})$. 
Hence \autoref{inv_commuten} boils down to proving that
\[
(C_L,\eta_L,\omega_{C_L}\otimes M_L^{-1})=\sigma(R^{-1}(X,L))=R^{-1}(\tau(X,L))=(C_{\widetilde{L}},\eta_{\widetilde{L}},M_{\widetilde{L}}),
\]
which will be a consequence of the following two lemmas:
\begin{lem}
There is an isomorphism $C_{L}'\xrightarrow{\rho}C_{\widetilde{L}}'$ such that $\rho\circ i_L=i_{\widetilde{L}}\circ\rho$.
\end{lem}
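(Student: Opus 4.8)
The plan is to take for $\rho$ the restriction to $C_L'$ of the classical residuation involution of $X^{(2)}$ associated with the plane quartic model of $X$. Since $X$ is non-hyperelliptic of genus $3$, we have $\omega_X=\cO_X(1)$ and $h^0(\cO_X(p+q))=1$ for every effective divisor $p+q$ of degree $2$, whence $h^0(\omega_X(-p-q))=1$ by Riemann--Roch. Thus there is a well-defined morphism $j\colon X^{(2)}\to X^{(2)}$ sending $p+q$ to the unique divisor $\overline{pq}\in|\omega_X(-p-q)|$, namely the residual intersection with $X$ of the line through $p$ and $q$ (the tangent line when $p=q$); one has $j^2=\mathrm{id}$ since $p+q$ and $\overline{pq}$ lie on a common line.

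First I would verify that $j$ sends $C_L'$ into $C_{\widetilde{L}}'$. Let $p+q\in C_L'$ and put $r+s:=i_L(p+q)$, so that $p+q+r+s\in|L|$; writing $a+b=\overline{pq}$ and $c+d=\overline{rs}$, one has $\cO_X(a+b)=\omega_X(-p-q)$ and $\cO_X(c+d)=\omega_X(-r-s)$, hence $\cO_X(a+b+c+d)=\omega_X^2\otimes L^{-1}=\widetilde{L}$. Therefore $a+b$ is contained in the divisor $a+b+c+d\in|\widetilde{L}|$, i.e.\ $j(p+q)=a+b\in C_{\widetilde{L}}'$. Since $\omega_X^2\otimes\widetilde{L}^{-1}=L$, the identical computation shows that $j$ sends $C_{\widetilde{L}}'$ into $C_L'$; as $j^2=\mathrm{id}$, the induced maps $\rho:=j|_{C_L'}\colon C_L'\to C_{\widetilde{L}}'$ and $j|_{C_{\widetilde{L}}'}\colon C_{\widetilde{L}}'\to C_L'$ are mutually inverse morphisms, so $\rho$ is an isomorphism. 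Finally, the relation $\rho\circ i_L=i_{\widetilde{L}}\circ\rho$ drops out of the same bookkeeping: with the notation above, $\rho(i_L(p+q))=\rho(r+s)=c+d$, whereas $i_{\widetilde{L}}(\rho(p+q))=i_{\widetilde{L}}(a+b)$ is the residual of $a+b$ inside the element of $|\widetilde{L}|$ containing it, which is exactly $c+d$ because $a+b+c+d\in|\widetilde{L}|$.

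I do not expect a serious obstacle. The two points that need care are, first, that $j$ is a genuine morphism on all of $X^{(2)}$ --- this is where non-hyperellipticity is used, and along the diagonal or where the line meets $X$ non-transversally it is merely a matter of reading intersection multiplicities correctly --- and, second, that $C_{\widetilde{L}}'$ is again a smooth curve of genus $7$, i.e.\ that $\widetilde{L}$ is again non-special in Donagi's sense; this holds because $|L|$ contains a divisor of the form $2e$ with $e\in X^{(2)}$ if and only if $|\widetilde{L}|$ contains $2\,\overline{e}$, so the involution $\tau$ preserves the open set $\Gtetrop$ to which we have restricted. Granting these routine verifications, the argument above proves the lemma.
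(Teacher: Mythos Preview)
Your proof is correct and follows essentially the same approach as the paper: both define $\rho$ as the residual-intersection involution $p+q\mapsto\overline{pq}$ on $X^{(2)}$, verify that $\rho(p+q)+\rho(i_L(p+q))\in|\widetilde{L}|$ (whence $\rho$ lands in $C_{\widetilde{L}}'$ and intertwines the involutions), and conclude that $\rho$ is an isomorphism because it is its own inverse. Your additional remarks about $j$ being a morphism and $\tau$ preserving $\Gtetrop$ are correct side observations not needed for the lemma itself.
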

\begin{proof}
For a point $p+q\in C_L'$, we let $\rho(p+q)\in X^{(2)}$ be the residual intersection of $X$ with the line $pq$ (this line being the tangent line to $X$ at $p$, if $p=q$).
Writing $r+s=i_L(p+q)$, then the eight points obtained by intersection of $X$ with the lines $pq$ and $rs$ give a divisor in $|\omega_X^2|$.
This implies $|\rho(p+q)+\rho(r+s)|\in|\widetilde{L}|$, which gives $\rho(p+q)\in C_{\widetilde{L}}'$ (i.e., $\rho$ is well-defined) and $i_{\widetilde{L}}(\rho(p+q))=\rho(r+s)=\rho(i_L(p+q))$.

To finish the proof, simply note that $\rho$ is an isomorphism since it has an obvious inverse.
\end{proof}

It follows that $(C_L,\eta_L)=(C_{\widetilde{L}},\eta_{\widetilde{L}})$ as elements of $\cR_4$.
To finish the proof of $\sigma(R^{-1}(X,L))=R^{-1}(\tau(X,L))$, we only need to check that the isomorphism $C_L\to C_{\widetilde{L}}$ induced by $\rho$ (that we denote by $\rho$ as well, abusing notation) sends $\omega_{C_L}\otimes M_L^{-1}$ to $M_{\widetilde{L}}$.

\begin{lem}\label{g13}
$\rho_*(\omega_{C_L}\otimes M_L^{-1})=M_{\widetilde{L}}.$
\end{lem}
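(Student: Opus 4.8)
The plan is to identify both $\omega_{C_L}\otimes M_L^{-1}$ and $M_{\widetilde L}$ (pulled back to $C_L'\cong C_{\widetilde L}'$) as explicit line bundles described in terms of the geometry on the quartic plane curve $X$, and to check that they agree. First I would recall how the $g^1_3$'s $M_L$ and $M_{\widetilde L}$ are read off from Recillas' construction: the map $C_L'\to X^{(2)}$, $p+q\mapsto p+q$, realizes $C_L'$ as the curve of divisors in $|L|$ containing a prescribed "half", and the two trigonal pencils on the quotient $C_L=C_L'/\langle i_L\rangle$ come from the two natural degree-$2$ maps $C_L\to\mathbb P^1$. Concretely, $M_L$ is the pencil whose fibers are the pairs of points of $C_L'$ lying over the same point of $\mathbb P^1=|L|$; equivalently, a point $x\in\mathbb P^1$ determines a divisor $p+q+r+s\in|L|$, and the fiber of $C_L\to\mathbb P^1$ over $x$ is $\{p+q,\,r+s\}$, giving the base-point-free $M_L$ of degree $3$.

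Next I would track how $\rho$ transports this data. By the previous lemma $\rho(p+q)$ is the residual of $X$ with the line $pq$; so if $p+q+r+s\in|L|$ then $\rho(p+q)+\rho(r+s)\in|\widetilde L|$, and the fiber of $C_{\widetilde L}\to\mathbb P^1=|\widetilde L|$ through $\rho(p+q)$ is $\{\rho(p+q),\rho(r+s)\}$. Hence $\rho_* M_L$ is the pencil on $C_{\widetilde L}$ obtained by sending the $\mathbb P^1=|L|$ isomorphically to $\mathbb P^1=|\widetilde L|$ via the bijection $\{p+q+r+s\in|L|\}\leftrightarrow\{\rho(p+q)+\rho(r+s)\in|\widetilde L|\}$. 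The point is that this is \emph{not} the pencil $M_{\widetilde L}$: the divisor classes are the same degree, but $M_{\widetilde L}$ is the pencil on $C_{\widetilde L}$ coming from $|\widetilde L|$ grouped the \emph{other} way, i.e. the one whose fiber over a divisor $a+b+c+d\in|\widetilde L|$ consists of the pair of residuals. Since on the genus-$4$ curve $C_L$ the two trigonal pencils $M$ and $\omega_{C_L}\otimes M^{-1}$ are exactly the two ways of organizing an étale-quotient into a $\mathbb P^1$, and $\rho$ identifies the "$L$-grouping on the $C_L$ side" with the "$\widetilde L$-grouping on the $C_{\widetilde L}$ side", I expect $\rho_*(\omega_{C_L}\otimes M_L^{-1})$ to be the other pencil on $C_{\widetilde L}$, namely $M_{\widetilde L}$.

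To make this rigorous I would argue on $C_L'$. There the Abel--Prym picture gives $\omega_{C_L}\otimes M_L^{-1}$ pulled back as a line bundle expressible through the two point-maps $C_L'\rightrightarrows X^{(2)}$, $p+q\mapsto p+q$ and $p+q\mapsto i_L(p+q)$; since $\sum(p+q)+\sum i_L(p+q)\sim |L|$-fibers, the residual/sum relation on $X$ turns $M_L+\text{(conjugate of }M_L) = $ the pullback of $\mathcal O_{\mathbb P^1}(1)$ from the $|L|$-side, while $\omega_{C_L}$ pulls back to the sum of the two hyperelliptic-type classes; the identity $\rho^*M_{\widetilde L}=\rho^*(\omega_{C_L}\otimes M_L^{-1})$ then follows from the compatibility $\rho\circ i_L=i_{\widetilde L}\circ\rho$ together with the observation that $\rho$ swaps the two projections $C_L'\to X^{(2)}$ (the $p+q$-projection and the $i_L$-projection) with the two projections $C_{\widetilde L}'\to X^{(2)}$. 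Equivalently, and perhaps more cleanly, I would use that $R^{-1}$ is a bijection and that $\sigma$ is the unique nontrivial deck transformation of $\varphi$: once we know $(C_L,\eta_L)=(C_{\widetilde L},\eta_{\widetilde L})$ and that $\tau$ is nontrivial on the relevant open set, $R^{-1}\circ\tau$ must be the nontrivial deck transformation of $\varphi$ over that point, hence equals $\sigma\circ R^{-1}$; so the only thing to rule out is $M_{\widetilde L}=\rho_*M_L$, i.e. that $\tau$ does not fix $(X,L)$.

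The main obstacle will be pinning down, with correct bookkeeping of base points and the twist by $\omega_X$, that $\rho_*M_L$ and $M_{\widetilde L}$ are genuinely the two \emph{distinct} $g^1_3$'s rather than accidentally equal — in other words, checking that $\tau(X,L)\neq(X,L)$ generically and that $\rho$ really interchanges the roles of "divisor in the linear system" and "residual pair". I would handle this by a dimension/genericity argument: $\tau$ has fixed locus of positive codimension in $\mathcal G^1_4(X)$ (it fixes only series inside $|\omega_X|$, plus the exceptional divisor), while our open set $U$ avoids exactly that locus; combined with the deck-transformation rigidity above, this forces $\rho_*(\omega_{C_L}\otimes M_L^{-1})=M_{\widetilde L}$ and completes the proof.
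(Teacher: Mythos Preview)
Your core idea---reducing to $\rho_*M_L\neq M_{\widetilde L}$ and then observing that $\rho_*M_L=M_{\widetilde L}$ would give, via $\rho$, an isomorphism $(C_L,\eta_L,M_L)\cong(C_{\widetilde L},\eta_{\widetilde L},M_{\widetilde L})$ and hence $(X,L)=(X,\widetilde L)$ by injectivity of $R$---is valid and genuinely different from the paper's approach. The paper makes the same initial reduction (there are at most two $g^1_3$'s on the non-hyperelliptic genus-$4$ curve $C_L$, so it suffices to rule out $\rho_*M_L=M_{\widetilde L}$), but then argues by direct computation: it writes down an explicit divisor representing $M_L$ from a choice $D=p_1+p_2+p_3+p_4\in|L|$, pushes it forward by $\rho$ to the residual points $a_{ij}+b_{ij}$ on the secant lines $\overline{p_ip_j}$, and shows that $\rho_*M_L=M_{\widetilde L}$ would force equalities in $X^{(4)}$ that make $p_1,\ldots,p_4$ collinear, contradicting $L\neq\omega_X$. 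Your moduli-theoretic route is cleaner and avoids this computation; the paper's route has the minor advantage of working for every $(X,L)\in U$ rather than only generic ones.

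There is, however, a real gap in your execution. Your description of the fixed locus of $\tau$ in $\cG^1_4(X)$ as ``series inside $|\omega_X|$, plus the exceptional divisor'' is incomplete: it misses the $2$-torsion translates $L=\omega_X\otimes\eta$ with $\eta\in JX_2\setminus\{\cO_X\}$ (the piece (b3) in the paper's list), which satisfy $\omega_X^2\otimes L^{-1}=L$ and yet have $L\neq\omega_X$. It also misses the locus (b4) coming from nontrivial automorphisms of $X$. Both of these meet the open set $U$ as the paper defines it, so your claim that ``our open set $U$ avoids exactly that locus'' is false. The fix is easy---shrink $U$ further to exclude these positive-codimension loci, which is harmless for proving the proposition on an open set---but as written the argument does not go through. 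Your first two paragraphs, tracking fibers of $C_L\to\bP^1$, are also not correct as stated: the fiber of $C_L\to|L|$ over $p_1+p_2+p_3+p_4$ consists of the \emph{three} partitions into pairs, not the single pair $\{p+q,r+s\}$; fortunately the deck-transformation argument you end with does not rely on this.
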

\begin{proof}
Since $C_L$ (hence $C_{\widetilde{L}}$) is a non-hyperelliptic curve of genus $4$, by the discussion at the beginning of this subsection it turns out that $C_L$ has at most two $g^1_3$ (namely $M_L$ and $\omega_{C_L}\otimes M_L^{-1}$).
Therefore, it suffices to check that $\rho_*M_L\neq M_{\widetilde{L}}$.

Take $D=p_1+p_2+p_3+p_4\in|L|$. Then, $M_L$ is the line bundle on $C_L$ represented by the divisor
\[
\{p_1+p_2,p_3+p_4\}+\{p_1+p_3,p_2+p_4\}+\{p_1+p_4,p_2+p_3\}
\]
and therefore $\rho_*M_L$ on $C_{\widetilde{L}}$ is represented by the divisor
\[
\{a_{12}+b_{12},a_{34}+b_{34}\}+\{a_{13}+b_{13},a_{24}+b_{24}\}+\{a_{14}+b_{14},a_{23}+b_{23}\}
\]
where for $i,j\in\{1,2,3,4\}$ the points $a_{ij},b_{ij}$ are the residual intersection of $X$ with the line $p_ip_j$.
We may take $D$ with $p_1,p_2,p_3,p_4$ distinct, and such that for every $i\neq j$ we have $a_{ij}\neq b_{ij}$.

If the equality $\rho_*M_L=M_{\widetilde{L}}$ were true, then we would have equalities in $X^{(4)}$
\[
a_{12}+b_{12}+a_{34}+b_{34}=a_{13}+b_{13}+a_{24}+b_{24}=a_{14}+b_{14}+a_{23}+b_{23}
\]
with this divisor representing the line bundle $\widetilde{L}$.
But these equalities are easily seen to imply that the points $p_1,p_2,p_3,p_4$ are collinear, which is a contradiction since $L\neq\omega_X$.
\end{proof}

This finishes the proof of \autoref{inv_commuten}.
As a consequence of it, we deduce that $\tau$ leaves invariant the image of $R$, and the fixed points of the two involutions correspond by $R$.

First, let us study the fixed points of $\sigma $.
If $(C,\eta,M)\in \RGtri$ with $C$ non-hyperelliptic, then $M\cong \omega_C\otimes M^{-1}$ if and only if $M$ is the unique $g^1_3$ on $C$, namely $C\in \cT_4$ and $M$ is the semicanonical pencil of $C$.
Therefore, the locus of fixed points of $\sigma$ consists of two pieces:
\begin{enumerate}
 \item [\mylabel{a1}{(a1)}] Triples $(C,\eta,M)\in\RGtri$ with $(C,\eta)\in(\cT^e_4\cup\cT^o_4)\setminus\cR\cH_4$ and $M$ a semicanonical pencil on $C$.
 
 \item [\mylabel{a2}{(a2)}] The set of $(C,\eta,M)\in\RGtri$ with $(C,\eta)\notin\cT^e_4\cup\cT^o_4$ having a nontrivial automorphism exchanging the two $g^1_3$'s on $C$.

\end{enumerate}

The locus of fixed points of $\tau$ is formed by the following pieces: 

\begin{enumerate}
 \item [\mylabel{b1}{(b1)}] The $\mathbb P^2$-bundle on $\cM_3$ given by the union of the exceptional divisors in $\cG^1_4(X)$ moving $X$, that is: $\underset{X\in \cM_3}{\bigcup}\left(\vert\omega_X\vert^*\setminus X\right)/\Aut(X)$.
 \item [\mylabel{b2}{(b2)}] Pairs $(X,L)\in\Gtetra$ with $X$ hyperelliptic.
 Indeed, the hyperelliptic involution on $X$ exchanges any $L\in\Pic^4(X)$ with $\omega_X^2\otimes L^{-1}$.
 
 \item [\mylabel{b3}{(b3)}] The set of pairs $(X,\omega_X\otimes\eta)$ with $X\in\cM_3$ and $\eta\in JX_2\setminus\{\cO_X\}$. This set is naturally identified with $\cR_3$.
 \item [\mylabel{b4}{(b4)}] The (closure of) the set of pairs $(X,L)\in\Gtetra$ with $X$ non-hyperelliptic, having a nontrivial automorphism sending $L$ to $\omega_X^2\otimes L^{-1}$.
\end{enumerate}


\begin{rem}
The piece \ref{b3} corresponds, under $R^{-1}$, to the irreducible component of \ref{a2} formed by covers of bielliptic curves of genus $4$.

Indeed, consider $(X,L)$ with $X$ a quartic plane curve and $L=\omega_X\otimes\eta$, $\eta\in JX_2\setminus\{\cO_X\}$.
We can express $L=\theta_1\otimes\theta_2$ for two distinct odd theta-characteristics $\theta_1$ and $\theta_2$;
namely, $|L|$ has a divisor given by the contact points of two distinct bitangent lines.
Moreover, using the theory of syzygetic triads (\cite[Section~5.4.1]{dolg}) it is easy to check that $|L|$ contains exactly six such ``distinguished'' divisors (i.e., formed by contact points of two bitangent lines).

Then, the curves $C_L$ and $C_{\widetilde{L}}$ are equal by definition.
Moreover, the involution $\rho:C_L\to C_L$ has exactly six fixed points, lying over the six distinguished divisors of $|L|$.
It follows that $C_L$ is bielliptic, and the bielliptic involution $\rho$ exchanges the two $g^1_3$'s on $C_L$ by \autoref{g13}.

Conversely, Recillas' trigonal construction applied to a cover of a bielliptic curve is well known to give an element of \ref{b3}, see \cite[Section~3]{dolg2}.
\end{rem}

\vspace{0.15mm}

Keeping all this in mind, we are now ready to prove that $\cP_4\, \vert_{\cT^o_4}$ is surjective, and show that its fiber over a general Jacobian $JX\in\cA_3$ is the complement in the projective plane of the union of the canonical model of $X$ and the $28$ lines that are bitangent to it.
Moreover, we conclude that $\cT^0_4$ is irreducible and rationally connected.

\begin{proof}[Proof of \autoref{mainB}.\eqref{mainB:g4}]
We will first prove that $\cP_4\, \vert_{\cT^o_4}$ is dominant, by describing the fiber of a general Jacobian $JX\in\cA_3$ (in particular, showing its non-emptiness).
To this end, we take a non-hyperelliptic curve $X\in\cM_3$ without automorphisms, and denote $\cG^{1,ns}_4(X)=\psi^{-1}(X)\cap\Gtetrop$. 
That is, $\cG^{1,ns}_4(X)$ parametrizes $g^1_4$ linear series on $X$ with no divisor of the form $2p+2q$ or $4p$.

Observe that, since the whole fiber $\cP_4^{-1}(JX)$ is contained in $\cR_4\setminus\cR\cH_4$, according to Recillas' diagram, the fiber $\cP_4\, \vert_{\cT_4^o}^{-1}(JX)$ equals $\varphi\big(R^{-1}(cG^{1,ns}_4(X))\big)\cap\cT^o_4$.
The latter is isomorphic to $R^{-1}\big(\cG^{1,ns}_4(X)\big)\cap\varphi^{-1}(\cT^o_4)$, since the restriction $\varphi\, \vert_{\varphi^{-1}(\cT^o_4)}$ is an isomorphism;
note that the intersection $R^{-1}\big(\cG^{1,ns}_4(X)\big)\cap\varphi^{-1}(\cT^o_4)$ lies in the piece \ref{a1} of the locus of fixed points of $\sigma$.

On the other hand, by our assumptions on $X$, the intersection of $\cG^{1,ns}_4(X)$ with the locus of fixed points of $\tau$ consists of a $2$-dimensional irreducible component (intersection with the piece \ref{b1}) and finitely many points (intersection with the piece \ref{b3}).
Therefore, the intersection of $R^{-1}\big(\cG^{1,ns}_4(X)\big)$ with the locus of fixed points of $\sigma$ consists of finitely many points of the piece \ref{a2} and a $2$-dimensional component.

We claim that this $2$-dimensional component must be $R^{-1}\big(\cG^{1,ns}_4(X)\big)\cap\varphi^{-1}(\cT^o_4)$. 
Indeed, if this were not the case then the piece \ref{b1} would correspond to \ref{a2};
hence \ref{a2} would be $8$-dimensional, which is absurd since the locus of (non-hyperelliptic) curves of genus $4$ with automorphisms is well known to have lower dimension (see \cite{corn}).

All in all, we obtain that the fiber is isomorphic to the intersection of $\cG^{1,ns}_4(X)$ with the locus \ref{b1} of the fixed locus of $\tau$.
This intersection is the set of all the non-complete, base-point-free $g^1_4$ on $X$ containing no divisor of the form $2p+2q$ or $4p$.
Such a $g^1_4$ is defined by the pencil of lines through a fixed point of $\bP^2$, outside the curve $X$ and lying in no bitangent line to $X$.

Now we proceed to prove that the map $\cP_4\, \vert_{\cT^o_4}:\cT_4^o\to\cA_3$ is not only dominant, but also surjective.
For this, note that we have shown that $\varphi^{-1}(\cT^o_4)$ is mapped via $R$ to the locus \ref{b1};
it follows that Jacobians $JX\in\cA_3$ of non-hyperelliptic curves $X$ with automorphisms lie in the image of $\cP_4\, \vert_{\cT^o_4}$ as well. 
In addition, every element of $\theta_{null}$ (i.e., a hyperelliptic Jacobian or product of Jacobians in $\cA_3$) can be obtained as the Prym variety of a cover in $\cR\cH_4\subset\cT^o_4$;
this follows from Mumford's description of Prym varieties of covers of hyperelliptic curves (\cite[Page~346]{mu}), that we already used in \autoref{sec:Te}.

Therefore, we have proved that $\cP_4\, \vert_{\cT^o_4}:\cT_4^o\to\cA_3$ is surjective, with all the fibers of elements in $\cA_3\setminus\theta_{null}$ being irreducible of the same dimension.
Moreover, since $\cP_4^{-1}(\theta_{null})=\cT^e_4$ by \autoref{thetanull2}.\eqref{thetanull2:from3}, we have $\cP_4\, \vert_{\cT^o_4}^{-1}(\theta_{null})=\cT^o_4\cap\cT^e_4=\cR\cH_4$.

Thus if $\cT^o_4$ were not irreducible, it would have $\cR\cH_4$ as an irreducible component, contradicting the equidimensionality of $\cT^o_4$.

Finally, the rational connectedness of $\cT^o_4$ follows again from the results in \cite{ghs}.
\end{proof}


We end this section by showing that the fiber $\cP_4^{-1}(JX)\subset\cT^e_4$ of a general hyperelliptic Jacobian $JX\in\theta_{null}$ is birationally equivalent to its Kummer variety.
As a consequence, we get that $\cT^e_4$ is irreducible.

\begin{proof}[Proof of \autoref{mainA}.\eqref{mainA:fiber}]
Take a general hyperelliptic curve $X\in\cH_3$ (in particular, having the hyperelliptic involution as its only nontrivial automorphism).
The intersection $\cP_4^{-1}(JX)\cap\cR\cH_4$ can be described following Mumford's trick for covers of hyperelliptic curves, as we did in the case of genus $3$: this intersection is the complement in $\bP^2$ of the union of eight lines and a conic.

Now, we proceed to describe the ``non-hyperelliptic'' part of the fiber $\cP_4^{-1}(JX)$.
As usual, denote by $\cG^{1,ns}_4(X)=\psi^{-1}(X)\cap\Gtetrop$; due to the action of the hyperelliptic involution on $\cG^1_4(X)$, $\cG^{1,ns}_4(X)$ is birationally equivalent to the Kummer variety of $JX$.

According to the commutative diagram given by Recillas' construction, $\cP_4^{-1}(JX)\setminus\cR\cH_4$ is $\varphi(R^{-1}(\cG^{1,ns}_4(X)))$, which is isomorphic to $R^{-1}(\cG^{1,ns}_4(X))$ since the restriction $\varphi\, \vert_{\varphi^{-1}(\cT^e_4)}$ is an isomorphism.
It follows that $\cP_4^{-1}(JX)\setminus\cR\cH_4$ is birational to the Kummer variety of $JX$.

Finally, we point out that the whole fiber $\cP_4^{-1}(JX)$ is irreducible.
This follows from our study of $\cP_4^{-1}(JX)\setminus\cR\cH_4$ and $\cP_4^{-1}(JX)\cap\cR\cH_4$, as well as from the equidimensionality of $\cT^e_4$.
\end{proof}




\section{Genus 5 and cubic threefolds}\label{sec:genus5}

Similarly to the cases $g=3$ and $g=4$, the behavior of the Prym map on $\cT^o_5$ is quite different from its behavior on $\cT^e_5$;
indeed, $\cT^o_5$ dominates $\cA_4$ as already observed by Izadi in \cite[Proof of Theorem~6.14]{iz}.
In this section we first give a brief different proof of this fact, by means of the cohomology classes of $\cT^o_5$ and $\cT^e_5$ (\autoref{To5domin});
then we study in more detail the generic fiber of $\tcP_5|_{\cT^o_5}$, whose geometry reveals enumerative properties of cubic threefolds.

We start by recalling Donagi's description \cite[Section~5]{do_fibres} of the generic fiber of the (proper, surjective) Prym map $\tcP_5:\tcR_5\to\cA_4$.
There is a birational map
\[
\kappa:\cA_4\dasharrow\cR\cC^+
\]
where $\cR\cC^+$ denotes the moduli space of pairs $(V,\delta)$ with $V\subset\bP^4$ a smooth cubic threefold and $\delta\in JV_2$ an even $2$-torsion point (i.e., $\delta\notin\Theta_V$ for the canonical choice of the theta divisor $\Theta_V\subset JV$).
Izadi \cite{iz} explicitly describes an open subset of $\cA_4$ on which $\kappa$ is an isomorphism.

Then the fiber of $\kappa\circ\tcP_5$ over a generic $(V,\delta)$ is isomorphic to the double \'etale cover $\widetilde{F(V)}$ of the Fano surface of lines $F(V)$ defined by $\delta$ (recall that $\Pic^0(F(V))\cong JV$).

\begin{prop}\label{To5domin}
The restricted Prym map $\tcP_5|_{\cT^o_5}:\cT^o_5\to\cA_4$ is dominant.
\end{prop}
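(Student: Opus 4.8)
The plan is to work with Beauville's proper extension $\tcP_5:\tcR_5\to\cA_4$ and with the class of $\cT^o_5$ in $\Pic(\ocR_5)_\bQ$ computed in \autoref{class}. Since $\tcP_5$ is proper and surjective, $\tcP_5|_{\cT^o_5}$ is dominant if and only if its general fibre is nonempty; and by Donagi's description recalled above, the general fibre of $\tcP_5$ is the connected proper surface $\widetilde{F(V)}$, for $(V,\delta)\in\cR\cC^+$ general. Hence it suffices to show $\widetilde{F(V)}\cap\cT^o_5\neq\emptyset$ for such $(V,\delta)$. If $\widetilde{F(V)}\subset\cT^o_5$ this is clear; otherwise $\cT^o_5$ meets $\widetilde{F(V)}$ in an effective divisor, and it is enough to exhibit a curve $\gamma\subset\widetilde{F(V)}$ with $[\cT^o_5]\cdot\gamma\neq0$, since this forces that effective divisor to be nonzero.

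Two reductions make the restriction $[\cT^o_5]|_{\widetilde{F(V)}}$ tractable. First, $\lambda$ is the pullback under $\tcP_5$ of the Hodge class on $\cA_4$, so $\lambda|_{\widetilde{F(V)}}=0$ as $\widetilde{F(V)}$ is a fibre of $\tcP_5$; by \autoref{class} this means $[\cT^o_5]|_{\widetilde{F(V)}}=\bigl(-8\,\delta_0'-16\,\delta_0''-15\,\delta_0^{ram}-\cdots\bigr)\big|_{\widetilde{F(V)}}$ is supported on the boundary. Second, $\widetilde{F(V)}$ maps to a general point of $\cA_4$, which lies off $\theta_{null}$, while $\cT^e_5=\tcP_5^{-1}(\theta_{null})$ by Beauville \cite{be_invent}; therefore $\widetilde{F(V)}\cap\cT^e_5=\emptyset$, so $[\cT^e_5]|_{\widetilde{F(V)}}=0$, which combined with $\lambda|_{\widetilde{F(V)}}=0$ and the formula for $[\cT^e_5]$ gives the boundary relation $\bigl(8\,\delta_0'+17\,\delta_0^{ram}+\cdots\bigr)\big|_{\widetilde{F(V)}}=0$. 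Eliminating $\delta_0'$ with this relation, $[\cT^o_5]|_{\widetilde{F(V)}}$ is expressed only through $\delta_0''$, $\delta_0^{ram}$ and the reducible-curve boundary classes.

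It then remains to choose a convenient curve $\gamma\subset\widetilde{F(V)}$ and evaluate these boundary classes on it. A natural candidate is the pullback to $\widetilde{F(V)}$ of the incidence curve $\{\,l\in F(V)\mid l\cap l_0\neq\emptyset\,\}$ of a general line $l_0\in F(V)$, whose numerical class on $F(V)$ is classical. One then performs a degeneration analysis of Donagi's construction along $\gamma$: for which lines $l$ on this curve is the associated genus-$5$ Prym curve $(C_l,\eta_l)$ singular, and to which Beauville admissible-cover stratum ($\delta_0'$, $\delta_0''$, $\delta_0^{ram}$, or a reducible one) does the corresponding limit belong? Inserting the resulting counts into the expression above should yield a nonzero number, proving the proposition. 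I expect this degeneration analysis to be the main obstacle: it requires identifying the special lines of $V$ (lines of the second type, lines contained in special tangent planes, and the like) for which the cover degenerates and reading off the appropriate boundary type. It is a coarser version of the analysis carried out later to establish \autoref{mainC}, where the relevant curve $\Gamma\subset F(V)$ is shown to be numerically equivalent to $8K_{F(V)}$, hence nonempty since $F(V)$ is of general type; here only enough of this information to rule out vanishing is required.
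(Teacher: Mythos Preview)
Your overall strategy --- restrict the class $[\cT^o_5]$ to a general fibre $\widetilde{F(V)}$ and use the vanishing of $[\cT^e_5]$ there to obtain a relation --- is exactly the paper's idea. But the central step of your reduction is mistaken, and this undermines the whole computation.

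The claim that $\lambda$ is the pullback under $\tcP_5$ of the Hodge class on $\cA_4$, and hence that $\lambda|_{\widetilde{F(V)}}=0$, is false. The class $\lambda\in\Pic(\ocR_5)_\bQ$ is the Hodge class of the universal family of genus-$5$ \emph{curves} (pulled back from $\ocM_5$), not of the universal Prym variety. These differ: for instance in genus $6$ the paper quotes $\ocP^*L=\lambda-\tfrac14\delta_0^{ram}$ from \cite{gsm}. More directly, the curves $C$ parametrised by $\widetilde{F(V)}$ genuinely vary in $\cM_5$, so their Hodge structures vary and $\iota^*\lambda\neq0$. If your claim $\iota^*\lambda=0$ were correct, then combined with the paper's easy observation that $\iota^*\delta_0'=\iota^*\delta_0''=\iota^*\delta_i=\iota^*\delta_{g-i}=\iota^*\delta_{i:g-i}=0$ (the general fibre misses all these boundary strata for elementary reasons: their Pryms are decomposable, or Jacobians, or the covers are non-admissible), the relation $\iota^*[\cT^e_5]=0$ would force $\iota^*\delta_0^{ram}=0$ as well, and then $\iota^*[\cT^o_5]=0$ --- the opposite of what you want.

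The paper's actual argument is shorter than your proposed one and requires no test curve. One first argues geometrically that all boundary classes except $\delta_0^{ram}$ restrict to zero on $\widetilde{F(V)}$. Then $0=\iota^*[\cT^e_5]=68\,\iota^*\lambda-17\,\iota^*\delta_0^{ram}$ gives $\iota^*\delta_0^{ram}=4\,\iota^*\lambda$, whence
\[
\iota^*[\cT^o_5]=64\,\iota^*\lambda-15\,\iota^*\delta_0^{ram}=4\,\iota^*\lambda,
\]
which is nonzero precisely because $\iota^*\lambda\neq0$. So the nonvanishing of $\iota^*\lambda$ is the engine of the proof, not an obstacle to be eliminated; your attempt to set it to zero and compensate via a boundary analysis on an incidence curve is both unnecessary and, as explained, would collapse the whole class.
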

\begin{proof}
For $A\in\cA_4$ general, we write $(V,\delta)=\kappa(A)$ and let $\widetilde{F(V)}=(\tcP_5)^{-1}(A)$ be its fiber by $\tcP_5$.
If $\iota:\widetilde{F(V)}\hookrightarrow\tcR_5\hookrightarrow\ocR_5$ denotes the inclusion, then the pullback map
\[
\iota^*:\Pic(\ocR_5)_\bQ\to\Pic(\widetilde{F(V)})_\bQ
\]
annihilates the classes $\delta_0',\delta_0'',\delta_1,\delta_2,\delta_3,\delta_4,\delta_{1:4}$ and $\delta_{2:3}$.

Indeed, the generic element of the divisors $\Delta_i$ ($i=1,\ldots,4$) and $\Delta_{i:5-i}$ ($i=1,2$) is an admissible cover whose Prym variety is a decomposable ppav;
hence $\tcP_5(\Delta_{i}\cap\tcR_5)$ and $\tcP_5(\Delta_{i:5-i}\cap\tcR_5)$ have positive codimension in $\cA_4$, so $\Delta_{i}\cap\tcR_5$ and $\Delta_{i:5-i}\cap\tcR_5$ are disjoint with the general fiber $\widetilde{F(V)}$.
On the other hand $\Delta ''_0$ is formed by Wirtinger covers, so $\tcP_5(\Delta_{0}''\cap\tcR_5)$ is contained in the Jacobian locus, which is a divisor in $\cA_4$;
this again implies that $\iota^*(\delta''_0)=0$. 
Finally, $\iota^*(\delta'_0)=0$ since $\widetilde{F(V)}\subset\tcR_5$ and the general element of $\Delta_0'$ is a non-admissible cover.

Since $\cT^e_5$ is mapped by $\tcP_5$ to $\theta_{null}\subset\cA_4$, the divisor $\cT^e_5$ does not intersect the generic fiber $\widetilde{F(V)}$.
Hence, by \autoref{class}, we obtain
\[
0=\iota^*[\cT^e_5]=68\iota^*\lambda-17\iota^*\delta_0^{ram}
\]
which implies the relation $\iota^*\delta_0^{ram}=4\iota^*\lambda$.

Now, the restriction of $\cT^o_5$ to the fiber $\widetilde{F(V)}$ has cohomology class
\[
\iota^*[\cT^o_5]=64\iota^*\lambda-15\iota^*\delta_0^{ram}=4\iota^*\lambda,
\]
which is clearly nonzero since the Hodge structure cannot remain constant along the (open) subset of $\widetilde{F(V)}$ formed by smooth covers.
Therefore, the restriction of $\cT^o_5$ to the generic fiber is not trivial and $\cT^o_5$ dominates $\cA_4$.
\end{proof}

In the previous proof, observe that the class $\iota^*[\cT^o_5]$ equals $\iota^*\delta_0^{ram}$.
This is consistent with the fact that the involution $j:\widetilde{F(V)}\to\widetilde{F(V)}$ (induced by the double \'etale cover) exchanges $\cT^o_5\cap\widetilde{F(V)}$ and $\Delta_0^{ram}\cap\widetilde{F(V)}$.
In order to understand this, we need to recall the geometric description of $j$ given also by Donagi in \cite[Section~5]{do_fibres}.

Given a smooth cover $(C,\eta)\in\widetilde{F(V)}$, there exists a unique representation $JC=\tcP_6(Q,\nu)$ as the Prym of a cover of a plane quintic $Q$ (the theta-characteristic $\cO_Q(1)\otimes\nu$ on $Q$ being even).
More explicitly, $(Q,\nu)$ is the double cover induced by the involution $-1_{JC}$ on the symmetric curve $W^1_4(C)\subset JC$.

Consider the short exact sequence
\[
0\to\langle\nu\rangle\to \langle\nu\rangle^\perp\to JC_2\to 0
\]
where $\langle\nu\rangle^\perp\subset JQ_2$ is the orthogonal for the Weil pairing on $JQ_2$.
Then the preimage of $\langle\eta\rangle\subset JC_2$ is a totally isotropic subgroup of four elements $0,\nu,\sigma$ and $\sigma\otimes\nu$;
moreover, the theta-characteristics $\cO_Q(1)\otimes\sigma$ and $\cO_Q(1)\otimes\sigma\otimes\nu$ on $Q$ have opposite parities.

Say $\cO_Q(1)\otimes\sigma\otimes\nu$ is even. Then $\tcP_6(Q,\sigma\otimes\nu)$ is the Jacobian of a genus 5 curve $C'$, and $\nu\in\langle\sigma\otimes\nu\rangle^\perp\subset JQ_2$ induces a nonzero element $\eta'\in (JC')_2$;
one has $j(C,\eta)=(C',\eta')$. 

This picture beautifully closes with the observation that $\tcP_6(Q,\sigma)\cong JV$ as ppav and the even $2$-torsion point $\delta\in JV_2$ is induced by $\nu\in \langle\sigma\rangle^\perp\subset JQ_2$.
In particular, the double cover $\widetilde{F(V)}\to F(V)$ sends $(C,\eta)$ to the line $l\in F(V)$ having $Q$ as discriminant curve for the conic bundle structure defined by $l$.

\begin{rem}\label{lambdaizadi}
In \cite[Section~3]{iz}, Izadi gives an alternative realization of the involution $j$.
Given a smooth cover $(\tC,C)=(C,\eta)\in\cR_5$, the theta-dual $T(\tC)=V^2(C,\eta)\subset P^-(C,\eta)$ is a symmetric curve (when properly translated to $P(C,\eta)$).
If $C'$ is the quotient of $T(\tC)$ by $-1$, then the cover $(T(\tC),C')$ corresponds to $(\tC,C)$ under the involution $j$.
\end{rem}

For any smooth cubic threefold $V\subset\bP^4$, we consider the set
\[
\Gamma=\{l\in F(V)\mid \exists\text{ a $2$-plane $\pi$ and a line $r\in F(V)$ with }V\cdot\pi=l+2r\}
\]
parametrizing the lines $l\in F(V)$ whose conic bundle structure has a singular discriminant curve (\cite[Proposition~1.2]{be_jacob}). 
In other words, $\Gamma$ parametrizes presentations of $JV$ as the Prym variety of an (admissible) cover of a singular (plane quintic) curve.

The set $\Gamma$ is well known to have pure dimension $1$ and it is irreducible for a general cubic threefold $V$;
assuming the irreducibility of $\Gamma$ (which will be proven in \autoref{gamma}), let us determine the behavior of the general fiber of $\tcP_5|_{\cT^o_5}$ under the involution $j$ of $\widetilde{F(V)}$.

\begin{prop}\label{involution}
For a generic $(V,\delta)\in\cR\cC^+$, the preimage $\widetilde{\Gamma}\subset\widetilde{F(V)}$ of $\Gamma$ by the double \'etale cover consists of two irreducible components, namely $\cT^o_5\cap\widetilde{F(V)}$ and $\Delta^0_{ram}\cap\widetilde{F(V)}$.
These two components are exchanged by the involution $j$.
\end{prop}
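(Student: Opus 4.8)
The plan is to analyze the preimage $\widetilde{\Gamma}$ by combining the geometric description of the involution $j$ (via the trigonal/conic-bundle detour through plane quintics) with the parity bookkeeping already set up in the excerpt. First I would recall that a point $l\in\Gamma$ corresponds to a presentation $JV=\tcP_6(Q,\sigma)$ in which the discriminant quintic $Q$ acquires a node, i.e. to an admissible cover $(\widetilde Q,Q)$ of a $1$-nodal plane quintic; its two preimages in $\widetilde{F(V)}$ are the two genus-$5$ Prym curves $(C,\eta)$ and $(C',\eta')=j(C,\eta)$ obtained by the construction recalled above, where $JC=\tcP_6(Q,\sigma\otimes\nu)$, $JC'=\tcP_6(Q,\nu)$ and the relevant theta-characteristics on $Q$ have opposite parities. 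The key point is that, over the locus where $Q$ is $1$-nodal, exactly one of the two admissible $\tcP_6$-preimages degenerates so as to force its genus-$5$ cover to lie in $\Delta_0^{ram}$ (the covers of $1$-nodal curves glued at a Weierstrass-type point), while the other is a smooth cover with a semicanonical pencil, i.e. lies in $\cT^o_5$. Concretely I would use the Donagi–Beauville description of how a node of the discriminant curve translates into the degenerate cover on the genus-$5$ side: passing the node through the two theta-characteristics $\cO_Q(1)\otimes\sigma$ and $\cO_Q(1)\otimes\sigma\otimes\nu$, which differ in parity, produces on one sheet a singular (admissible) cover in $\Delta_0^{ram}$ and on the other sheet a smooth cover whose Abel–Prym image meets $\Theta$ in the way dictated by an odd semicanonical pencil (so that $f^*L$ lands in $P^-$, as in Section~\ref{prelim:BN}).

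Next I would promote this pointwise statement to a statement about components. Since $\Gamma$ is pure of dimension $1$ and (by \autoref{gamma}, which I may assume) irreducible for generic $V$, its preimage $\widetilde{\Gamma}$ is either irreducible (a connected double cover of $\Gamma$) or has exactly two components, each mapping isomorphically to $\Gamma$; and $j$ permutes these. To decide which case occurs and to identify the components, I would invoke the cohomology computation already performed in the proof of \autoref{To5domin}: restricting to the fiber $\widetilde{F(V)}$ one has $\iota^*[\cT^o_5]=4\iota^*\lambda=\iota^*\delta_0^{ram}$, and these classes are nonzero. This forces $\cT^o_5\cap\widetilde{F(V)}$ and $\Delta_0^{ram}\cap\widetilde{F(V)}$ to be nonempty effective divisors (curves) on $\widetilde{F(V)}$; by the pointwise analysis each lies in $\widetilde{\Gamma}$ and maps onto $\Gamma$, so $\widetilde{\Gamma}=(\cT^o_5\cap\widetilde{F(V)})\cup(\Delta_0^{ram}\cap\widetilde{F(V)})$, these are its two irreducible components, and $j$ exchanges them because $j$ lifts the identity on $\Gamma\subset F(V)$ while interchanging the two sheets and, by the Donagi description of $j$ recalled above, swaps the ``smooth/semicanonical'' sheet with the ``nodal'' one. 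Finally I would note that the two components are genuinely distinct (so $\widetilde\Gamma$ is not irreducible) precisely because $\cT^o_5$ consists of smooth Prym curves whereas $\Delta_0^{ram}\subset\ocR_5\setminus\cR_5$ parametrizes singular admissible covers — they cannot share a component.

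The main obstacle I expect is the first step: making precise, on the genus-$5$ side, \emph{which} of the two $\tcP_6$-preimages of a $1$-nodal quintic $(Q,\text{theta-char})$ degenerates and \emph{how}. One must show that exactly one of the two parity-distinct theta-characteristics on $Q$ produces, after the $\tcP_6$ construction (i.e. Prym of the conic-bundle cover), an admissible cover of type $\delta_0^{ram}$ on the genus-$5$ curve — this is a local study of the Prym–Torelli/trigonal correspondence at the boundary divisor $\Delta_0^{ram}$, essentially identifying the vanishing cycle picture of Beauville's conic-bundle construction with the Wirtinger/ramified degeneration of covers. The parity flip is what guarantees that the other preimage stays smooth and acquires an \emph{odd} semicanonical pencil (so it lands in $\cT^o_5$ rather than $\cT^e_5$), consistent with $\cT^e_5=\tcP_5^{-1}(\theta_{null})$ not meeting the generic fiber. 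Once this local boundary analysis is in place, the rest is the soft argument with irreducibility of $\Gamma$ and the nonvanishing of $\iota^*[\cT^o_5]$ and $\iota^*\delta_0^{ram}$ already available from \autoref{To5domin}.
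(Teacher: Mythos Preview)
Your overall architecture matches the paper's: use irreducibility of $\Gamma$ to bound $\widetilde\Gamma$ to at most two components, show that $\cT^o_5\cap\widetilde{F(V)}$ and $\Delta_0^{ram}\cap\widetilde{F(V)}$ are both nonempty curves sitting inside $\widetilde\Gamma$, observe they are disjoint (smooth versus singular covers), and conclude. The problem is that you have made the one step you cannot carry out --- the ``local boundary analysis'' deciding, for each $l\in\Gamma$, which preimage lands in which divisor --- into the centerpiece of the argument. This is unnecessary, and the paper bypasses it entirely.

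What the paper does instead is prove the two \emph{inclusions} $\cT^o_5\cap\widetilde{F(V)}\subset\widetilde\Gamma$ and $\Delta_0^{ram}\cap\widetilde{F(V)}\subset\widetilde\Gamma$ directly, and each is a one-line observation. For $(C,\eta)\in\cT^o_5\cap\widetilde{F(V)}$: the semicanonical pencil $L$ on $C$ satisfies $L\cong\omega_C\otimes L^{-1}$, so $L$ is a fixed point of the involution $-1_{JC}$ on $W^1_4(C)$, and hence the quotient quintic $Q=W^1_4(C)/\langle\pm1\rangle$ is singular; thus the image $l\in F(V)$ lies in $\Gamma$. For $(C,\eta)\in\Delta_0^{ram}\cap\widetilde{F(V)}$: here $JC$ is only a semiabelian variety, so the presentation $JC=\tcP_6(Q,\nu)$ must come from a non-admissible cover, forcing $Q$ singular. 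That is the whole content of the ``pointwise analysis''; there is no need to decide \emph{a priori} which parity of theta-characteristic on a nodal $Q$ produces which degeneration.

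For the exchange by $j$, your argument via ``$j$ interchanges the sheets and the components are distinct'' is essentially correct and suffices once the inclusions above are in place. The paper phrases it differently, invoking Izadi's realization of $j$ (\autoref{lambdaizadi}): for smooth $(C,\eta)\in\cT^o_5$, the curve $V^2(C,\eta)=T(\tC)$ is singular by \autoref{V2}, so its quotient $C'$ --- the base of $j(C,\eta)$ --- is singular, i.e. $j(C,\eta)\notin\cR_5$. Either route works; yours is slightly softer but depends on first knowing the two components are distinct and each maps onto $\Gamma$.
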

\begin{proof}
We will see in \autoref{gamma} (note that the first paragraph of its proof is independent of this proposition) that $\Gamma$ is irreducible. 
Hence, $\widetilde{\Gamma}$ has at most two irreducible components.
Thus, for the first statement, it suffices to check that both $\cT^o_5\cap\widetilde{F(V)}$ and $\Delta^0_{ram}\cap\widetilde{F(V)}$ are contained in $\widetilde{\Gamma}$.

On the one hand, if $(C,\eta)\in\cT^o_5\cap\widetilde{F(V)}$ with $C$ smooth, then the associated quintic $Q=W^1_4(C)/\langle\pm1\rangle$ is singular;
indeed, the semicanonical pencil on $C$ is a point of $W^1_4(C)$ fixed by the involution.

On the other hand, given a general element $(C,\eta)\in\Delta^0_{ram}$ the expression of the (semi-abelian) variety $JC$ as a Prym variety necessarily comes from a non-admissible cover $(Q,\sigma)$.
In particular, $Q$ is singular as well.

The fact that $j$ exchanges the components of $\widetilde{\Gamma}$ is nothing but \cite[Lemma~3.14]{iz}.
This is also immediately observed from \autoref{lambdaizadi}, and the fact that for smooth covers $(\tC,C)=(C,\eta)\in\cT^o_5$ the theta-dual $T(\tC)=V^2(C,\eta)$ is singular by \autoref{V2}.
\end{proof}

It follows that the general fiber of $\tcP_5|_{\cT^o_5}:\cT^o_5\to\cA_4$ is a curve dominating $\Gamma$, which concludes the proof of \autoref{mainB}.\eqref{mainB:g5}.

In the rest of this section, we concentrate on the geometry of both curves $\Gamma$ and $\cT^o_5\cap\widetilde{F(V)}$; in particular, \autoref{DesingPrecisa} will provide a more precise description of the partial desingularization of $\Gamma$ appearing in \autoref{mainB}.\eqref{mainB:g5}.
To this end, we consider for any smooth cubic threefold $V\subset\bP^4$ the curve
\[
\Gamma'=\{r\in F(V)\mid \exists\text{ a $2$-plane $\pi$ and a line $l\in F(V)$ with }V\cdot\pi=l+2r\}
\]
formed by \emph{lines of second type}.
In contrast to $\Gamma$, this curve has received considerable attention in the literature.
For instance:
\begin{enumerate}[(1)]
 \item (\cite[Proposition~10.21]{cg}) $\Gamma'$ has pure dimension 1 and, as a divisor in the Fano surface $F(V)$, is linearly equivalent to twice the canonical divisor $K_{F(V)}$.
 
 \item In \cite[Corollary~1.9]{murre}, it is stated that $\Gamma'$ is smooth for every $V$.
 In fact, what Murre's local computations really show is that the singularities of $\Gamma'$ correspond to lines $r\in F(V)$ for which there exists a $2$-plane $\pi$ satisfying $V\cdot\pi=3r$.\footnote{For the interested reader, the linear polynomial $l$ appearing in equation (13) of \cite[Page~167]{murre} not only depends on the variables $u$ and $v$, but also on the variable $x$ (see also \cite{BB22}).}
 
 \noindent An easy count of parameters shows that, for a general $V$, such lines do not exist (namely the curve $\Gamma'$ is smooth).
\end{enumerate}

For every $r\in\Gamma'$, there exist a unique $2$-plane $\pi$ and a unique $l\in F(V)$ such that $V\cdot\pi=l+2r$ (see \cite[Lemma~2.4]{naranjo-ortega}). This naturally defines a surjective morphism $\Gamma'\to\Gamma$.
Even if one would be tempted to think that it defines an isomorphism between $\Gamma'$ and $\Gamma$, this is not the case\footnote{In particular, this fixes an inaccuracy in the original published version of \cite[Proposition~2.6]{naranjo-ortega}, already corrected in the arXiv version \href{https://arxiv.org/abs/1708.06512}{arXiv:1708.06512.v3}.}:

\begin{prop}\label{gamma}
For a general smooth cubic threefold $V$, the following hold:
\begin{enumerate}[\rm (1)]
 \item $\Gamma'$ is smooth and irreducible.
 \item $\Gamma$ is irreducible and singular, with only nodes as singularities.
 \item The map $\Gamma'\to\Gamma$ is birational (i.e., $\Gamma'$ is the normalization of $\Gamma$).
\end{enumerate}
\end{prop}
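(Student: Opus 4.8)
The plan is to prove the three assertions in sequence, exploiting the surjective morphism $\Gamma'\to\Gamma$ together with the detailed analysis of singularities already available for $\Gamma'$. First I would establish (1): the irreducibility of $\Gamma'$ for general $V$. One approach is to show that the incidence variety $\{(V,r)\mid V\text{ smooth cubic},\ r\in\Gamma'(V)\}$ is irreducible and dominates the parameter space of cubics; since $\Gamma'$ has pure dimension $1$ and is smooth for general $V$ (by the corrected statement of \cite{murre}, i.e.\ because lines $r$ with $V\cdot\pi=3r$ do not exist on a general $V$), a monodromy/connectedness argument on this incidence variety gives irreducibility of the general fiber. Alternatively, one can invoke the fact that $\Gamma'$ is a smooth curve in the linear system $|2K_{F(V)}|$ on the (irreducible, minimal, of general type) Fano surface, and for general $V$ a general member of an ample linear system on an irreducible surface is irreducible by Bertini-type arguments — though here $\Gamma'$ is not a general member, so the monodromy argument on the universal family is the safer route.

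Next I would handle the map $\varphi\colon\Gamma'\to\Gamma$ and prove (3), that it is birational. Since for every $r\in\Gamma'$ the plane $\pi$ and line $l$ with $V\cdot\pi=l+2r$ are unique (\cite[Lemma~2.4]{naranjo-ortega}), the assignment $r\mapsto l$ is a well-defined morphism, surjective onto $\Gamma$ by definition of the latter. To see it is generically one-to-one, I would argue that for a general $l\in\Gamma$ there is a \emph{unique} pair $(\pi,r)$ with $V\cdot\pi=l+2r$: a dimension count on the incidence correspondence $\{(l,\pi,r)\}$, or equivalently the observation that a general $l\in\Gamma$ lies on exactly one plane meeting $V$ in $l$ plus a double line, shows the general fiber of $\varphi$ is a single reduced point; combined with $\Gamma'$ smooth this forces $\varphi$ to be the normalization of $\Gamma$. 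Once $\Gamma'$ is known irreducible, irreducibility of $\Gamma$ in (2) is immediate as the image of an irreducible curve.

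For the remaining part of (2) — that the singularities of $\Gamma$ are exactly nodes for general $V$ — I would analyze the fibers of $\varphi$ over its non-injective locus. A point $l\in\Gamma$ is a singular point of $\Gamma$ precisely when either $\varphi^{-1}(l)$ consists of two or more points of $\Gamma'$ (giving, generically, a node where two smooth analytic branches cross) or $\varphi$ ramifies over $l$ (which would give a cusp). I would show that for general $V$: (i) no triple points or worse occur, by a parameter count on the correspondence of pairs $(r_1,r_2)\in\Gamma'\times\Gamma'$ mapping to the same $l$ — this is a codimension condition, and triple coincidences are one more codimension, hence avoided generically; (ii) at each genuine double point the two branches of $\Gamma$ are transverse, by computing the tangent directions coming from the two distinct lines $r_1,r_2$ and showing they are distinct for general $V$; and (iii) $\varphi$ is unramified along $\Gamma'$ for general $V$, i.e.\ no cusps — this is where the smoothness of $\Gamma'$ and a local computation in the style of Murre's (or a semicontinuity argument degenerating a hypothetical cusp) is used. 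The nodes then correspond bijectively to unordered pairs $\{r_1,r_2\}\subset\Gamma'$ with $\varphi(r_1)=\varphi(r_2)$, i.e.\ to lines $l$ lying on two distinct planes cutting out $l$ plus a double line — exactly the configuration appearing in \autoref{mainD}.\eqref{mainD:nodes}.

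The main obstacle I expect is step (2)(ii)–(iii): ruling out cusps and worse tangencies at the double points of $\Gamma$, and more generally controlling the local structure of $\varphi$ precisely enough to conclude "only nodes." Irreducibility (parts (1) and the first half of (2)) and birationality (part (3)) are essentially dimension counts plus standard monodromy, but the node statement requires either an explicit local model of $V$ near such configurations — extending Murre's computations for $\Gamma'$ to the map $\Gamma'\to\Gamma$ — or a careful specialization argument showing that any non-nodal singularity is a strictly positive-codimension phenomenon in the moduli of cubic threefolds. I would likely combine both: a parameter count to bound the codimension of bad loci, and a single explicit example (or the local computation) to certify that the generic singularity is actually a node rather than, say, a tacnode. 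The precise node count $1485$ is then deferred to the enumerative computation in \autoref{mainC}, which uses the numerical class $\Gamma\equiv 8K_{F(V)}$ and the self-intersection of $\Gamma'$ on $F(V)$.
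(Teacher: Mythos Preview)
Your outline is broadly correct in spirit for parts (1) and for the ``only nodes'' half of (2), where the paper indeed combines parameter counts (to exclude triple or coincident preimages) with an explicit Murre-style local computation in coordinates to verify transversality of the two branches at each double point. Two points deserve attention, however.

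For birationality of $\Gamma'\to\Gamma$, your proposed ``dimension count on the incidence correspondence $\{(l,\pi,r)\}$'' does not by itself yield degree one: that correspondence is canonically isomorphic to $\Gamma'$, so all you recover is that the map to $\Gamma$ is generically finite, not that it has a single sheet. The paper closes this gap by a genuinely different, Prym-theoretic argument: the preimages of $l\in\Gamma$ are exactly the nodes of the discriminant plane quintic $Q_l$, and under the identification $\widetilde{F(V)}\subset\tcR_5$ (for a general even $\delta$) a cover $(C,\eta)\in\cT^o_5$ sitting over $l$ has its semicanonical pencils in bijection with those nodes. Since the general curve of $\cT_5$ carries a \emph{unique} semicanonical pencil, the general $Q_l$ has a single node and $\Gamma'\to\Gamma$ is birational. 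Your approach could in principle be completed by showing directly that the map $F(V)\to|\cO_{\bP^2}(5)|$, $l\mapsto Q_l$, meets the discriminant of plane quintics generically transversely along its smooth locus, but this is extra work you have not sketched.

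You also do not argue that $\Gamma$ is actually \emph{singular}. The paper devotes a separate parameter count to this: fixing lines $l,r_1,r_2$ in a suitable configuration, it shows that the cubics $V$ with $V\cdot\pi_i=l+2r_i$ for $i=1,2$ form a $20$-dimensional linear family in $\bP^{34}$, while the projective transformations preserving the configuration have $10$ parameters, so the locus of cubics admitting such a configuration is all of moduli. Without this (or an equivalent existence argument), your ``codimension condition'' for pairs $(r_1,r_2)$ with $\varphi(r_1)=\varphi(r_2)$ could in principle be empty, and the assertion ``$\Gamma$ is singular'' would remain unproved.
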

\begin{proof}
The smoothness of $\Gamma'$ being known by the discussion above, for the irreducibility of $\Gamma'$ (and hence that of $\Gamma$) one argues as in the proof of \cite[Proposition~2.6]{naranjo-ortega}. 
In particular, the irreducibility of $\Gamma$ completes the proof of \autoref{involution}.

In order to prove that $\Gamma'\to\Gamma$ is birational, we need to prove that a general $l\in\Gamma$ has a unique preimage in $\Gamma'$.
Note that the preimages of a line $l\in\Gamma$ correspond to nodes on the discriminant (plane quintic) curve $Q_l$ of the conic bundle structure defined by $l$.

Fix an even $2$-torsion point $\delta\in JV_2$ (such that the pair $(V,\delta)\in\cR\cC^+$ is general), and denote by $\varphi:\widetilde{F(V)}\to F(V)$ the \'etale double cover defined by $\delta$.
If $(C,\eta)\in\tcR_5$ denotes a Prym curve lying in both $\cT^o_5$ and $\varphi^{-1}(l)$, then by \autoref{involution} the nodes of $Q_l$ are also in correspondence with the semicanonical pencils on the curve $C\in\cT_5$.
Since the general curve of $\cT_5$ has a unique semicanonical pencil, the birationality of $\Gamma'\to\Gamma$ follows.


Now we proceed to prove that the curve $\Gamma$ is always singular.
For this, it suffices to check that there exist points of $\Gamma$ with (at least) two preimages in $\Gamma'$. Namely, that there exist lines $l\subset V$ such that there are two distinct $2$-planes $\pi_1,\pi_2$ and lines $r_1,r_2\subset V$ with the property $V\cdot\pi_i=l+2r_i$ ($i=1,2$).

Take a reference system in $\bP^4$ so that $l\cap r_1=[0:0:0:1:0]$, $l\cap r_2=[0:0:0:0:1]$, $[0:1:0:0:0]\in r_1$ and $[0:0:1:0:0]\in r_2$.
Denoting by $x,y,z,u,v$ the homogeneous coordinates in this reference system, a cubic threefold $V$ will satisfy $V\cdot\pi_i=l+2r_i$ ($i=1,2$) if and only if it admits an equation of the form
\[
F(x,y,z,u,v)=x\cdot Q(x,y,z,u,v)+\lambda_{15}y^2z+\lambda_{16}yz^2+\lambda_{17}yzu+\lambda_{18}yzv+\lambda_{19}yv^2+\lambda_{20}zu^2
\]
with $Q$ a quadratic polynomial.
This family of equations forms a 20-dimensional linear variety in the projective space $\bP^{34}$ of cubic equations in five variables (up to constant).

On the other hand, the projective transformations which leave the lines $l,r_1,r_2$ invariant depend on ten projective parameters.
Therefore, the moduli space of cubic threefolds $V$ for which there exist lines $l,r_1,r_2\subset V$ and $2$-planes $\pi_1,\pi_2$ as asserted is $10$-dimensional.
In other words, every smooth cubic threefold $V$ admits such a configuration.

A similar parameter count shows that for a general cubic threefold $V$:
\begin{itemize}[\textbullet]
 \item There are no lines $l\in\Gamma$ admitting three or more preimages in $\Gamma'$.
 \item There are no lines $l\in\Gamma$ admitting two distinct preimages $r_1,r_2\in\Gamma'$ with the property $l\cap r_1=l\cap r_2$.
\end{itemize}

This shows that, for a general cubic threefold $V$, the curve $\Gamma$ is singular and its singular points are of multiplicity $2$.
Hence to finish the proof, it only remains to check that such singular points are ordinary.
We will prove the following: for any singular point $l\in\Gamma$, the tangent directions to $\Gamma'$ at the two preimages $r_1,r_2\in\Gamma'$ of $l$ are mapped to independent directions in the tangent space to $F(V)$ at $l$.
We will use the local analysis of $\Gamma'$ performed by Murre (\cite[Section~1A]{murre}).

According to our discussion, the singularities of $\Gamma'$ correspond to lines $l\subset V$ for which there exist two distinct $2$-planes $\pi_1,\pi_2$ and (disjoint) lines $r_1,r_2\subset V$ such that $V\cdot\pi_i=l+2r_i$ ($i=1,2$). 
Taking coordinates as before, we let
\begin{multline*}
 F(x,y,z,u,v)=\lambda_{0}x^3+\lambda_{1}x^2y+\lambda_{2}x^2z+\lambda_{3}x^2u+\lambda_{4}x^2v+\lambda_{5}xy^2+\lambda_{6}xyz+\lambda_{7}xyu+\lambda_{8}xyv+\lambda_{9}xz^2 \\
 +\lambda_{10}xzu+\lambda_{11}xzv+\lambda_{12}xu^2+\lambda_{13}xuv+\lambda_{14}xv^2+\lambda_{15}y^2z+\lambda_{16}yz^2+\lambda_{17}yzu+\lambda_{18}yzv+\lambda_{19}yv^2+\lambda_{20}zu^2
 \end{multline*}
be the equation defining $V$.
Observe that $\lambda_{19}\neq0$ and $\lambda_{20}\neq0$, otherwise $V$ would contain one of the $2$-planes $\pi_1:x=z=0,\;\pi_2:x=y=0$ and hence $V$ would be singular.

In the Grassmannian $\mathbb{G}(1,4)$ of lines in $\bP^4$, we take local coordinates $x',x'',y',y'',z',z''$ for lines $l'$ around $l$, given by
\[
l'\cap\{v=0\}=[x':y':z':1:0],\;\;l'\cap\{u=0\}=[x'':y'':z'':0:1].
\]

Since $F$ can be written as $F=x\cdot f(x,y,z,u,v)+y\cdot g(x,y,z,u,v)+z\cdot h(x.y,z,u,v)$ with
\begin{align*}
 f(x,y,z,u,v)&=\lambda_{12}u^2+\lambda_{13}uv+\lambda_{14}v^2+\text{terms of lower degree in $u,v$}, \\
 g(x,y,z,u,v)&=\lambda_{19}v^2+\text{terms of lower degree in $u,v$}, \\
 h(x,y,z,u,v)&=\lambda_{20}u^2+\text{terms of lower degree in $u,v$},
\end{align*}
following \cite[Section~1A]{murre} the tangent plane $T_lF(V)$ to $F(V)$ at $l$ is described by the four independent equations
\[
\lambda_{12}x'+\lambda_{20}z'=\lambda_{13}x'+\lambda_{12}x''+\lambda_{20}z''=\lambda_{14}x'+\lambda_{13}x''+\lambda_{19}y'=\lambda_{14}x''+\lambda_{19}y''=0.
\]
Since $\lambda_{19},\lambda_{20}\neq0$, observe that the coordinates $x',x''$ are independent in this tangent plane.

Similarly, we take local coordinates $a',a'',b',b'',c',c''$ for lines $r'$ around $r_1$ in $\mathbb{G}(1,4)$, where
\[
r'\cap\{u=0\}=[a':1:b':0:c'],\;\;r'\cap\{y=0\}=[a'':0:b'':1:c''].
\]
Following again \cite[Section~1A]{murre}, $T_{r_1}F(V)$ is described by the independent equations
\[
\lambda_{5}a'+\lambda_{15}b'=\lambda_{7}a'+\lambda_{5}a''+\lambda_{17}b'+\lambda_{15}b''=\lambda_{12}a'+\lambda_{7}a''+\lambda_{20}b'+\lambda_{17}b''=\lambda_{12}a''+\lambda_{20}b''=0,
\]
which are equivalent to $a'=a''=b'=b''=0$.
Therefore, we may take $c',c''$ as coordinates in the tangent plane $T_{r_1}F(V)$, which is naturally identified with the set of lines contained in the $2$-plane $\pi_1$ (and avoiding the point $[0:0:0:0:1]\in\pi_1$).

Under our assumptions of generality on $V$, the analysis in \cite[Section~1A]{murre} shows that $\Gamma'$ is smooth at $r_1$, with tangent line
\[
T_{r_1}(\Gamma'):(\lambda_7\lambda_{20}-\lambda_{12}\lambda_{17})c'+(\lambda_{12}\lambda_{15}-\lambda_5\lambda_{20})c''=0
\]
(again, $\lambda_7\lambda_{20}-\lambda_{12}\lambda_{17}$ and $\lambda_{12}\lambda_{15}-\lambda_5\lambda_{20}$ are not simultaneously zero by the smoothness of $V$).

Let us assume that $\lambda_7\lambda_{20}-\lambda_{12}\lambda_{17}\neq0$. Given $c''\in\bC$, we denote by $r_{1,c''}\in T_{r_1}(\Gamma')$ the line
\[
r_{1,c''}=\Big[0:1:0:0:\frac{\lambda_5\lambda_{20}-\lambda_{12}\lambda_{15}}{\lambda_7\lambda_{20}-\lambda_{12}\lambda_{17}}c''\Big]\vee[0:0:0:1:c''].
\]

Using the description in \cite[1.3]{murre}, elementary (but tedious) calculations show that the first order deformation $r_{1,c''}$ of $r_1$ along $\Gamma'$ yields a first order deformation of the $2$-plane $\pi_1$ given by
\[
\pi_{1,c''}=r_{1,c''}\vee\Big[\frac{\lambda_{19}\lambda_{20}}{\lambda_7\lambda_{20}-\lambda_{12}\lambda_{17}}c'':0:-\frac{\lambda_{12}\lambda_{19}}{\lambda_7\lambda_{20}-\lambda_{12}\lambda_{17}}c'':0:1\Big],
\]
and thus a first order deformation $l_{c''}$ of $l$ along $\Gamma$ given by the following local coordinates around $l$:
\begin{align*}
 x'(l_{c''})=0,\;\;\;y'(l_{c''})&=-\frac{\lambda_{13}\lambda_{20}}{\lambda_7\lambda_{20}-\lambda_{12}\lambda_{17}}c'',\;\;\;z'(l_{c''})=0,\\
 x''(l_{c''})=\frac{\lambda_{19}\lambda_{20}}{\lambda_7\lambda_{20}-\lambda_{12}\lambda_{17}}c'',\;\;\;y''(l_{c''})&=-\frac{\lambda_{14}\lambda_{20}}{\lambda_7\lambda_{20}-\lambda_{12}\lambda_{17}}c'',\;\;\;z''(l_{c''})=-\frac{\lambda_{12}\lambda_{19}}{\lambda_7\lambda_{20}-\lambda_{12}\lambda_{17}}c''.
\end{align*}

In other words, the first order deformation of $r_1$ along $\Gamma'$ defines the tangent direction $x'=0$ to $\Gamma$ at $l$.
A similar analysis shows that the first order deformation of $r_2$ along $\Gamma'$ induces the tangent direction $x''=0$ to $\Gamma$ at $l$.
Since these directions are distinct, it follows that $l$ is a node of the curve $\Gamma$, which finishes the proof.
\end{proof}

In view of the existence of singularities of $\Gamma$ and their geometric significance, it seems a natural enumerative question to ask about the number of nodes of the curve $\Gamma$ for a general cubic threefold $V$.

Let us recall that for $V$ very general, the Fano surface $F(V)$ has Picard number $1$ (\cite[Section~1.3]{roulleau}).
The (numerical) Néron-Severi group $\NS(F(V))\cong\bZ$ is generated by the class $L$ of the incidence divisor
\[
C_s=\text{(closure of) }\{l\in F(V)\mid l\cap s\neq\emptyset,\;l\neq s\}\subset F(V)
\]
of lines intersecting a given line $s\in F(V)$ (note that $C_s\subset JV$ is the Abel-Prym curve for the Prym presentation of $JV$ using the conic bundle structure defined by $s$).

We have $L^2=5$, as this is the number of lines on a smooth cubic surface intersecting two given skew lines on it. 
Moreover, $K_{F(V)}=3L$ in $\NS(F(V))$ and $\Gamma'$ is linearly equivalent to $2K_{F(V)}$ (see \cite[Section~10]{cg}).

\begin{thm}\label{classgamma}
For every smooth cubic threefold $V$, the curve $\Gamma$ is numerically equivalent to $8K_{F(V)}$ in $F(V)$.
\end{thm}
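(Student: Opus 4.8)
My plan is to transfer the computation to the generic fiber of $\tcP_5$, where the Picard group and the canonical class are under control. Fix a general pair $(V,\delta)\in\cR\cC^+$, set $A=\kappa(V,\delta)\in\cA_4$, and identify the fiber $\tcP_5^{-1}(A)$ with the \'etale double cover $\varphi\colon\widetilde{F(V)}\to F(V)$ defined by $\delta$, with inclusion $\iota\colon\widetilde{F(V)}\hookrightarrow\ocR_5$ and deck involution $j$. By \autoref{involution}, $\varphi^{-1}(\Gamma)$ is the union of the two curves $\cT^o_5\cap\widetilde{F(V)}$ and $\Delta^{ram}_0\cap\widetilde{F(V)}$, interchanged by $j$. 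Since $\varphi$ is \'etale of degree $2$, both these curves are reduced, $\varphi$ restricts to a birational morphism $\cT^o_5\cap\widetilde{F(V)}\to\Gamma$ (a general point of $\Gamma$ has exactly one preimage in each component), and $\varphi^*[\Gamma]=\iota^*[\cT^o_5]+\iota^*\delta_0^{ram}$ in $\NS(\widetilde{F(V)})_\bQ$. Using the identities $\iota^*[\cT^o_5]=\iota^*\delta_0^{ram}=4\,\iota^*\lambda$ established in the proof of \autoref{To5domin}, this yields $\varphi^*[\Gamma]=8\,\iota^*\lambda$, so everything reduces to computing $\iota^*\lambda$.

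For this key step the idea is to exploit that $\widetilde{F(V)}$ is (for $(V,\delta)$ general) a smooth surface realised as a fiber of $\tcP_5$: granting that $\tcP_5$ is a submersion along $\widetilde{F(V)}$ off a codimension-$\geq 2$ locus and that $\ocR_5$ is $\bQ$-Gorenstein there, the relative dualizing sheaf gives $K_{\widetilde{F(V)}}=\iota^*K_{\ocR_5}$ in $\Pic(\widetilde{F(V)})_\bQ$. I then plug in the standard expression for the canonical class (see \cite{fa-lu})
\[
K_{\ocR_g}=13\lambda-2\delta_0'-2\delta_0''-3\delta_0^{ram}-2\sum_{i=1}^{\lfloor g/2\rfloor}\bigl(\delta_i+\delta_{g-i}+\delta_{i:g-i}\bigr).
\]
As recalled in the proof of \autoref{To5domin}, $\iota^*$ annihilates every boundary class except $\delta_0^{ram}$, for which $\iota^*\delta_0^{ram}=4\,\iota^*\lambda$; hence $\iota^*K_{\ocR_5}=13\,\iota^*\lambda-3\cdot 4\,\iota^*\lambda=\iota^*\lambda$. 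On the other hand $\varphi$ is \'etale, so $K_{\widetilde{F(V)}}=\varphi^*K_{F(V)}$. Combining the last three facts, $\iota^*\lambda=\varphi^*K_{F(V)}$, and therefore $\varphi^*[\Gamma]=8\,\varphi^*K_{F(V)}=\varphi^*(8K_{F(V)})$. Since $\varphi$ is a finite surjection of smooth projective surfaces, $\varphi_*\varphi^*=2\cdot\mathrm{id}$ shows that $\varphi^*$ is injective on numerical classes, whence $\Gamma\equiv 8K_{F(V)}$ for $V$ general.

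To deduce the statement for every smooth cubic threefold I would argue by deformation. Over the connected parameter space $\cU$ of smooth cubic threefolds, the curves $\Gamma'_V$ form a flat family (each $\Gamma'_V$ has pure dimension $1$ and is linearly equivalent to $2K_{F(V)}$, so its Hilbert polynomial is constant), and the assignment $r\mapsto l$ defines a morphism of this family to the universal Fano surface $\cF\to\cU$ whose image is $\Gamma$. Consequently the $1$-cycle class $[\Gamma_V]$ — which for $V$ general equals the push-forward of $[\Gamma'_V]$, since $\Gamma'\to\Gamma$ is birational by \autoref{gamma} — is a flat section of $R^2(\cF/\cU)_*\bZ$; since it agrees with the flat class $8K_{F(V)}$ at the general point, it agrees everywhere.

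The main obstacle I anticipate is the middle step, namely the justification of $K_{\widetilde{F(V)}}=\iota^*K_{\ocR_5}$: one must check that $\tcP_5$ is generically a submersion along the fiber (its codifferential at $(C,\eta)$ being the multiplication map $\operatorname{Sym}^2 H^0(C,\omega_C\otimes\eta)\to H^0(C,\omega_C^2)$, which is injective for $(C,\eta)$ general) and that $\widetilde{F(V)}$ meets the singular locus of $\ocR_5$ — which it does along $\Delta^{ram}_0$ — only in codimension $2$, so that the adjunction identity holds as an equality of $\bQ$-divisor classes on the smooth surface $\widetilde{F(V)}$. A secondary point requiring care is the multiplicity-one claim in $\varphi^*[\Gamma]=\iota^*[\cT^o_5]+\iota^*\delta_0^{ram}$: by the $j$-symmetry the two multiplicities coincide, and they must equal $1$ because $\varphi$ is \'etale, so this amounts to checking that the scheme-theoretic restrictions of $\cT^o_5$ and $\Delta^{ram}_0$ to $\widetilde{F(V)}$ are reduced.
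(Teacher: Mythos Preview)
Your argument is essentially the paper's own proof: you pull $\Gamma$ back along the double cover $\varphi$, identify $\varphi^*[\Gamma]$ with $\iota^*[\cT^o_5]+\iota^*\delta_0^{ram}=8\iota^*\lambda$, compute $K_{\widetilde{F(V)}}$ two ways (as $\iota^*K_{\ocR_5}=\iota^*\lambda$ via Farkas--Ludwig, and as $\varphi^*K_{F(V)}$ via Hurwitz), and conclude. The only cosmetic difference is that the paper observes $\NS(F(V))\cong\bZ\cdot L$ for $V$ very general, writes $[\Gamma]=mL$ and solves $m=24$, whereas you appeal to injectivity of $\varphi^*$ on numerical classes; and the paper dispatches the specialisation to arbitrary smooth $V$ in one sentence (``both $\Gamma$ and $K_{F(V)}$ are restrictions of divisors in the universal Fano variety'') rather than via a flatness argument on $\Gamma'$.

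Two small remarks. First, in your displayed formula for $K_{\ocR_g}$ the coefficients of $\delta_i,\delta_{g-i},\delta_{i:g-i}$ are not all $-2$ (they depend on $i$ in \cite{fa-lu}); this is harmless since $\iota^*$ annihilates them, but it is worth getting right. Second, your stated worry that ``$\widetilde{F(V)}$ meets the singular locus of $\ocR_5$ along $\Delta_0^{ram}$ only in codimension $2$'' is misphrased: the intersection with $\Delta_0^{ram}$ is a curve, hence codimension $1$ in $\widetilde{F(V)}$; what one actually needs is that the extension $\ocP_5\colon\ocR_5\dashrightarrow\ocA_4$ is defined and submersive outside a codimension $\geq 2$ locus of $\ocR_5$ (which the paper invokes), so that the adjunction $K_{\widetilde{F(V)}}=\iota^*K_{\ocR_5}$ holds on the level of $\bQ$-classes. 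With these cosmetic fixes your write-up matches the paper's proof.
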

\begin{proof}
Since both $\Gamma$ and $K_{F(V)}$ are the restriction to $F(V)$ of divisors in the universal Fano variety of lines in cubic threefolds, it is enough to prove the result when $V$ is very general.

Pick an even $2$-torsion point $\delta\in JV_2$, and consider the \'etale double cover $\varphi:\widetilde{F(V)}\to F(V)$ defined by $\delta$.
Recall from \autoref{involution} that $\widetilde{\Gamma}=\varphi^{-1}(\Gamma)$ has $\cT^o_5\cap\widetilde{F(V)}$ and $\Delta_0^{ram}\cap\widetilde{F(V)}$ as irreducible components (exchanged by the natural involution on $\widetilde{F(V)}$).

Let us write $mL$ for the class of $\Gamma$ in $\NS(F(V))$.
Recall from the proof of \autoref{To5domin} that, if $\iota^*:\Pic(\ocR_5)_\bQ\to\Pic(\widetilde{F(V)})_\bQ$ is the natural pullback map, then $\iota^*([\cT^o_5])=\iota^*(\delta^0_{ram})=4\iota^*\lambda$ and $\iota^*$ annihilates any other basic divisor class of $\Pic(\ocR_5)_\bQ$.
Therefore, we have an equality
\[
m\varphi^*L=\varphi^*\Gamma=8\iota^*\lambda
\]
in $\NS(\widetilde{F(V)})_\bQ$.
Now we will compare the classes $\varphi^*L$ and $\iota^*\lambda$ by means of the canonical divisor $K_{\widetilde{F(V)}}$ of $\widetilde{F(V)}$.

On the one hand, note that $\widetilde{F(V)}\subset\tcR_5$ is the general fiber of ${\ocP}_5:{\ocR}_5\dasharrow\ocA_4$, the rational map extending $\tcP_5$ outside a locus of codimension $\geq2$ in $\ocR_5$ (here $\ocA_4$ denotes any toroidal compactification of $\cA_4$, see for instance \cite{cmghl}). 

It follows that $K_{\widetilde{F(V)}}=\iota^*(K_{\ocR_5})$ in $\Pic(\widetilde{F(V)})_\bQ$.
Using the expression for the canonical class $K_{\ocR_5}$ given in \cite[Theorem~1.5]{fa-lu}, we obtain
\[
K_{\widetilde{F(V)}}=\iota^*\big(13\lambda-2(\delta_0'+\delta_0'')-3\delta_0^{ram}-\ldots\big)=13\iota^*\lambda-3\iota^*\delta_0^{ram}=\iota^*\lambda.
\]

On the other hand, applying the Hurwitz formula to the \'etale double cover $\varphi$ yields an equality
\[
K_{\widetilde{F(V)}}=\varphi^*K_{F(V)}=3\varphi^*L
\]
in $\NS(\widetilde{F(V)})$.
Comparing the two expressions for $K_{\widetilde{F(V)}}$, we find the equality $\iota^*\lambda=3\varphi^*L$ in $\NS(\widetilde{F(V)})_\bQ$, and hence
\[
m\varphi^*L=\varphi^*\Gamma=8\iota^*\lambda=24\varphi^*L
\]
from which we deduce that $m=24$.
\end{proof}

Now we can answer the question above, namely, count the number of nodes in the curve $\Gamma$ for a general cubic threefold $V$.
Indeed, this number arises as the difference between the arithmetic genus and the geometric genus of $\Gamma$.

The geometric genus $g(\Gamma)$ of $\Gamma$ is that of its normalization $\Gamma'$. Since $\omega_{\Gamma'}=\cO_{\Gamma'}(\Gamma'+K_{F(V)})$ by adjunction, we have
\[
2g(\Gamma')-2=6L\cdot(6L+3L)=54L^2=270
\]
from which the equality $g(\Gamma')=136$ follows. 

On the other hand, since $\Gamma=24L$ in $\NS(F(V))$ by \autoref{classgamma}, again by adjunction the arithmetic genus of $\Gamma$ satisfies
\[
2p_a(\Gamma)-2=24L\cdot(24L+3L)=648L^2=3240
\]
and thus $p_a(\Gamma)=1621$.
Therefore, $\Gamma$ has exactly $1485$ nodes (which together with \autoref{classgamma} concludes the proof of \autoref{mainC}).

Geometrically these nodes translate as in \autoref{mainD}.\eqref{mainD:nodes}, whereas the good control of the intersection of $\Gamma$ and $\Gamma'$ reads as \autoref{mainD}.\eqref{mainD:inters}. In other words, if $V\subset\bP^4$ is a general smooth cubic threefold:
\begin{itemize}[\textbullet]
\item $V$ contains exactly $1485$ lines $l$ for which there exist $2$-planes $\pi_1,\pi_2\subset\bP^4$ and lines $r_1,r_2\subset V$ satisfying $V\cdot\pi_i=l+2r_i$.
\item $V$ contains exactly $720$ lines $l$ with the following property: there exist $2$-planes $\pi_1,\pi_2\subset\bP^4$ and lines $r_1,r_2\subset V$ such that $V\cdot\pi_1=l+2r_1$ and $V\cdot\pi_2=2l+r_2$.
\end{itemize}




Coming back to our description of the general fiber of the restricted Prym map $\tcP_5|_{\cT^o_5}$, we find:

\begin{cor}\label{DesingPrecisa}
For a general $(V,\delta)\in\cR\cC^+$, the following hold:
\begin{enumerate}[\rm (1)]
 \item $\cT^o_5\cap\widetilde{F(V)}$ is a partial desingularization of $\Gamma$, with exactly $765$ nodes.
 
 \item The intersection of $\cT^o_5\cap\widetilde{F(V)}$ and $\Delta_0^{ram}\cap\widetilde{F(V)}$ is transverse, and consists of $1440$ points.
\end{enumerate}
\begin{proof}
As usual, let us denote by $\varphi:\widetilde{F(V)}\to F(V)$ the double \'etale cover induced by $\delta$, whose associated involution exchanges the two components $\cT^o_5\cap\widetilde{F(V)}$ and $\Delta_0^{ram}\cap\widetilde{F(V)}$ of $\widetilde{\Gamma}=\varphi^{-1}(\Gamma)$.

Since the morphism $\varphi:\widetilde{\Gamma}\to\Gamma$ is \'etale and $\Gamma$ has only nodes as singularities, it follows that $\cT^o_5\cap\widetilde{F(V)}$ and $\Delta_0^{ram}\cap\widetilde{F(V)}$ have only nodes as singularities, and intersect transversely.

Conversely, the preimage of a node of $\Gamma$ must consist of:
\begin{itemize}[\textbullet]
 \item Either a node of $\cT^o_5\cap\widetilde{F(V)}$ and a node of $\Delta_0^{ram}\cap\widetilde{F(V)}$.
 \item Or two intersection points of $\cT^o_5\cap\widetilde{F(V)}$ and $\Delta_0^{ram}\cap\widetilde{F(V)}$ (where the intersection is transverse).
\end{itemize}

Therefore the proof is reduced to computing the intersection number $[\cT^o_5\cdot\Delta^0_{ram}]_{\widetilde{F(V)}}$.
Using the projection formula and \autoref{classgamma}, we deduce that
\begin{align*}
 2[\cT^o_5\cdot\Delta^0_{ram}]_{\widetilde{F(V)}}=[\cT^o_5\cdot\Delta^0_{ram}]_{\widetilde{F(V)}}+[\cT^o_5\cdot\cT^o_5]_{\widetilde{F(V)}}=[\cT^o_5\cdot\varphi^*\Gamma]_{\widetilde{F(V)}}=\\
 =[\varphi_*(\cT^o_5\cdot\varphi^*\Gamma)]_{F(V)}=[\Gamma\cdot\Gamma]_{F(V)}=2880
\end{align*}
(the first equality follows from the fact that $\cT^o_5$ and $\Delta^0_{ram}$ have the same class in $\widetilde{F(V)}$).

It turns out that $[\cT^o_5\cdot\Delta^0_{ram}]_{\widetilde{F(V)}}=1440$.
According to the previous description, these $1440$ intersection points form the preimage by $\varphi$ of $720$ nodes of $\Gamma$.
The remaining $765$ nodes of $\Gamma$ lift to nodes of $\cT^o_5\cap\widetilde{F(V)}$ and $\Delta_0^{ram}\cap\widetilde{F(V)}$.
\end{proof}
\end{cor}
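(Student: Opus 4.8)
The plan is to bootstrap everything from the nodal description of $\Gamma$ obtained in \autoref{gamma} (together with the count of $1485$ nodes coming from \autoref{classgamma} and the adjunction computations of $p_a(\Gamma)$ and $g(\Gamma')$) and from the structure of the étale double cover $\varphi\colon\widetilde{F(V)}\to F(V)$: its deck involution $j$ is fixed-point-free and, by \autoref{involution}, permutes in a single transposition the two irreducible components $\cT^o_5\cap\widetilde{F(V)}$ and $\Delta_0^{ram}\cap\widetilde{F(V)}$ of $\widetilde{\Gamma}:=\varphi^{-1}(\Gamma)$.

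First I would study the preimage of one node $p\in\Gamma$. Because $\varphi$ is étale, each of the two points of $\varphi^{-1}(p)$ has a neighbourhood in $\widetilde{\Gamma}$ analytically isomorphic to a neighbourhood of $p$ in $\Gamma$, hence is again an ordinary node: two smooth branches crossing transversally. Each branch lies on exactly one of the two components, and since $j$ is free and swaps the components, the preimage of $p$ conforms to exactly one of two mutually exclusive patterns: either the two branches at each of the two preimage points lie on the same component, so that $p$ lifts to one node of $\cT^o_5\cap\widetilde{F(V)}$ and one node of $\Delta_0^{ram}\cap\widetilde{F(V)}$; or the two branches at each preimage point lie on different components, so that $p$ lifts to two transverse points of $\cT^o_5\cap\Delta_0^{ram}$ inside $\widetilde{F(V)}$. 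Conversely, every node of $\cT^o_5\cap\widetilde{F(V)}$, and every point of $\cT^o_5\cap\Delta_0^{ram}$ on $\widetilde{F(V)}$, occurs in this way. Writing $n_1$ (resp.\ $n_2$) for the number of nodes of $\Gamma$ of the first (resp.\ second) type, we thus obtain $n_1+n_2=1485$, the number of nodes of $\cT^o_5\cap\widetilde{F(V)}$ equals $n_1$, and the intersection $\cT^o_5\cap\Delta_0^{ram}$ in $\widetilde{F(V)}$ is transverse of length $2n_2$; moreover $\cT^o_5\cap\widetilde{F(V)}\to\Gamma$ is then a partial normalization, resolving precisely the $n_2$ nodes of the second type.

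It remains to compute the single number $[\cT^o_5\cdot\Delta_0^{ram}]_{\widetilde{F(V)}}=2n_2$. Since $\varphi$ is étale, $\varphi^*\Gamma=(\cT^o_5\cap\widetilde{F(V)})+(\Delta_0^{ram}\cap\widetilde{F(V)})$ as divisors; by the class computation in the proof of \autoref{To5domin} the two summands are numerically equivalent in $\widetilde{F(V)}$, so $[\cT^o_5\cdot\cT^o_5]_{\widetilde{F(V)}}=[\cT^o_5\cdot\Delta_0^{ram}]_{\widetilde{F(V)}}$; and $\varphi$ restricts to a degree-one (hence birational) morphism $\cT^o_5\cap\widetilde{F(V)}\to\Gamma$ — a general point of $\Gamma$ has one preimage on each component by \autoref{involution} — so $\varphi_*(\cT^o_5\cap\widetilde{F(V)})=\Gamma$. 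The projection formula then gives
\[
2[\cT^o_5\cdot\Delta_0^{ram}]_{\widetilde{F(V)}}=[\cT^o_5\cdot\varphi^*\Gamma]_{\widetilde{F(V)}}=[\Gamma\cdot\Gamma]_{F(V)}=(24L)^2=2880,
\]
using \autoref{classgamma} (so $\Gamma\equiv24L$) and $L^2=5$. Hence $[\cT^o_5\cdot\Delta_0^{ram}]_{\widetilde{F(V)}}=1440$, whence $n_2=720$ and $n_1=1485-720=765$, which yields both assertions.

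Once \autoref{gamma} and \autoref{classgamma} are available the numerics are routine, so the one point that needs genuine care is the local dichotomy of the second paragraph: one must verify that étale-ness really prevents the preimage of an ordinary node from degenerating (to a cusp, a tacnode, or a singular point lying on a single component with higher multiplicity), and that the distribution of branches among the two components is governed solely by the free involution $j$. I would make this precise by passing to the analytic (or henselian) local ring at each preimage point, where $\varphi$ is an isomorphism onto a neighbourhood of $p$, and by using that $j$ acts on the set of irreducible components of $\widetilde{\Gamma}$ as a single transposition.
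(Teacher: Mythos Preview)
Your proposal is correct and follows essentially the same approach as the paper's proof: the local dichotomy for preimages of nodes under the \'etale cover, the reduction to the single intersection number $[\cT^o_5\cdot\Delta_0^{ram}]_{\widetilde{F(V)}}$, and its computation via the equality of classes and the projection formula (yielding $\tfrac12\Gamma^2=1440$, hence $n_2=720$ and $n_1=765$) are exactly the paper's argument. Your added remarks---making explicit that $\varphi_*(\cT^o_5\cap\widetilde{F(V)})=\Gamma$ because the component maps birationally onto $\Gamma$, and the final caution about the analytic-local verification---only spell out points the paper leaves implicit.
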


\section{Genus at least 6}\label{sec:genus6imes}

\subsection{Generic finiteness on $\cT^e_g$ and $\cT^o_g$}\label{subsec:genfin}
Our first purpose in this section is to prove that, for $g\geq6$, the restrictions of the Prym map $\cP_g:\cR_g\to\cA_{g-1}$ to $\cT^e_g$ and $\cT^o_g$ are generically finite onto their image (which in particular implies \autoref{mainB}.\eqref{mainB:gg6}).
The result is actually valid for restrictions to arbitrary divisors when $g\geq8$, whereas in the cases $g=6,7$ the use of specific cohomology classes is required in our approach:

\begin{thm}\label{genfinite}
For every $g\geq6$ the restricted Prym maps $\cP_g\,|_{\cT_g^e}$ and $\cP_g\,|_{\cT_g^o}$ are generically finite onto their image.
\end{thm}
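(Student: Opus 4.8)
The plan is to separate the argument into the uniform high-genus range $g\geq 8$ and the two exceptional cases $g=6,7$, exactly as the statement suggests. For $g\geq 8$ I would not use anything specific about the divisors $\cT^e_g$ and $\cT^o_g$: the point is that the generic Prym variety has very few singular points on its theta divisor, so the Prym map cannot contract a divisor. Concretely, one knows that for $g\geq 7$ the Prym map $\cP_g$ is generically injective, and more precisely that the codimension of the ramification (and of the locus where fibers are positive-dimensional) is large. The cleanest route is: if $\cP_g|_D$ failed to be generically finite onto its image for some divisor $D\subset\cR_g$, then a general point $A=\cP_g(C,\eta)$ of $\cP_g(D)$ would have a positive-dimensional fiber meeting $D$; since $\dim\cR_g=\dim\cA_{g-1}+g-3$ for the relevant range, the locus in $\cA_{g-1}$ over which $\cP_g$ has positive-dimensional fibers must itself have codimension $\leq 1$ inside $\cP_g(D)$, which for $g\geq 8$ contradicts the known estimates on the locus of abelian varieties whose Prym-theta divisor is "too singular" (via the Andreotti–Mayer type bounds, or via the infinitesimal Torelli/codifferential computation for the Prym map that shows its fibers are finite off a locus of high codimension). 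So the first step is to quote/assemble these codimension estimates and conclude the $g\geq 8$ case in one stroke.

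For $g=6$ and $g=7$ this dimension count is too weak, so I would use the cohomology classes of $\cT^e_g$ and $\cT^o_g$ from \autoref{class} together with a fiber-restriction argument in the spirit of the proof of \autoref{To5domin}. The idea: take a general $A\in\cP_g(\cT_g^{\bullet})$ (for $\bullet\in\{e,o\}$), let $F$ be the normalization of an irreducible component of $\cP_g^{-1}(A)$ through a general point of $\cT_g^{\bullet}$, and let $\iota:F\hookrightarrow\ocR_g$ be the induced map. If $\cP_g|_{\cT_g^{\bullet}}$ were not generically finite, $F$ would be positive-dimensional and $\cT_g^{\bullet}$ would restrict to a \emph{nonzero effective} divisor class $\iota^*[\cT_g^{\bullet}]$ on $F$ — yet I want to exhibit a contradiction by showing this restricted class must vanish, or must be numerically incompatible with the canonical class of $F$. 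As in \autoref{To5domin}, $\iota^*$ kills $\lambda$ and all the boundary classes except possibly $\delta_0',\delta_0'',\delta_0^{ram}$ once one knows the generic Prym fiber avoids the relevant boundary divisors; the surviving relations among $\iota^*\lambda$, $\iota^*\delta_0'$, $\iota^*\delta_0''$, $\iota^*\delta_0^{ram}$ then force $\iota^*[\cT_g^{\bullet}]$ into a one-dimensional space, and comparing with $\iota^*[\cT_g^{e'}]$ for the \emph{other} parity (whose restriction we can control, e.g. via $\cP_g(\cT_g^e)\subset\theta_{null}$ when $\bullet=o$, or via a second independent relation) pins it down and produces the contradiction.

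The main obstacle is precisely the $g=6,7$ step: unlike in genus $5$, the generic fiber of $\tcP_g$ is a point, so there is no canonical "ambient" positive-dimensional fiber $\widetilde{F(V)}$ on which to compute, and I must instead argue abstractly about an \emph{arbitrary} hypothetical component $F$ of a fiber — which means I need to (i) verify that the relevant boundary divisors ($\Delta_0',\Delta_0'',\Delta_i,\ldots$) genuinely do not contain $F$, using that their Prym images lie in proper (decomposable, or Jacobian, or non-admissible) subloci of $\cA_{g-1}$, and (ii) extract enough numerical relations on $\NS(F)_\bQ$ to conclude. Establishing (i) requires knowing that $\cP_g(\cT_g^{\bullet})$ is not contained in $\cP_g(\Delta_{\text{bad}})$, i.e. the image divisor meets the locus of indecomposable, non-Jacobian, admissible-cover Pryms; this should follow from the explicit form of $[\cT_g^{\bullet}]$ (its $\lambda$-coefficient being positive forces $\cT_g^{\bullet}$ not to be a boundary-supported divisor) together with irreducibility of $\cT_g^{\bullet}$ for $g\neq 4$ from \autoref{class}. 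I expect (ii) to go through because the computed classes have large, "generic"-looking $\lambda$-coefficients, so after restriction they cannot be proportional to a canonical-type class that would be forced to vanish on a fiber — but making this last incompatibility precise (possibly by intersecting with a suitable test curve, or by invoking that the Hodge structure varies along the smooth locus of any positive-dimensional subvariety of $\cR_g$, as in the final line of the proof of \autoref{To5domin}) is the delicate point.
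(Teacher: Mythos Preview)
Your $g\geq 8$ sketch is roughly on the right track when you mention the ``infinitesimal Torelli/codifferential computation'', but the dimension formula $\dim\cR_g=\dim\cA_{g-1}+g-3$ is wrong (for $g\geq 6$ one has $\dim\cA_{g-1}\geq\dim\cR_g$), and the Andreotti--Mayer style argument you outline is both vaguer and weaker than what is actually needed. The paper's route here is direct: by Naranjo's criterion, if $d\cP_g$ is not injective at $(C,\eta)$ then $\Cliff(C)\leq 2$, i.e.\ $C$ is tetragonal or a plane sextic; for $g\geq 8$ these loci have codimension $\geq 2$ in $\cM_g$, so $d\cP_g$ is injective at the general point of \emph{any} divisor in $\cR_g$.

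For $g=6,7$ there is a genuine gap. Your fiber-restriction plan runs into an immediate problem: if $\cP_g|_{\cT_g^{\bullet}}$ were not generically finite, the fiber component $F$ through a general point of $\cT_g^{\bullet}$ may well be \emph{contained} in $\cT_g^{\bullet}$, so $\iota^*[\cT_g^{\bullet}]$ is a normal-bundle class, not an ``effective divisor on $F$'', and your contradiction mechanism dissolves. Moreover, unlike in genus $5$ you have no geometric description of $F$, so the vanishing of $\iota^*\delta_0',\iota^*\delta_0'',\ldots$ cannot be read off from the structure of the fiber. You yourself flag step (ii) as ``the delicate point'' without resolving it.

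The idea you are missing is that one never needs to look at a hypothetical fiber. The same Clifford-index criterion shows that the locus where $d\cP_g$ fails to be injective is, for $g=7$, contained in the preimage of the irreducible Brill--Noether divisor $\Mtetra\subset\cM_7$, and for $g=6$ is exactly the irreducible ramification divisor $\cU_{6,0}\subset\cR_6$. Both have known classes in $\Pic_\bQ$ (from Eisenbud--Harris and Farkas--Ludwig respectively). Since $\cT_g^e$ and $\cT_g^o$ are irreducible divisors whose classes, given in \autoref{class}, are \emph{not proportional} to these, they cannot coincide with them; hence $d\cP_g$ is injective at a general point of $\cT_g^{\bullet}$ and the restricted map is generically finite. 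This replaces your whole fiber argument with a two-line class comparison.
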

\begin{proof}
It is well known (see the proof of the main theorem and the remark in \cite{n}) that, if $(C,\eta)\in\cR_g$ is a point where the differential $d\cP_g$ fails to be injective, then $\Cliff(C)\leq2$.
According to classical results of Martens (\cite[Beispiel 7 and 8]{martens}), this means that either $C$ has a $g^1_4$ or $C$ is a plane sextic;
of course the latter may only happen if $g=10$.

If $g\geq8$, the locus in $\cM_g$ of tetragonal curves has codimension at least $2$, whereas the locus of plane sextics in $\cM_{10}$ has codimension 8. Hence, for $g\geq8$, $d\cP_g$ is injective at the general point of any divisor $D\subset\cR_g$ and $\cP_g\;|_D$ is generically finite onto its image.

If $g=7$, the Brill-Noether number $\rho(7,1,4)$ equals $-1$.
This implies (see \cite{eh}) that the Brill-Noether locus $\Mtetra$ of tetragonal curves in $\cM_7$ is an irreducible divisor, whose cohomology class in $\Pic(\ocM_7)_\bQ$ is known up to a scalar:
\[
[\Mtetra]=c\big(10\lambda -\frac 43 \delta_0
-6\delta_1-10\delta _2-12 \delta_3\big),
\]
for some $c\in \mathbb Q$.
On the other hand, the class of $\cT_7\subset \ocM_7$ is
\[
[\cT_7]= 16(129 \lambda -16 \delta_0 - 63 \delta_1-93 \delta_2 -105 \delta_3) 
\]
(see \cite{te}).
Comparing the two expressions, we conclude that the general elements of $\cT^e_7$ and $\cT^o_7$ are not covers of tetragonal curves.

For the case $g=6$, note that the locus in $\cR_6$ where the infinitesimal Prym-Torelli theorem fails is, by definition, the ramification divisor\footnote{The divisor $\cU_{6,0}$ is a particular case of the loci $\cU_{2i+6,i}\subset\cR_{2i+6}$ of Prym curves whose Prym-canonical model has a nonlinear $i$-th syzygy.
According to the \textit{Prym-Green conjecture}, every $\cU_{2i+6,i}$ is expected to be a divisor in $\cR_{2i+6}$;
see \cite[Section~8]{fa} for more details.} $\cU_{6,0}$ of the generically finite map $\cP_6$. In \cite[Theorem~0.6]{fa-lu} (see also \cite[Theorem~8.6]{fa}), using an analysis of the syzygies of Prym-canonical curves, the class of this locus is computed to be
\[
[\mathcal U_{6,0}]=7\lambda -\frac 32 \delta ^{ram}-\delta_0'-\ldots 
\]
Furthermore, the divisor $\cU_{6,0}$ is irreducible (see \cite[Theorem~0.4]{fgsmv}, where this ramification divisor is denoted by $\cQ$, and references therein).
By comparison with the cohomology classes of \autoref{class}, it follows that the supports of $\cT^e_6$ and $\cT^o_6$ are different from that of $\cU_{6,0}$, which finishes the proof.
\end{proof}

In the rest of the paper, we will focus on the case $g=6$.
Recall that the Prym map $\cP_6:\cR_6\to\cA_5$ is dominant and generically finite of degree $27$ (\cite{donsmith}).
Moreover, the correspondence induced on a general fiber by the tetragonal construction (cf. \cite{do_tetr}) is isomorphic to the incidence correspondence on the $27$ lines of a smooth cubic surface (\cite[Section~4]{do_fibres}).

Therefore, from \autoref{thetanull2} and \autoref{genfinite} one immediately deduces that $\cP_6\,|_{\cT_6^e}$ is generically finite of degree $27$, which concludes the proof of \autoref{mainA}.\eqref{mainA:gg6} and indicates that $\cT^e_6$ remains invariant under the tetragonal construction.
In fact:

\begin{prop}
Let $(C_i,\eta_i,M_i)$ ($i=1,2,3$) be a tetragonally related triple of smooth Prym curves $(C_i,\eta_i)\in\cR_6$ with a $g^1_4$ $M_i$ on $C_i$.
\begin{enumerate}[{\rm (1)}]
\item\label{item:preserveTe} If $(C_1,\eta_1)\in\cT^e_6$ is general, then $(C_2,\eta_2),(C_3,\eta_3)\in\cT^e_6$ as well.
\item\label{item:To6ToImTo7} If $(C_1,\eta_1)\in\cT^o_6$ is general, then $JC_2,JC_3\in\cP_7(\cT^o_7)$.
\end{enumerate}
\end{prop}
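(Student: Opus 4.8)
The plan for \eqref{item:preserveTe} is immediate from what is already available. By Donagi's theorem the three members of a tetragonally related triple of smooth Prym curves have isomorphic Prym varieties --- this is exactly what makes the induced correspondence on a general fibre of $\cP_6$ the incidence correspondence of the $27$ lines on a cubic surface (cf.~\cite{do_tetr}) --- so $\cP_6(C_1,\eta_1)=\cP_6(C_2,\eta_2)=\cP_6(C_3,\eta_3)$. Since $(C_1,\eta_1)\in\cT^e_6=\cP_6^{-1}(\theta_{null})$ by \autoref{thetanull2}.\eqref{thetanull2:from3}, this common ppav lies in $\theta_{null}$, and therefore $(C_2,\eta_2),(C_3,\eta_3)\in\cP_6^{-1}(\theta_{null})=\cT^e_6$; the genericity of $(C_1,\eta_1)$ only serves to guarantee that the tetragonal construction is non-degenerate.

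For \eqref{item:To6ToImTo7} I would work with $C_2$, the case of $C_3$ being identical since $(C_2,\eta_2,M_2)$ and $(C_3,\eta_3,M_3)$ play symmetric roles. The starting point is the reformulation behind the whole $g\ge6$ discussion: since $(C_1,\eta_1)$ is general, its odd Prym semicanonical pencil $L_1$ (an \emph{even} theta-characteristic with $h^0(C_1,L_1)=2$) has $h^0(C_1,L_1\otimes\eta_1)=1$, so $f_1^*L_1\in V^2(C_1,\eta_1)\setminus V^4(C_1,\eta_1)$ is, by \autoref{V2}, a singular point of the Brill-Noether-Prym locus $V^2(C_1,\eta_1)=T(\widetilde{C_1})$; conversely, for a non-hyperelliptic genus $7$ Prym curve $(D,\zeta)$, an even theta-characteristic $\theta$ on $D$ with $h^0(D,\theta)=2$ and $h^0(D,\theta\otimes\zeta)$ odd is by definition an odd Prym semicanonical pencil --- hence $(D,\zeta)\in\cT^o_7$ --- and by \autoref{V2} such a $\theta$ is detected by a singular point $f_D^*\theta$ of $V^2(D,\zeta)$. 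So the goal becomes: exhibit a genus $7$ Prym curve $(D,\zeta)$ with $\cP_7(D,\zeta)=JC_2$ whose locus $V^2(D,\zeta)$ acquires a singularity of this shape.

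To build such a $(D,\zeta)$ I would use Recillas' trigonal construction in the guise $R:\cR\cG^1_{7,3}\to\cG^1_{6,4}$ --- the exact analogue, with $C_2$ in place of the genus $3$ curve, of the isomorphism $R:\RGtri\to\Gtetra$ used in Section~\ref{sec:genus4} --- applied to the pair $(C_2,M_2)$, where $M_2$ is the base-point-free $g^1_4$ that $C_2$ inherits from the tetragonal construction. For $(C_1,\eta_1)$ general the series $M_2$ is non-special (it contains no divisor $2p+2q$ or $4p$), so $(C_2,M_2)$ lies in the image of $R$ and there is a unique genus $7$ trigonal Prym curve $(D,\zeta,N)$ with $R(D,\zeta,N)=(C_2,M_2)$; the commutative Recillas diagram then yields $\cP_7(D,\zeta)=JC_2$, proving in particular $JC_2\in\cP_7(\cR_7)$. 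The whole content of the statement is the refinement $(D,\zeta)\in\cT^o_7$, and this is the main obstacle. My plan for it is to transport the singularity of $V^2$ along the two constructions: the Abel-Prym curve $\widetilde D\subset JC_2$ produced by Recillas' construction is (the normalisation of) a curve built explicitly from $C_2$ and $M_2$, so $(D,\zeta)\in\cT^o_7$ is equivalent to the theta-dual $T(\widetilde D)=V^2(D,\zeta)$ having a singular point $f_D^*\theta$ of the shape above; and I would show that the singular point $f_1^*L_1$ of $V^2(C_1,\eta_1)$ propagates, through the degree-$8$ cover governing the tetragonal construction $(C_1,\eta_1,M_1)\rightsquigarrow(C_2,\eta_2,M_2)$ and then through the degree-$6$ cover underlying Recillas' construction, into a singular point of $V^2(D,\zeta)$ of exactly that form --- equivalently, that $L_1$ induces on $D$ an even theta-characteristic $\theta$ with $h^0(D,\theta)=2$ and $h^0(D,\theta\otimes\zeta)$ odd. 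The genuinely delicate step is the bookkeeping of the parities of these theta-characteristics along the covers; this is where I expect the real work to lie, and where genericity is again used --- to ensure smoothness of all the curves, irreducibility of the covers, and non-specialness of the linear series, so that both the tetragonal and the Recillas constructions are valid and invertible.
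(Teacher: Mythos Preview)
Your argument for \eqref{item:preserveTe} is correct and in fact cleaner than the paper's: since the tetragonal construction preserves Prym varieties and $\cT^e_6=\cP_6^{-1}(\theta_{null})$, the conclusion is immediate. The paper instead sets up the full Donagi machinery (Lemma~5.5 of \cite{do_fibres}) already here, because that machinery is what actually drives \eqref{item:To6ToImTo7}.

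For \eqref{item:To6ToImTo7} your proposal has a genuine gap. Your candidate $(D,\zeta)$ obtained from $(C_2,M_2)$ by Recillas is the right object, but your plan to ``propagate the singularity of $V^2$'' through the chain $C_1\rightsquigarrow C_2\rightsquigarrow D$ is not a proof: the singular point $f_1^*L_1$ lives in $P^-(C_1,\eta_1)$, whereas the singularity you need lives in $P^-(D,\zeta)\subset JC_2$, and there is no evident map carrying one to the other. Saying that $L_1$ ``induces'' a theta-characteristic $\theta$ on $D$ does not explain how, and the bookkeeping of parities you flag as ``the real work'' is in fact the entire content.

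What you are missing is the structural input from \cite[Lemma~5.5]{do_fibres}: the tetragonally related triple is encoded by a \emph{single} trigonal curve $R$ of genus $7$ together with a totally isotropic subgroup $\{0,\mu_1,\mu_2,\mu_3\}\subset JR_2$, so that your $D$ equals $R$ and $(R,\mu_i)$ is the Recillas partner of $(C_i,M_i)$ for \emph{every} $i$. In particular $R$ connects directly to $C_1$: since $C_1\in\cT_6$ one has $JC_1=\cP_7(R,\mu_1)\in\theta_{null}$, so by \autoref{thetanull2} the curve $R$ carries a semicanonical pencil $L_R$ with $h^0(R,L_R\otimes\mu_1)$ even, and $L_1$ corresponds to $f_1^*L_R$ under $P^+(R,\mu_1)\cong\Pic^5(C_1)$. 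The parity of $L_R\otimes\mu_2$, $L_R\otimes\mu_3$ is then forced by two ingredients you do not invoke: the Riemann--Mumford relation on the isotropic subgroup, and Donagi's compatibility \cite[Theorem~1.5]{do_fibres} between the quadratic forms $q_{JR}$ and $q_{JC_1}$, which identifies $h^0(R,L_R\otimes\mu_i)\bmod 2$ with $h^0(C_1,L_1\otimes\eta_1)\bmod 2$. This is how one gets $(R,\mu_2),(R,\mu_3)\in\cT^o_7$, hence $JC_2,JC_3\in\cP_7(\cT^o_7)$.
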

\begin{proof}
According to \cite[Lemma~5.5]{do_fibres}, giving the tetragonally related triple $(C_i,\eta_i,M_i)$ is equivalent to giving a trigonal curve $R\in\cM_7$ and a subgroup $W=\{0,\mu_1,\mu_2,\mu_3\}\subset JR_2$ (totally isotropic with respect to the Weil pairing), in such a way that:
\begin{itemize}[\textbullet]
 \item $(R,\mu_i)$ corresponds to $(C_i,M_i)$ under Recillas' trigonal construction.
 \item The $2$-torsion point $\eta_i\in (JC_i)_2$ is the projection of $\mu_j\in\langle\mu_i\rangle^\perp$ ($j\neq i$).
\end{itemize}

Fix a general $(C_1,\eta_1)\in \cT^e_6$ (resp., a general $(C_1,\eta_1)\in \cT^o_6$), and consider any $g^1_4$ $M_1$ on $C_1$.
By generality, $C_1$ admits a unique theta-characteristic $L_1$ with $h^0(C_1,L_1)\geq2$, which is a semicanonical pencil with $h^0(C_1,L_1)=2$ and $L_1\otimes\eta_1$ even (resp. odd).

Let $R\in\cM_7$ be the trigonal curve and let $W\subset JR_2$ be the totally isotropic subgroup defining the tetragonally related triple.
Since $\cP_7(R,\mu_1)=JC_1\in\theta_{null}\subset\cA_6$, it follows from \autoref{thetanull2}.\eqref{thetanull2:from3} that $(R,\mu_1)\in\cT^e_7$, so $R$ has a semicanonical pencil $L_R$ such that $h^0(R,L_R\otimes\mu_1)$ is even.
Moreover, if $f_1:R_1\to R$ is the cover determined by $(R,\mu_1)\in\cR_7$, then the 2-torsion singular point $L_1$ of the canonical theta divisor $\Theta_{C_1}\subset\Pic^5(C_1)$ corresponds to $f_1^*L_R$ under the identification of $\left(P^+(R,\mu_1),\Xi^+(R,\mu_1)\right)$ with $\left(\Pic^5(C_1),\Theta_{C_1}\right)$.

Now we want to determine the parity of the theta-characteristics $L_R\otimes\mu_2$ and $L_R\otimes\mu_3$.
Observe that the two parities are equal, since the Riemann-Mumford relation (see \cite{mu3} or \cite[Theorem~1.13]{harris}) gives
\[
h^0(R,L_R)+h^0(R,L_R\otimes\mu_1)+h^0(R,L_R\otimes\mu_2)+h^0(R,L_R\otimes\mu_3)\equiv\langle\mu_1,\mu_2\rangle\equiv 0\pmod{2},
\]
where we use that $\mu_3=\mu_1\otimes\mu_2$. 

Following \cite[Theorem~1.5]{do_fibres} and the notations therein, this means that for $i\in\{2,3\}$ we have $L_R\otimes\mu_i\in(\mu_1)^{\perp'}$, and hence:
\begin{align*}
h^0(R,L_R\otimes\mu_i)&\equiv
q_{JR}(L_R\otimes\mu_i)\equiv
q_{P(R,\mu_1)}(f_1^*(L_R\otimes\mu_i))\\
&\equiv
q_{JC_1}(L_1\otimes\eta_1)\equiv h^0(C_1,L_1\otimes\eta_1)\pmod{2}.
\end{align*}

If $(C_1,\eta_1)\in\cT^o_6$, then $L_1\otimes\eta_1$ is an odd theta-characteristic on $C_1$, and hence we obtain $(R,\mu_i)\in\cT^o_7$ for $i\in\{2,3\}$.
Therefore $JC_i=\cP_7(R,\mu_i)\in\cP_7(\cT^o_7)$, which proves \eqref{item:To6ToImTo7}.

If $(C_1,\eta_1)\in\cT^e_6$, then $L_1\otimes\eta_1$ is an even theta-characteristic on $C_1$ and thus $(R,\mu_2),(R,\mu_3)\in\cT^e_7$.
For $i\in\{2,3\}$ this gives $JC_i=\cP_7(R,\mu_i)\in\theta_{null}\subset\cA_6$, namely $C_i\in\cT_6$ admits a (unique, by generality) semicanonical pencil $L_i$.

Therefore, to finish the proof of \eqref{item:preserveTe} we only have to check that $(C_i,\eta_i)\in\cT^e_6$, namely that $L_i\otimes\eta_i$ is even.
This is again a consequence of \cite[Theorem~1.5]{do_fibres}:
\[
h^0(R,L_R\otimes\mu_1)\equiv q_{JR}(L_R\otimes\mu_1)\equiv q_{JC_i}(L_i\otimes\eta_i)\equiv h^0(C_i,L_i\otimes\eta_i)\pmod{2}.\qedhere
\]
\end{proof}

At present, we lack an interpretation for the Jacobian of a curve $C\in\cM_6$ being the Prym variety of a trigonal cover in $\cT^o_7$.
This prevents us from completely understanding the tetragonal construction applied to elements of $\cT^o_6$, and hence describing the (divisorial) components of $\cP_6^{-1}(\cP_6(\cT^o_6))$. 
Another natural question would be to find the degree of the map $\cP_6\,|_{\cT_6^o}$.

In this direction, partial information is obtained from cohomology classes. Once again, this reveals differences between the odd and the even cases:

\begin{prop}
$\cP_6^{-1}(\cP_6(\cT^o_6))$ contains other divisorial components apart from $\cT^o_6$.
In particular, the degree of the generically finite map $\cP_6\,|_{\cT_6^o}$ is strictly smaller than $27$.
\end{prop}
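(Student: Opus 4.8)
The plan is to prove $\deg(\cP_6|_{\cT^o_6})<27$; since $\cP_6^{-1}(\bar D)$ has pure codimension $1$ in the smooth variety $\cR_6$ (it is the preimage of the divisor $\bar D:=\overline{\cP_6(\cT^o_6)}\subset\cA_5$, which is a genuine divisor by \autoref{genfinite}), contains $\cT^o_6$, and since any further component of $\cP_6^{-1}(\bar D)$ must dominate $\bar D$ and hence meet a general fibre, the strict inequality is equivalent to the existence of such a component. So I would argue by contradiction, assuming $\deg(\cP_6|_{\cT^o_6})=27$; because $\cP_6$ has degree $27$, this is the same as $\cP_6^{-1}(\bar D)=\cT^o_6$.

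The first step is cohomological, carried out on Beauville's partial compactification $\tcR_6$, over which $\tcP_6$ is proper and $\tcP_6^*\lambda_1=\lambda$. Under the assumption, $\tcP_6^{-1}(\bar D)$ equals $\cT^o_6$ together with (possibly) some boundary divisors of $\tcR_6$, so in $\Pic(\tcR_6)_\bQ$ one has $\tcP_6^*[\bar D]=[\cT^o_6]+\sum_\Delta m_\Delta[\Delta]$ with $m_\Delta\ge 0$, the sum running over boundary divisors, and with $\cT^o_6$ entering with multiplicity exactly $1$ (a general point of $\cT^o_6$ lies off the ramification divisor $\cU_{6,0}$, by \autoref{genfinite}). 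Since $[\bar D]\in\Pic(\cA_5)_\bQ=\bQ\lambda_1$, the left-hand side is a multiple of $\lambda$; comparing the coefficient of $\delta_0''$ and using that by \autoref{class} this coefficient equals $-2^6\ne 0$ for $[\cT^o_6]$ while it vanishes for $\lambda$, $\delta_0^{ram}$ and every $\delta_i$, we are forced to have $m_{\Delta_0''}=2^6>0$; hence $\Delta_0''$ is a component of $\tcP_6^{-1}(\bar D)$, i.e.\ $\tcP_6(\Delta_0'')\subseteq\bar D$. But $\Delta_0''$ parametrizes Wirtinger covers, whose Prym varieties are Jacobians of curves of genus $5$, so $\tcP_6(\Delta_0'')$ is the Jacobian locus $\mathcal J_5\subset\cA_5$. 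Therefore the assumption forces $\mathcal J_5\subseteq\bar D$. (The constants are consistent with the even case, which I would use as a sanity check: $\cT^e_6=\cP_6^{-1}(\theta_{null})$ is $27$-to-$1$ onto $\theta_{null}$, whose class $2^{3}(2^{5}+1)\lambda_1=264\lambda_1$ matches the $\lambda$-coefficient of $[\cT^e_6]$.)

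The second step shows $\mathcal J_5\not\subseteq\bar D$, giving the contradiction. By Recillas' trigonal construction, a trigonal curve $C$ of genus $6$ equipped with \emph{any} $2$-torsion point $\eta$ has Prym variety the Jacobian of a genus-$5$ curve; hence $\cP_6$ maps the locus $\cR_6^{\mathrm{tri}}\subset\cR_6$ of Prym curves with $C$ trigonal into $\mathcal J_5$. If $\mathcal J_5\subseteq\bar D$, then $\cR_6^{\mathrm{tri}}\subseteq\cP_6^{-1}(\bar D)=\cT^o_6\subseteq\pi^{-1}(\cT_6)$, which would force every trigonal curve of genus $6$ to lie on the theta-null divisor $\cT_6\subset\cM_6$; this is false, since the irreducible $13$-dimensional trigonal locus is not contained in the divisor $\cT_6$ (a general trigonal genus-$6$ curve has no vanishing theta-null). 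This contradiction yields $\deg(\cP_6|_{\cT^o_6})<27$, and hence, over a general $A\in\bar D$, not all $27$ points of $\cP_6^{-1}(A)$ lie on $\cT^o_6$, so $\cP_6^{-1}(\bar D)$ has a component other than $\cT^o_6$.

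The delicate point is the bookkeeping in the first step: one must justify that $\tcP_6^*\lambda_1=\lambda$ on $\tcR_6$ (so that $\tcP_6^*[\bar D]$ genuinely has vanishing $\delta_0''$-coefficient), that $\Delta_0''$ is the Wirtinger boundary with $\tcP_6$-image $\mathcal J_5$, and that $\cT^o_6$ occurs with multiplicity $1$ in the pullback — all standard, but needing care. A more hands-on way to exhibit a point of $\cP_6^{-1}(\bar D)\setminus\cT^o_6$, avoiding the Picard-group computation, is via the tetragonal construction: for general $(C_1,\eta_1)\in\cT^o_6$ the proof of the previous proposition produces a genus-$7$ trigonal curve $R$ with a semicanonical pencil $L_R$ such that $h^0(R,L_R\otimes\mu_1)$ is even and $h^0(R,L_R\otimes\mu_2)$ is odd; for $R$ general, $L_R$ is its only semicanonical pencil, so $(R,\mu_2)\notin\cT^e_7=\cP_7^{-1}(\theta_{null})$, whence $JC_2\notin\theta_{null}\subset\cA_6$, i.e.\ $C_2\notin\cT_6$, so the tetragonal partner $(C_2,\eta_2)$ — which has the same Prym as $(C_1,\eta_1)$ — lies in $\cP_6^{-1}(\bar D)$ but not in $\cT^o_6$. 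Either route also shows, by pushing $[\cT^o_6]$ forward along $\tcP_6$, that $\deg(\cP_6|_{\cT^o_6})$ divides $2^8\cdot 27$.
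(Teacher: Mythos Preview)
Your strategy is genuinely different from the paper's. The paper works directly on the compactification $\ocR_6\dasharrow\ocA_5$ (perfect cone): using the explicit formulas $\ocP_*\lambda=18\cdot27L-57D$, $\ocP_*\delta_0^{ram}=4(17\cdot27L-57D)$, $\ocP_*\delta_0'=27D$ from \cite{fgsmv} and $\ocP^*L=\lambda-\tfrac14\delta_0^{ram}$, $\ocP^*D=\delta_0'$ from \cite{gsm}, one computes $\ocP^*\ocP_*[\cT^o_6]=10584\lambda-1320\delta_0'-2646\delta_0^{ram}$. If $\cT^o_6$ were the unique interior divisorial component of the preimage, the $\lambda$-coefficient would force $dm=10584/256\notin\bZ$, a contradiction in one line. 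Your route instead extracts a geometric consequence ($\mathcal J_5\subset\bar D$) from the $\delta_0''$-coefficient and then contradicts it via trigonal covers; this is more roundabout but has the merit of producing an explicit obstruction.

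Two points need repair. First, the assertion $\tcP_6^*\lambda_1=\lambda$ is false: by the formula above, the pullback of the Hodge class is $\lambda-\tfrac14\delta_0^{ram}$. Fortunately this still has vanishing $\delta_0''$-coefficient, so your conclusion $m_{\Delta_0''}>0$ survives; but you should carry out the comparison in $\Pic(\ocR_6)_\bQ$, where the generators are known to be independent, rather than in $\Pic(\tcR_6)_\bQ$, where you have not justified that $\delta_0''$ can be read off unambiguously. Second, the claim that a general trigonal curve of genus $6$ lies outside $\cT_6$ is true but is doing real work and deserves a reference or argument (e.g.\ a Maroni-invariant or class computation), not just an assertion.

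Your alternative via the tetragonal construction is attractive but has a gap of the same flavor: you need that the trigonal genus-$7$ curve $R$ arising from a \emph{general} $(C_1,\eta_1,M_1)$ with $(C_1,\eta_1)\in\cT^o_6$ has a \emph{unique} semicanonical pencil. Since $R$ is constrained to be trigonal and to lie in $\cT_7$, this is not covered by Teixidor's result for the general point of $\cT_7$, and would need its own justification. Finally, the divisibility claim $\deg(\cP_6|_{\cT^o_6})\mid 2^8\cdot27$ does not follow from the pushforward: the class $\ocP_*[\cT^o_6]=10584L-1320D$ only gives $d\mid\gcd(10584,1320)=24$.
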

\begin{proof}
Let us denote by $\ocP:\ocR_6\dasharrow\ocA_5$ the rational Prym map obtained by extending the Prym map to the open subset of $\ocR_6$ lying over the locus of stable curves in $\ocM_6$ with at most one node.
Here $\ocA_5$ stands for the perfect cone compactification of $\cA_5$, whose rational Picard group $\Pic(\ocA_5)_\bQ$ is generated by the Hodge class $L$ and the class $D$ of the irreducible boundary divisor.

According to \cite[Theorem~7.4]{fgsmv}, the pushforwards of the basic divisor classes of $\ocR_6$ are:
\begin{align*}
 \ocP_*\lambda=18\cdot27L-57D,\;\;\ocP_*\delta_0^{ram}=4(17\cdot27L-57D),\;\;\ocP_*\delta_0'=27D,\\
 \ocP_*\delta_0''=\ocP_*\delta_i=\ocP_*\delta_{g-i}=\ocP_*\delta_{i:g-i}=0\;\text{ for $1\leq i\leq[g/2]$}
\end{align*}

On the other hand, the pullback map $\ocP^*:\Pic(\ocA_5)_\bQ\to\Pic(\ocR_6)_\bQ$ satisfies
\[
\ocP^*L=\lambda-\frac{1}{4}\delta_0^{ram},\qquad \ocP^*D=\delta_0'
\]
(see \cite[Theorem~5]{gsm}).
The boundary divisors $\delta_0'',\delta_i,\delta_{g-i},\delta_{i:g-i}$ do not appear since they are contracted by $\ocP$.

Using the cohomology class $[\cT^o_6]\in\Pic(\ocR_6)_\bQ$ given by \autoref{class}, we have
\[
\ocP_*[\cT^o_6]=10584L-1320D
\]
Observe that this class equals $d\cdot[\cP_6(\cT^o_6)]$, where $d=\deg(\cP_6\,|_{\cT_6^o})$ and $[\cP_6^{}(\cT^o_6)]\in\Pic(\ocA_5)_\bQ$ is the class of (the closure in $\ocA_5$ of) $\cP_6(\cT^o_6)$.
Pulling back we obtain
\[
\ocP^*\ocP_*[\cT^o_6]=10584\lambda-1320\delta_0'-2646\delta_0^{ram}
\]
Since these coefficients are not proportional to the corresponding ones in $[\cT^o_6]$, it follows that $\cT^o_6$ cannot be the unique divisorial component of $\cP_6^{-1}(\cP_6^{}(\cT^o_6))$.
\end{proof}

\subsection{$\cT^o_6$ and singular surfaces of twice the minimal class}\label{subsec:singV2} 
In this final subsection we give the first steps towards an intrinsic description of the locus $\cP_6(\cT^o_6)$ in $\cA_5$, with the help of Brill-Noether loci on Prym varieties.
In order to be consistent with the notation in the proof of \autoref{genfinite}, we denote by $\cU_{6,0}\subset\cR_6$ the ramification divisor of $\cP_6$.

Recall that the Andreotti-Mayer locus $\cN_0$ in $\cA_5$ is the union of two irreducible divisors $\theta_{null}$ and $\cN_0'$.
The theta divisor of a general element of $\theta_{null}$ has a unique singular point (which is $2$-torsion), whereas the theta divisor of a general element of $\cN_0'$ has exactly two singular (opposite) points.

The relation between $\cP_6$ and the component $\cN_0'$ of the Andreotti-Mayer locus in $\cA_5$ is described in \cite[Sections~6 and 7]{fgsmv}.
In particular, the following statements hold:

\begin{enumerate}[(1)]
 \item The divisor $\cN_0'\subset\cA_5$ is the branch divisor of $\cP_6$ (see also \cite[pages 93 and 97]{do_fibres}).
 \item The preimage $\cP_6^{-1}(\cN_0')$ has two divisorial components: the ramification divisor $\cU_{6,0}$ and an \emph{antiramification divisor} $\cU$. As cycles, there is an equality
 \[
 \cP_6^*\cN_0'=2\cU_{6,0}+\cU.
 \]
 \item $\cU_{6,0}$ is the set of $(C,\eta)\in\cR_6$ for which $V^3(C,\eta)\neq\emptyset$ (i.e., the theta divisor of $P(C,\eta)$ has a stable singularity), and is mapped six-to-one to $\cN_0'$ (see \cite[Corollary~2.3]{do_tetr}).
 \item $\cU=\pi^*(\GPetri)$ is the pullback to $\cR_6$ of the Gieseker-Petri locus
 \[
 \GPetri=\set{C\in\cM_6\mid \exists L\in W^1_4(C)\text{ such that the Petri map $\mu_{0,L}$ is not injective}}
 \]
 and is mapped fifteen-to-one to $\cN_0'$.
\end{enumerate}

As usual, for $(C,\eta)\in\cR_6$ let us denote by $f:\widetilde{C}\to C$ the corresponding double \'etale cover, and by $\sigma:\widetilde{C}\to\widetilde{C}$ the involution exchanging sheets.

\begin{prop}\label{singV2}
If $(C,\eta)\in\cR_6$ is a non-hyperelliptic Prym curve with $V^4(C,\eta)=\emptyset$, then $V^2(C,\eta)$ is singular if and only if $(C,\eta)\in\cU\cup\cT^o_6$.
\end{prop}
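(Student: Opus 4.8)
\emph{Proof proposal.}
Since $g=6$ and $V^4(C,\eta)=\emptyset$, the scheme $V^2(C,\eta)$ has pure dimension $2$ and $V^2\setminus V^4=V^2$; moreover every $M\in V^2(C,\eta)$ has $h^0(\widetilde C,M)=3$ (it lies in $P^-$, so $h^0$ is odd, and cannot be $\geq5$). Thus \autoref{V2} applies: $V^2(C,\eta)$ is singular if and only if it contains an \emph{exceptional} line bundle, i.e. one of the form $M=f^*L\otimes A$ with $h^0(C,L)\geq2$ and $A$ effective (so $\Nm_f(M)=\omega_C$ and $\Nm_f(A)=\omega_C\otimes L^{-2}$ is effective). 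The plan is to show that the existence of such an $M$ is equivalent to $(C,\eta)\in\cU\cup\cT^o_6$, by a degree analysis in one direction and an explicit construction in the other.

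For the ``only if'' direction, write $B:=\Nm_f(A)$, effective of degree $10-2\deg L\geq0$, so $\deg L\leq5$; since $h^0(C,L)\geq2$ and $C$ is non-hyperelliptic, also $\deg L\geq3$. If $\deg L=5$ then $A=\cO_{\widetilde C}$, $M=f^*L$ and $L^{\otimes2}=\omega_C$, so $L$ is a theta-characteristic; Clifford forces $h^0(C,L)=2$, hence $L$ is a semicanonical pencil, and $3=h^0(\widetilde C,f^*L)=h^0(C,L)+h^0(C,L\otimes\eta)$ forces $h^0(C,L\otimes\eta)$ odd, so $L\otimes\eta$ is odd and $(C,\eta)\in\cT^o_6$. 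If $\deg L=4$ then $L\in W^1_4(C)$ and $\omega_C\otimes L^{-2}=\cO_C(B)$ is effective, so $h^0(\omega_C\otimes L^{-2})>0$; by the base-point-free pencil trick (applied to $L$, or to its moving part if $L$ has base points) $\mu_{0,L}$ is not injective, hence $C\in\GPetri$ and $(C,\eta)\in\cU$. If $\deg L=3$ then $L$ is a base-point-free $g^1_3$ (a base point would make $C$ hyperelliptic), so $C$ is trigonal; then $h^0(\omega_C\otimes L^{-2})=h^1(L^{\otimes2})\geq h^0(L^{\otimes2})-1\geq2$ (two copies of a fibre of the triple cover move in a $\mathbb P^2$), so for a general point $p$ the $g^1_4$ $L(p)$ has $\dim\ker\mu_{0,L(p)}=h^0(\omega_C\otimes L^{-2}(-p))\geq1$, whence again $C\in\GPetri$ and $(C,\eta)\in\cU$.

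For the ``if'' direction, suppose first $(C,\eta)\in\cT^o_6$, with semicanonical pencil $L$ such that $h^0(C,L\otimes\eta)$ is odd. Clifford gives $h^0(C,L)=2$, so $h^0(\widetilde C,f^*L)=2+h^0(C,L\otimes\eta)$ is odd and $\geq3$, and equals $3$ since $V^4(C,\eta)=\emptyset$; thus $f^*L\in V^2(C,\eta)$ is exceptional, hence a singular point by \autoref{V2}. Now suppose $(C,\eta)\in\cU$, i.e. $C\in\GPetri$; pick $L\in W^1_4(C)$ with $\mu_{0,L}$ not injective (base-point-free, by the description of tetragonal series with base points). Then $\omega_C\otimes L^{-2}$ is effective; write $\omega_C\otimes L^{-2}=\cO_C(b_1+b_2)$ and set $M=f^*L\otimes\cO_{\widetilde C}(\widetilde b_1+\widetilde b_2)$ for lifts $\widetilde b_i$ of $b_i$ along $f$. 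One checks $\Nm_f(M)=\omega_C$ automatically, and the crux is that, \emph{for a suitable choice of the two lifts}, $h^0(\widetilde C,M)=3$, so that $M\in V^2(C,\eta)$ is exceptional and $V^2(C,\eta)$ is singular. Here the key inputs are the ``unexpected'' identities $\omega_C\otimes L^{-1}=L\otimes\cO_C(b_1+b_2)$ and $(\omega_C\otimes L^{-1})^{\otimes2}=\omega_C\otimes\cO_C(b_1+b_2)$, which say that $b_1+b_2$ imposes only one condition on each of $|\omega_C\otimes L^{-1}|$ and $|(\omega_C\otimes L^{-1})^{\otimes2}|$ (because $\omega_C\otimes L^{-1}-b_1-b_2=L$ and $(\omega_C\otimes L^{-1})^{\otimes2}-b_1-b_2=\omega_C$ are non-special); feeding these into the decomposition $H^0(\widetilde C,f^*(\omega_C\otimes L^{-1}))=f^*H^0(C,\omega_C\otimes L^{-1})\oplus H^0(C,(\omega_C\otimes L^{-1})\otimes\eta)$ and computing $h^1(M)=h^0(\widetilde C,f^*(\omega_C\otimes L^{-1})(-\widetilde b_1-\widetilde b_2))$ produces the required section. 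I expect this last computation — and in particular keeping track of how the sections of $M$ on $\widetilde C$ depend on the choice of lifts, i.e. on the $2$-torsion — to be the main obstacle; the trigonal and plane-quintic sub-cases of $\cU$ should be checked to be consistent with (or subsumed by) this construction.
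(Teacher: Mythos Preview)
Your overall strategy coincides with the paper's: apply \autoref{V2}, then analyze the exceptional presentation $M=f^*L\otimes A$ by the degree of $L$. There are, however, two genuine issues.

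\textbf{The $d=5$ case.} Your claim that ``Clifford forces $h^0(C,L)=2$'' is false: for a degree-$5$ theta-characteristic Clifford only gives $h^0(C,L)\leq 3$, and $h^0(C,L)=3$ occurs exactly when $C$ is a smooth plane quintic. In that situation $h^0(C,L\otimes\eta)=0$, so neither $L$ nor $L\otimes\eta$ is a semicanonical pencil and you do \emph{not} land in $\cT^o_6$. The paper avoids this by organizing the argument differently: it first disposes of all $(C,\eta)$ admitting some $L'\in W^1_4(C)$ with $\omega_C\otimes L'^{-2}$ effective (which includes plane quintics, via $L'=\cO_C(1)(-p)$), and only then treats $d=5$ under that standing assumption, which forces $h^0(C,L)=2$.

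\textbf{The ``if'' direction for $\cU$.} This is the real gap. Your proposed route---decompose $H^0(\tC,f^*(\omega_C\otimes L^{-1}))$ into $\pm$-eigenspaces and then impose the two point conditions at $\widetilde b_1,\widetilde b_2$---does not see the dependence on the choice of lifts, because $f^*(\omega_C\otimes L^{-1})(-\widetilde b_1-\widetilde b_2)$ is not itself a pullback and the eigenspace splitting is lost. The paper dispatches the whole step in one line using Mumford's parity trick: if $B:=\omega_C\otimes L^{-2}$ is effective of positive degree, then as $A$ runs over effective divisors on $\tC$ with $\Nm_f(A)=B$, the line bundles $f^*L\otimes A$ hit both components $P^+$ and $P^-$ of $\Nm_f^{-1}(\omega_C)$ (swapping a single lift $\widetilde b_i\leftrightarrow\sigma(\widetilde b_i)$ switches the component). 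Choose $A$ so that $f^*L\otimes A\in P^-$; then $h^0$ is odd and at least $h^0(\tC,f^*L)\geq 2$, hence $\geq 3$, and the hypothesis $V^4(C,\eta)=\emptyset$ forces $h^0=3$. This replaces your ``main obstacle'' entirely, and also makes the separate treatment of the trigonal subcase unnecessary for this implication.
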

\begin{proof}
According to \autoref{V2}, singular points $M\in V^2(C,\eta)$ are exactly those of the form
\[
M=f^*L\otimes A
\]
with $h^0(C,L)\geq2$ and $h^0(\tC,A)>0$.
In order to prove the statement, we distinguish the possible values of $d=\deg L$ allowing the existence of such an $M$.

For $d\leq4$, this condition is equivalent to the existence of $L\in W^1_d(C)$ and $A$ effective satisfying $h^0(\tC,f^*L\otimes A)=3$ and $\omega_C=L^2\otimes\Nm_f(A)$.
This happens if and only if there exists $L\in W^1_d(C)$ with $\omega_C\otimes L^{-2}$  effective: the ``only if'' part being clear, if $\omega_C\otimes L^{-2}$ is effective then Mumford's parity trick (\cite[bottom of page~186]{mu2}) allows us to find $A$ effective with $\Nm_f(A)=\omega_C\otimes L^{-2}$ and $h^0(\tC,f^*L\otimes A)=3$.

If there exists $L\in W^1_3(C)$, then one immediately checks that $\omega_C\otimes L^{-2}$ is effective.
Moreover, take $x\in C$ such that $\omega_C\otimes L^{-2}(-x)$ is effective. Since $\Cliff(C)\geq1$ by assumption, one has $h^0(C,L(x))=2$ and the kernel of the Petri map
\[
\mu_{0,L(x)}:H^0(C,L(x))\otimes H^0(C,\omega_C\otimes L^{-1}(-x))\longrightarrow H^0(C,\omega_C)
\]
is $\ker(\mu_{0,L(x)})\cong H^0(C,\omega_C\otimes L^{-2}(-x))\neq0$ by the base-point-free pencil trick (\cite[page~126]{acgh}).
In other words, the two statements $\Sing V^2(C,\eta)\neq\emptyset$ and $(C,\eta)\in\cU$ hold whenever $C$ is trigonal.

Now assume that $C$ is not trigonal.
We claim that the existence of $L\in W^1_4(C)$ with $\omega_C\otimes L^{-2}$ effective is equivalent to $C\in\GPetri$, namely to $(C,\eta)\in\cU$.
Indeed, if one can write $\omega_C=L^2(a+b)$ for points $x,y\in C$, then $\omega_C\otimes L^{-1}=L(x+y)$ and thus the Petri map $\mu_{0,L}$
fails to be injective.
Conversely, if the Petri map
 \[
 \mu_{0,L}:H^0(C,L)\otimes H^0(C,\omega_C\otimes L^{-1})\longrightarrow H^0(C,\omega_C)
 \]
of a certain $L\in W^1_4(C)$ has nonzero kernel, then the line bundle $\omega_C\otimes L^{-2}$ is effective since $\ker(\mu_{0,L})\cong H^0(C,\omega_C\otimes L^{-2})$ by the base-point-free pencil trick.

Now we can assume that there exists no $L\in W^1_4(C)$ with $\omega_C\otimes L^{-2}$ effective.
It only remains to check the case $d=5$: the condition reads $f^*L\in V^2(C,\eta)$, which is equivalent to the existence of a theta-characteristic $L$ on $C$ with
\[
h^0(C,L)+h^0(C,L\otimes\eta)=3.
\]
Under our assumption on $W^1_4(C)$, this is equivalent to either $L$ or $L\otimes\eta$ being an odd semicanonical pencil for the cover $f$, namely to $(C,\eta)\in\cT^o_6$.
\end{proof}

\begin{rem}\label{twosing}
Let us comment first on the hypothesis $V^4(C,\eta)=\emptyset$. 
\begin{enumerate}[{\rm (1)}]
 \item If $V^4(C,\eta)\neq\emptyset$, then $V^2(C,\eta)$ is automatically singular at points $M\in V^4(C,\eta)$ (this is an immediate application of \cite[Proposition~IV.4.2]{acgh}).
 
 \noindent On the other hand, for $M\in V^4(C,\eta)$ one immediately deduces from Mumford's parity trick that $M(x-\sigma(x))\in V^3(C,\eta)$ for every $x\in\widetilde{C}$. 
 As a consequence, $V^3(C,\eta)$ is at least $1$-dimensional whenever $V^4(C,\eta)\neq\emptyset$ (in particular, $(C,\eta)\in\cU_{6,0}$).
 
 \item Let $C\in\GPetri$ be general, so that there is a unique $L\in W^1_4(C)$ and unique $x,y\in C$ satisfying $L^2(x+y)=\omega_C$.
 If $\widetilde{x},\sigma(\widetilde{x})$ (resp. $\widetilde{y},\sigma(\widetilde{y})$) are the two points of $\tC$ lying over $x$ (resp. over $y$), then the four candidates for a singularity of $V^2(C,\eta)$ are:
 \[
 f^*L(\widetilde{a}+\widetilde{b}),\;\;f^*L(\sigma(\widetilde{a})+\widetilde{b}),\;\;f^*L(\widetilde{a}+\sigma(\widetilde{b})),\;\;f^*L(\sigma(\widetilde{a})+\sigma(\widetilde{b})).
 \]

 \noindent By Mumford's parity trick, these line bundles can be divided into two pairs (namely $f^*L(\widetilde{a}+\widetilde{b}),f^*L(\sigma(\widetilde{a})+\sigma(\widetilde{b}))$ and $f^*L(\sigma(\widetilde{a})+\widetilde{b}),f^*L(\widetilde{a}+\sigma(\widetilde{b}))$) according to the component of $\Nm^{-1}(\omega_C)=P^+\cup P^-$ in which they live.
 
 \noindent Since $(C,\eta)\notin\cU_{6,0}$ by genericity (and thus $V^3(C,\eta)=\emptyset$), it follows that two of them satisfy $h^0=2$ and the other two satisfy $h^0=3$.
 In other words, for a general $(C,\eta)\in\cU$ the theta divisor $\Xi^+$ has two exceptional singularities (hence, $P(C,\eta)\in\cN_0'$) and the Brill-Noether locus $V^2(C,\eta)$ has two singular points.
\end{enumerate}
\end{rem}

Let us recall that, for a non-hyperelliptic Prym curve $(C,\eta)=(\tC,C)\in\cR_g$, the Brill-Noether locus $V^2(C,\eta)$ (when properly translated to $P(C,\eta)$) is a subvariety of twice the minimal class (see \cite[Theorem~9]{decprag} and \cite[Corollary~4.4]{ln}).
Moreover $V^2(C,\eta)$ is symmetric in $P(C,\eta)$, if the translation is performed with a theta-characteristic of $\tC$ lying in $P^-$.

Combining this observation with \autoref{singV2} and \autoref{twosing}, it is tempting to propose the following analogue of the decomposition of the Andreotti-Mayer locus:

\begin{quest}\label{conjA5}
Let $\cV\subset\cA_5$ be (the closure of) the locus of ppav $(A,\Theta)$ containing an integral surface $S$ with the following properties:
\begin{enumerate}[{\rm (1)}]
 \item The cohomology class of $S$ is twice the minimal class: $[S]=2\frac{[\Theta]^3}{6}$ in $H^6(A,\bZ)$.
 \item $S$ is symmetric.
 \item $S$ has singular points.
\end{enumerate}
Does $\cV$ decompose as the union of two irreducible divisors?
Namely, the closure of $\cP_6(\cT_6^o)$ (whose general element contains at least one such surface $S$ with a unique singular point, which is 2-torsion) and $\cN_0'$ (whose general element contains fifteen such surfaces $S$, with two singular opposite points each).
\end{quest}

\vspace{1.5mm}

\bibliography{ref}
\bibliographystyle{alphaspecial}

\end{document}